\documentclass[reqno,12pt]{amsart}
\usepackage[colorlinks=true, linkcolor=blue, citecolor=blue]{hyperref}
\usepackage{amssymb}
\usepackage{amsmath, graphicx, rotating}
\usepackage{color}
\usepackage{soul}
\usepackage[dvipsnames]{xcolor}

\usepackage{amsmath,amssymb,amsfonts,amsthm,amscd,amsxtra}
\usepackage{mathrsfs,eucal,dsfont}
\allowdisplaybreaks

\usepackage{ifthen}
\usepackage{xkeyval}
\usepackage{todonotes}
\setlength{\marginparwidth}{2cm}

\usepackage[T1]{fontenc}
\usepackage{lmodern}
\usepackage[english]{babel}

\usepackage{ upgreek }
\usepackage{stmaryrd}
\SetSymbolFont{stmry}{bold}{U}{stmry}{m}{n}
\usepackage{amsthm}
\usepackage{float}

\usepackage{ bbm }
\usepackage{ stmaryrd }
\usepackage{ mathrsfs }
\usepackage{ frcursive }
\usepackage{ comment }

\usepackage{pgf, tikz}
\usetikzlibrary{shapes}
\usepackage{varioref}
\usepackage{enumitem}

\setcounter{MaxMatrixCols}{10}
\definecolor{rouge}{rgb}{0.7,0.00,0.00}
\definecolor{vert}{rgb}{0.00,0.5,0.00}
\definecolor{bleu}{rgb}{0.00,0.00,0.8}
\usepackage[margin=1in]{geometry}
\newtheorem{theorem}{Theorem}[section]
\newtheorem*{theorem*}{Theorem}
\newtheorem{lemma}[theorem]{Lemma}

\newtheorem{proposition}[theorem]{Proposition}

\labelformat{hypothesis}{\textbf{M\kern-0.1mm#1}}

\newtheorem{remark}[theorem]{Remark}
\newtheorem{condition}{Condition}

\theoremstyle{definition}
\numberwithin{equation}{section}

\newcommand{\I}{\mathds 1}

\labelformat{conditionA}{\textbf{A\kern-0.1mm#1}}

\def\bf#1{\mathbf{#1}}
\def\scr#1{\mathscr{#1}}

\def\tt#1{\tilde{#1}}

\def\d{{\rm d}}

\def\a{{\rm ~and~}}

\def\8{{\infty}}

\def\R{\mathbb R}
\def\F{\mathbb F}
\def\P{\mathbb{P}}
\def\E{\mathbb{E}}

\def\W{{\mathbb W}}

\def\<{\langle}
\def\>{\rangle}
\def\wt{\widetilde}

\def\be{{\beta}}

\def\Om{{\Omega}}

\def\al{{\alpha}}

\def\be{{\beta}}

\def\Ga{{\Gamma}}
\def\ga{{\gamma}}

\def\si{{\sigma}}

\def\vv{{\varepsilon}}
\def\la{\langle}
\def\ra{\rangle}

\def\nn{\nabla}

\DeclareMathOperator{\e}{e}

\begin{document}
\allowdisplaybreaks
\title[Well-posedness of L\'evy-driven McKean-Vlasov SDEs] {A note on   L\'evy-driven McKean-Vlasov SDEs under monotonicity}

\author{
Jianhai Bao \qquad Yao Liu \qquad
Jian Wang}
\date{}
\thanks{\emph{J.\ Bao:} Center for Applied Mathematics, Tianjin University, 300072  Tianjin, P.R. China. \url{jianhaibao@tju.edu.cn}}

\thanks{\emph{Y. \ Liu:} School  of Mathematics and Statistics, Fujian Normal University, 350007 Fuzhou, P.R. China. \url{liuyaomath@163.com}}

\thanks{\emph{J.\ Wang:}
School  of Mathematics and Statistics \& Key Laboratory of Analytical Mathematics and Applications (Ministry of Education) \& Fujian Provincial Key Laboratory
of Statistics and Artificial Intelligence, Fujian Normal University, 350007 Fuzhou, P.R. China. \url{jianwang@fjnu.edu.cn}}

\date{}

\maketitle

\begin{abstract}
In this note, under a weak  monotonicity and a weak coercivity,
we address strong well-posedness of McKean-Vlasov stochastic differential equations (SDEs) driven by L\'{e}vy jump processes, where the coefficients are Lipschitz continuous (with respect to the measure variable) under the $L^\beta$-Wasserstein distance for $\beta\in[1,2].$ Moreover, the issue on the weak propagation of chaos (i.e., convergence in distribution via the convergence of the empirical measure) and the strong propagation of chaos (i.e., at the level paths by coupling)
is explored simultaneously.  To treat the strong well-posedness of McKean-Vlasov SDEs we are interested in,  we  investigate   strong well-posedness of classical time-inhomogeneous SDEs with jumps under a local weak monotonicity and a global weak coercivity. Such a result is of independent interest, and, most importantly, can provide  an available  reference on strong well-posedness of L\'{e}vy-driven SDEs under the monotone condition, which nevertheless is missing  for a long time. Based on the  theory derived, along with the interlacing technique and the Banach fixed point theorem, the strong well-posedness of McKean-Vlasov SDEs driven by L\'{e}vy jump processes can be established. Additionally, as a potential extension, strong well-posedness and conditional  propagation of chaos are treated for L\'{e}vy-driven McKean-Vlasov SDEs with common noise
under a  weak monotonicity.

\medskip

\noindent\textbf{Keywords:} McKean-Vlasov SDE; L\'evy process; weak monotonicity;  weak coercivity;
 well-posedness; propagation of chaos
\medskip

\noindent \textbf{MSC 2020:} 60G51; 60J25; 60J76.
\end{abstract}
\allowdisplaybreaks

\section{Introduction and main results} \label{Sec Main Results}

\subsection{Background}
The treatment of strong/weak well-posedness is a  starter to explore  qualitative/quantitative studies of stochastic differential equations (SDEs for short) under consideration.
In the past few decades,  strong well-posedness of SDEs with Brownian motion noises has been investigated under various scenarios; see, for instance,
\cite{IW,Mao} for the Lipschitz continuity and the linear growth; \cite[Theorem 3.4]{Mao} concerning the local Lipschitz condition plus the linear growth;
\cite[Theorem 3.5]{Mao} regarding the local Lipschitz continuity along with the Lyapunov condition, and \cite[Theorem 3.1.1]{PR} with regard to the  local weak monotonicity besides the global weak coercivity. Moreover, under the local $L^q(L^p)$-condition,
strong well-posedness of singular SDEs has  also advanced greatly via  Zvonkin's transformation; see e.g. \cite{XXZZ,XZ}  and references therein.

At the same time, there is a considerable amount of  literature concerned with  strong well-posedness of SDEs driven by L\'evy jump process.
As we know, under the standard assumption that coefficients are globally Lipschitz and of linear growth,  SDEs with pure jumps  are strongly well-posed; see, for example,  \cite{AD,IW}. In  case that drifts and Brownian diffusions   are locally Lipschitz and the jump coefficient is globally Lipschitz, in addition to  a weak coercivity, strong well-posedness of  SDEs with jumps was explored in \cite{ABW}. With contrast to SDEs with Brownian motion noises, strong well-posedness of SDEs driven by pure jump processes is rare under the  weak monotonicity (which is also termed as the one-sided Lipschitz condition) and the weak coercivity. As stated in \cite{MM},   many authors quote strong well-posedness of SDEs with jumps under
the one-sided Lipschitz condition by {\it claiming that it is totally  well-known nevertheless  without providing any reference
or referring to references which do not contain it at all}. This phenomenon is further stressed in \cite{MSSZ} as follows: ``However, we could not find a reference in
the literature that covers our setting completely.'' Based on the point of view above,  via a truncation approach, \cite{MM} addressed  strong well-posedness of SDEs driven by Brownian motions and  compensated Poisson random measures, where
  a  local  weak monotonicity and a global  weak coercivity were imposed. Unsatisfactorily, due to the limitation of the method adopted,  the local  weak monotonicity and the global  weak coercivity put in \cite{MM} cannot go back to the classical one
   (see e.g. \cite[(3.1.3) and (3.1.4)]{PR} for more details) when  the pure jump term involved vanishes. Additionally,
we would like to mention \cite{GK}  for a much more general setup, where the driven noise is a square-integrable  semimartingale.

In recent years, there are great progresses  as well on strong well-posedness of McKean-Vlasov SDEs driven by Brownian motions;   see e.g.     monographs \cite{CDa,MS,WR}. We also would like to mention that  \cite{CDS} explored  strong well-posedness of McKean-Vlasov SDEs, which allow   drifts and diffusions to be of super-linear growth in measure and state  variables. Meanwhile, strong well-posedness of regular McKean-Vlasov SDEs with jumps
has also attracted a lot of interest; see e.g. \cite{DH,EX,LMW,MSSZ}.  In detail,  \cite{DH,LMW} is concerned with  the additive noise and the drift involved in \cite{EX} is of linear growth with respect to the state  variable. Furthermore,
strong well-posedness of McKean-Vlasov SDEs with singular interaction kernels and symmetric $\alpha$-stable noises has been tackled  in \cite{DHb,DHc} and \cite{FKM,HRW} by the aid of
the (two-step) fixed point theorem    and   the non-linear martingale problem, respectively.  Additionally, via a   Fourier-based Picard-iteration approach, \cite{AP} considered strong well-posedness of a class of
McKean-Vlasov SDEs with L\'{e}vy jumps, where the underlying drift coefficient is affine in the state variable.  When the coefficients are Lipschitz continuous with respect to the measure variable under the $L^2$-Wasserstein distance and
non-globally Lipschitz continuous with respect to the state variable,
 strong well-posedness of  L\'{e}vy-driven McKean-Vlasov SDEs
   has been treated in \cite{MSSZ,TKLN}. In   case that the associated coefficients are $L^\beta$-Wasserstein continuous (for $1\le \beta\le 2$) as far as the measure variable  is concerned, and Lipschitz continuous in the state variable,
   the paper  \cite{CT}  probed into   well-posedness of L\'{e}vy-driven McKean-Vlasov SDEs.

Inspired by the aforementioned literature, in this note we aim to investigate the strong well-posedness of a class of L\'{e}vy-driven McKean-Vlasov SDEs under a weak monotonicity and a weak coercivity, which will weaken the associated conditions and
 improve  the corresponding results in e.g. \cite{AP,CT,DH,MSSZ,TKLN} in various  aspects.

\subsection{Well-posedness of  McKean-Vlasov SDEs}
More precisely,  in this note we focus on  the following   McKean-Vlasov SDE on $\R^d$:
\begin{equation} \label{MV}
\d X_t=b(X_t,\scr{L}_{X_t})\,\d t+\int_U f(X_{t-},\scr{L}_{X_t},z)\,\wt N(\d t,\d z)+\int_V g(X_{t-},\scr{L}_{X_t},z)\,N(\d t,\d z).
\end{equation}
Herein, $\scr{L}_{X_t}$ stands for the law of $X_t$; $X_{t-}:=\lim_{s\uparrow t}X(s)$;
$
b:\R^{d}\times\scr{P}(\R^{d})\to\R^{d}$, and
$f,g:\R^{d}\times\scr{P}(\R^{d})\times \R^d\to\R^{d}
$
are measurable maps, where $\scr{P}(\R^d)$ means the family of probability measures on $\R^d$;  $U, V\subset \R^d_0:=\R^d\backslash \{{\bf0}\}$ so that $U\cap V=\emptyset$;
 $N$ is a Poisson random measure, carried on a complete probability space $(\Omega,\mathscr F,\P)$, with the intensity measure $\d t\times \nu(\d z)$ for a $\sigma$-finite measure $\nu(\d z)$, and $\wt N(\d t,\d z):=N(\d t,\d z)-
 \d t\nu(\d z)$ represents the associated compensated Poisson measure. Furthermore, we shall assume that
 for some $\beta\in [1,2]$  and any
 fixed  $x\in \R^d$ and $\mu\in \scr{P}(\R^d)$,
 \begin{equation}\label{e:welldefined}\nu(|f(x,\mu,\cdot)|^2\I_U(\cdot))+\nu((1
 \vee |\cdot|^\beta
 \vee|g(x,\mu,\cdot)|^\beta)\I_V(\cdot))<\8,
 \end{equation}
 where $\nu(f):=\int_{\R^d}f(x)\nu(\d x)$  for a $\nu$-integrable function $f:\R^d\to\R$.
 For  $p>0,$ denote by $\W_p$ the
 $L^p$-Wasserstein distance:
 $$
\W_p(\mu_1,\mu_2):=\inf_{\pi\in\mathscr{C}(\mu_1,\mu_2)}\left(\int_{\R^d\times\R^d}|x-y|^p\,\pi(\d x,\d y)\right)^{\frac{1}{1\vee p}},\quad \mu_1,\mu_2\in\mathscr P_p(\R^d),
$$
where $ \mathscr P_p(\R^d):=\{\mu\in\mathscr P(\R^d): \mu(|\cdot|^p)<\8\}$ and $\mathscr C(\mu_1,\mu_2)$ is the set of  couplings for  $\mu_1$ and $\mu_2$.

To guarantee the well-posedness of the SDE \eqref{MV}, the following assumptions are in force.

\begin{enumerate}\it
\item [{\rm(${\bf A}_1$)}]
for  fixed $\mu\in\scr{P}_\be(\R^d)$ and $z\in\R^d$,   $\R^d \ni  x \mapsto b(x,\mu)$ and $\R^d\ni x\mapsto f(x,\mu,z)$  are continuous and locally bounded, and there exists a constant $L_1>0$ such that
for any   $x,y,z\in\R^d$,
 and $\mu_1,\mu_2\in\scr{P}_\be(\R^d)$,
\begin{equation}\label{EQ}
\begin{split}
& 2\langle b(x,\mu_1)-b(y,\mu_2),x-y\rangle+  \nu(|f(x,\mu_1,\cdot)-f(y,\mu_2,\cdot)|^2\I_U(\cdot)) \\
&\le L_1(|x-y|^2+\W_\be(\mu_1,\mu_2)^2),
\end{split}
\end{equation}
and
\begin{align}\label{EQ*}
|g(x,\mu_1,z)-g(y,\mu_2,z)| \le L_1(1+|z|)(|x-y| +\W_\be(\mu_1,\mu_2) );
\end{align}

  \item [{\rm(${\bf A}_2$)}] there exists a constant $L_2>0$ such that for any $x\in\R^d$ and  $\mu\in\scr{P}_\be(\R^d)$,
\begin{equation*}
2\<x,b(x,\mu)\>
 +
 \nu(|f(x,\mu,\cdot)|^2\I_U(\cdot))
 \le L_2 (1+|x|^2+\mu(|\cdot|^\be)^{\frac{2}{\be}});
\end{equation*}

  \item [{\rm(${\bf A}_3$)}]for any $T,R>0$ and $\mu\in C([0,T];\mathscr P_\beta(\R^d))$,
\begin{align*}
\int_0^T \Big(\sup_{\{|x|\le R\}}|b(x,\mu_t)| +\int_U\sup_{\{|x|\le R\}}  |f (x,\mu_t,z)|^2\nu(\d z) \Big)\,\d t<\8.
\end{align*}
\end{enumerate}

The first main result in this paper is stated as follows.

\begin{theorem}\label{theorem2}
Assume that Assumptions $({\bf A}_1)$-$({\bf A}_{3})$   hold, and suppose further  $X_0\in L^\beta(\Omega\to\R^d,\mathscr F_0,\P)$.
Then, the McKean-Vlasov  SDE \eqref{MV} admits a unique strong solution $(X_t)_{t\ge0}$ satisfying that,
for any fixed $T>0$, there exists a constant $C_T>0$ such that
\begin{align}\label{B0}
\E|X_t|^\beta \le C_T(1+\E|X_0|^\be),\quad 0\le t\le T.
\end{align}
In addition, if Assumption
 $({\bf A}_2)$ is replaced by the following stronger  one: for some    $L_3>0,$
 \begin{equation}\label{A21}
 \<x,b(x,\mu)\> \vee
 \nu( |f(x,\mu,\cdot)|^2\I_U(\cdot))
 \le L_3 \big(1+|x|^2+\mu(|\cdot|^\be)^{\frac{2}{\beta}}\big ),
\end{equation}
then, for any $T>0$, there exists a constant $C_T'>0$ such that
\begin{align}\label{EQ-}
\E\Big(\sup_{0\le t\le T}  |X_t|^\be\Big)   \le C_T'(1+\E|X_0|^\be).
\end{align}
\end{theorem}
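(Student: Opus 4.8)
The plan is to freeze the measure argument, solve the resulting classical SDE, and recover the law by a fixed point argument. Fix $T>0$; on the complete metric space $\big(C([0,T];\scr{P}_\beta(\R^d)),\,d\big)$ with $d(\mu,\nu)=\sup_{0\le t\le T}\W_\beta(\mu_t,\nu_t)$, associate to each flow $\mu=(\mu_t)_{t\in[0,T]}$ the time-inhomogeneous SDE
\begin{equation*}
\d Y_t=b(Y_t,\mu_t)\,\d t+\int_U f(Y_{t-},\mu_t,z)\,\wt N(\d t,\d z)+\int_V g(Y_{t-},\mu_t,z)\,N(\d t,\d z),\qquad Y_0=X_0.
\end{equation*}
By $(\mathbf{A}_1)$ (taken at $\mu_1=\mu_2=\mu_t$) this SDE has local weak monotonicity in the state variable and a globally Lipschitz jump coefficient $g$; by $(\mathbf{A}_2)$ it has global weak coercivity, where the term $\mu_t(|\cdot|^\beta)^{2/\beta}$ is, by $\W_\beta$-continuity of $t\mapsto\mu_t$ on $[0,T]$, a bounded function of $t$; and $(\mathbf{A}_3)$ provides the local-in-time integrability of the coefficients. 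Thus the classical well-posedness theorem for jump SDEs under a local weak monotonicity and a global weak coercivity proved earlier in the paper yields a unique strong solution $Y^{\mu}$; since $\nu(V)<\8$ by \eqref{e:welldefined}, the $N$-driven part has finite activity and can be built from the $\wt N$-driven equation by interlacing.

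Next I would record the $L^\beta$-moment bound for the decoupled solution. Applying It\^o's formula to $(1+|Y^\mu_t|^2)^{\beta/2}$ (with a localizing sequence $\tau_R\uparrow\8$ and, for $\beta<2$, the elementary bounds $|\nabla(1+|x|^2)^{\beta/2}|\le\beta(1+|x|^2)^{(\beta-1)/2}$ and $\|\nabla^2(1+|x|^2)^{\beta/2}\|\le C(1+|x|^2)^{\beta/2-1}$), controlling the drift and the small-jump compensator by $(\mathbf{A}_2)$ and the finitely many large jumps by \eqref{EQ*} together with \eqref{e:welldefined}, and then invoking Gronwall's inequality and Fatou's lemma, one obtains
\begin{equation*}
\E|Y^\mu_t|^\beta\le C_T\Big(1+\E|X_0|^\beta+\sup_{0\le s\le T}\mu_s(|\cdot|^\beta)\Big),\qquad 0\le t\le T.
\end{equation*}
Combined with right-continuity of $Y^\mu$ in $L^\beta$ (so that $t\mapsto\scr{L}_{Y^\mu_t}$ is $\W_\beta$-continuous), this shows that $\Phi(\mu):=(\scr{L}_{Y^\mu_t})_{t\in[0,T]}$ is a well-defined map of $C([0,T];\scr{P}_\beta(\R^d))$ into itself.

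For the contraction estimate, given $\mu^1,\mu^2$ I would realize $Y^1:=Y^{\mu^1}$ and $Y^2:=Y^{\mu^2}$ on the same probability space with the same $X_0$ and the same Poisson measure, set $\Delta_t:=Y^1_t-Y^2_t$, and apply It\^o to $(1+|\Delta_t|^2)^{\beta/2}$. Here the drift together with the $U$-compensator is controlled by \eqref{EQ}, the large-jump part is handled by \eqref{EQ*} and $\nu((1+|\cdot|)^\beta\I_V)<\8$, and after taking expectations (the martingales vanishing by localization) one is led — via Young's and Gronwall's inequalities — to
\begin{equation*}
\W_\beta\big(\Phi(\mu^1)_t,\Phi(\mu^2)_t\big)^\beta\le\E|\Delta_t|^\beta\le C\int_0^t\W_\beta(\mu^1_s,\mu^2_s)^\beta\,\d s,\qquad 0\le t\le T.
\end{equation*}
Iterating this bound makes $\Phi^{n}$ a strict contraction for $n$ large, so the Banach fixed point theorem produces a unique fixed point $\mu^{*}$; then $X:=Y^{\mu^{*}}$ solves \eqref{MV} on $[0,T]$, and since $T>0$ is arbitrary one gets the global solution, uniqueness following since the law-flow of any solution must be a fixed point of $\Phi$ and then the decoupled uniqueness applies. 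Finally, specializing the decoupled moment bound to $\mu^{*}$, where $\mu^{*}_t(|\cdot|^\beta)=\E|X_t|^\beta$, yields a closed Gronwall inequality for $t\mapsto\E|X_t|^\beta$ and hence \eqref{B0}.

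For the last assertion \eqref{EQ-} under the stronger coercivity \eqref{A21}, I would again apply It\^o to $(1+|X_t|^2)^{\beta/2}$ for the solution itself: by \eqref{A21} and the bounds above, the drift and both jump-compensator terms are dominated by a constant multiple of $(1+|X_s|^2)^{\beta/2}+\mu^{*}_s(|\cdot|^\beta)^{2/\beta}$, and since \eqref{B0} makes $\sup_{0\le s\le T}\mu^{*}_s(|\cdot|^\beta)$ finite, $\E\int_0^t(1+|X_s|^2)^{\beta/2}\,\d s$ is already under control. It then remains to estimate $\E\sup_{0\le s\le t}$ of the jump-martingale part, and this is the delicate step: one uses the Burkholder--Davis--Gundy inequality, bounds the jumps of $(1+|X_s|^2)^{\beta/2}$ by those of $X_s$, exploits the finite activity of the $N$-part (via the interlacing structure) for the large jumps, and absorbs the small multiple of $\E\sup_{0\le s\le t}(1+|X_s|^2)^{\beta/2}$ thus produced into the left-hand side — either directly or after partitioning $[0,T]$ into finitely many short intervals and restarting from \eqref{B0} at each step. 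I expect this running-maximum estimate for the jump martingale, and more generally the interplay between the $L^2$-type coercivity and the $L^\beta$-moment bookkeeping when $\beta\in[1,2)$, to be the main technical hurdle; the remaining ingredients — It\^o's formula, Gronwall's inequality, the interlacing technique and the Banach fixed point theorem — are standard.
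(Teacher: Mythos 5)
Your overall architecture (decouple, solve the frozen-measure SDE by the classical monotone result, then a fixed point in measure-flow space) is the same as the paper's, but your contraction step contains a genuine gap, and it is exactly the one the paper singles out in Remark \ref{Add:re1}(i). Assumption \eqref{EQ} controls the drift plus small-jump compensator by $L_1\big(|x-y|^2+\W_\be(\mu_1,\mu_2)^2\big)$, with the Wasserstein distance entering \emph{squared}. Applying It\^o's formula to $(1+|\Delta_t|^2)^{\be/2}$ therefore produces the term $(1+|\Delta_t|^2)^{\frac{\be}{2}-1}\W_\be(\mu^1_t,\mu^2_t)^2$, which for $\be\in[1,2)$ cannot be dominated by a constant multiple of $|\Delta_t|^\be+\W_\be(\mu^1_t,\mu^2_t)^\be$ (the obstruction is precisely when $|\Delta_t|$ is small, and $\W_\be^2\le\W_\be^\be$ fails unless $\W_\be\le1$). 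Hence your claimed bound $\E|\Delta_t|^\be\le C\int_0^t\W_\be(\mu^1_s,\mu^2_s)^\be\,\d s$ does not follow from \eqref{EQ} by this route; what one actually gets has $\W_\be^2$ (or $(\int_0^t\W_\be^2\,\d s)^{\be/2}$) on the right, which is not linear in your metric $d(\mu,\nu)=\sup_t\W_\be(\mu_t,\nu_t)$ and so does not yield a contraction of $\Phi^n$. The alternative of running the estimate at the $L^2$ level and finishing with Jensen also fails globally, because for $\be<2$ the large-jump differences $g(x,\mu_1,z)-g(y,\mu_2,z)$ need only be $\nu$-integrable to the power $\be$ on $V$ (cf.\ \eqref{e:welldefined}, \eqref{EQ*}), so second moments of the solution difference are not available.

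The paper circumvents this by exploiting the finite activity of the $V$-part: it conditions on the jump times $(\si_n)$ of the compound-Poisson component, proves an $L^2$ Gronwall estimate on each inter-jump interval (where only $b$ and $f$ act, so \eqref{EQ} is usable at the quadratic level), passes to the power $\be$ by Jensen to obtain \eqref{ET2-} with the term $\big(\int_{\si_n}^t\W_\be(\mu_s,\tilde\mu_s)^2\,\d s\big)^{\be/2}$, handles the jump times themselves through \eqref{EQ*} and the i.i.d.\ structure of the jump amplitudes, and then sums the resulting recursion. Crucially, the contraction is obtained not by iterating $\Phi$ in the unweighted sup metric but by working with the exponentially weighted metric $\W_{\be,\ga}$, under which the quadratic-in-time term contributes a factor of order $\ga^{-\be/2}$, small for large $\ga$. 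If you want to keep a direct It\^o argument, you must strengthen \eqref{EQ} to the form $L_1(|x-y|+\W_\be(\mu_1,\mu_2))|x-y|$ (as in Assumption $({\bf B}_1)$ and Theorem \ref{theorem3}), which is exactly what the paper remarks. Finally, for \eqref{EQ-} you correctly identify the running-supremum estimate as the hard point but leave it unproved; the paper devotes Theorem \ref{theorem1} to it, combining a BDG inequality for purely discontinuous martingales with conditioning on the jump times of the $V$-part, so this part of your proposal is a sketch rather than a proof, though the main missing idea is the contraction mechanism described above.
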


Below, we make some comments on Theorem \ref{theorem2} and assumptions  mentioned above.
\begin{remark}\label{EX}\rm
\begin{enumerate}
\item[(i)]  \eqref{EQ} shows that $b,f$  satisfy the so-called one-sided Lipschitz condition  so they
are allowed to be non-globally Lipschitz with respect to the state variable as the following example reveals.
For $x,z\in\R^d$ and $\mu\in\mathscr P_\beta(\R^d)$, let
\begin{align*}
 b(x,\mu)&=C_1 x-C_2 x|x|^2+\mu(|h(x-\cdot)|^\beta)^{\frac{1}{\beta}} {\bf 1},\\
 f(x,\mu,z)&= C_3z (1+C_4|x|^2+\mu(|h(x-\cdot)|^\beta)^{\frac{1}{\beta}} ),\\
 g(x,\mu,z)&=({\bf 1}+ z )(1+|x|+\mu(|h(x-\cdot)|^\beta)^{\frac{1}{\beta}}  ),
\end{align*}
where $C_1,C_2,C_3, C_4 >0 $, ${\bf1}:=(1,\cdots,1)^\top\in\R^d$, and $h:\R^d\to\R^d$ is Lipschitz. Then, Assumptions $({\bf A}_1)$, $({\bf A}_2)$ and $({\bf A}_3)$
are valid respectively provided  $\nu(|\cdot|^2\I_U(\cdot))<\8$
and
$C_2>12C_3^2C_4^2\nu(|\cdot|^2\I_U(\cdot))$.

\item[(ii)]For the case $\beta\in(0,1)$, there is no uniqueness of the McKean-Vlasov SDE \eqref{MV}
as  \cite[Remark 2]{CT} shown.
 See also Remark \ref{Add:re1}(i) below for additional comments.
  So, in this work we focus only on the setting $\beta\in[1,2].$ As for
 the case $U=
 \{z\in \R^d: 0<|z|\le 1\}$, $V=
 \{z\in \R^d: |z|>1\}$, $f(x,\mu,z)=\si(x,\mu)z$, and $g(x,\mu,z)=\si(x,\mu)z$ with $\si:\R^d\to\R^d\otimes\R^d$,
the strong well-posedness of \eqref{MV} was addressed in \cite[Theorem 1]{CT} when $b,\si$ are Lipschitz and $L^\beta$-Wasserstein Lipschitz with respect to the spatial variable  and the measure variable, respectively. Whereas, in Theorem \ref{theorem2},  $b$ and $f$ might  be non-globally Lipschitz with respect to the state variables as the previous example  demonstrates. In addition, \cite{DH} addressed the strong well-posedness of the McKean-Vlasov SDE \eqref{MV} with additive noise under the following condition: there is some $L_0>0$ such that for all $x,y\in\R^d$ and
$\mu_1,\mu_2\in\mathscr P_\beta(\R^d)$
\begin{align*}
\<x-y,b(x,\mu_1)-b(y,\mu_2)\>\le L_0(|x-y|+\mathbb W_\beta(\mu_1,\mu_2))|x-y| .
\end{align*}
Apparently, the preceding condition is rigorous than the one imposed in \eqref{EQ}.

\item[(iii)]
In addition to the fixed point theorem used in the proof of Theorem \ref{theorem2},
Yamada-Watanabe's principle is
another approach that is applied widely  to prove  strong well-posedness, e.g., see  \cite{HHL,HX,HW} concerning McKean-Vlasov diffusions and  \cite{DH,HF} for McKean-Vlasov SDEs with additive L\'evy noise. In particular, under the following local  Lipschitz continuity:  for some   $L_1>0$, and any  $x,y\in\R^d,\mu_1,\mu_2\in\mathscr P_1(\R^d),$
\begin{align*}
&|b(x,\mu_1)-b(y,\mu_2)|+\nu(|f(x,\mu_1,z)-f(y,\mu_2,z)|\I_{U}(|\cdot|))\\
&\le L_1(1+|x|+|y|+\mu_1(|\cdot|)+\mu_2(|\cdot|))(|x-y| +\W_1(\mu_1,\mu_2) ),
\end{align*}
 strong well-posedness of a kind of  McKean-Vlasov equations with jumps   was investigated in \cite{EX} by
Yamada-Watanabe's principle.
On the other hand, the strong existence
is implied by weak existence of McKean-Vlasov SDEs and  strong well-posedness
of the corresponding decoupled SDEs.
In literature,  the statement above is called the modified Yamada-Watanabe principle \cite[Lemma 3.4]{HX}.
In general,  some kind of growth condition needs to be  imposed to verify the tightness of the sequence of Euler-type approximation equations
in order to prove the weak existence. For example,  the growth condition: for some $L_2>0$, and any $x\in\R^d,$ $\mu\in\mathscr P_1(\R^d)$,
$$|b(x,\mu)|+|f(x,\mu,z)|
 \le L_2 (1+|z|)(1+|x|+\mu(|\cdot|)) $$
was set in  \cite{EX}.  Clearly, such condition is stronger than Assumption (${\bf A}_2$) in our paper. See Remark \ref{R:2.3} below for comments on the setting of classical SDEs with jumps.
\end{enumerate}
\end{remark}

\subsection{Propagation of chaos}
The McKean-Vlasov SDE \eqref{MV} arises naturally in the framework of the limit for the mean-filed interacting particle system
 in the form:
\begin{equation}\label{MF}
\begin{cases}
\d X^{i,n}_t=b(X^{i,n}_t,\bar{\mu}^n_t)\,\d t+\displaystyle\int_U f(X^{i,n}_{t-},\bar{\mu}^n_t,z)\,\wt N^i(\d t,\d z)\\
\quad\quad\quad\quad +\displaystyle\int_V g(X^{i,n}_{t-},\bar{\mu}^n_t,z)\,N^i(\d t,\d z),\\
X^{i,n}_0=X_0^i, \qquad i=1,2,\cdots,n,
\end{cases}
\end{equation}
where $\bar{\mu}^n_t:=\frac{1}{n}\sum_{i=1}^n\delta_{X^{i,n}_t}$, and $\{N^i(\d t, \d z)\}_{1\le i\le n}$ are independent Poisson measures with the intensity measure $\d t\times \nu(\d z)$.
The link between \eqref{MV} and \eqref{MF} lies in that the dynamics of the particle system \eqref{MF} are expected to be described by \eqref{MV} when the number of particles $n$ goes to infinity.
This property is the so-called propagation of chaos, which was originally studied by Kac \cite{KM} for the Boltzmann equation
and was further developed by Sznitman \cite{SA}.
The propagation of chaos can be interpreted in the weak sense (i.e., in the distribution through the convergence of the empirical measure $\bar{\mu}^n_t$) and  in the strong sense (i.e., from the point of view of paths via  coupling); see \cite{CT,ChD1,ChD2} and references within.

In this subsection,
our purpose is to prove quantitative propagation of chaos both in the weak sense and the strong sense, respectively, concerning the mean-field interacting particle system \eqref{MF}
with $f(x,z)=f(x,\mu,z)$ (i.e., $f$ is unrelated to the measure variable).
 For this,
 we need to replace \eqref{EQ} in Assumption (${\bf A}_1$) by the following stronger version:
\begin{itemize} \it
  \item [{\rm(${\bf A}_1'$)}]
  assume that $\beta\in (1,2]${\rm;}
for  fixed $\mu\in\scr{P}_\be(\R^d)$ and $z\in\R^d$,   $\R^d \ni  x \mapsto b(x,\mu)$ and $\R^d\ni x\mapsto f(x, z)$  are continuous and locally bounded, and for some
$p\in[1,\be)$, there exists a constant $L_4>0$ such that
for any   $x,y,z\in\R^d$,
 and $\mu_1,\mu_2\in\scr{P}_p(\R^d)$,
\begin{equation}\label{EQ4}
\begin{split}
& 2\langle b(x,\mu_1)-b(y,\mu_2),x-y\rangle+  \nu(|f(x,\cdot)-f(y, \cdot)|^2\I_U(\cdot)) \\
&\le L_4(|x-y|+\W_p(\mu_1,\mu_2))|x-y|,
\end{split}
\end{equation}
and
\begin{align}\label{EQ5}
|g(x,\mu_1,z)-g(y,\mu_2,z)| \le L_4(1+|z|)(|x-y| +\W_p(\mu_1,\mu_2) ).
\end{align}
\end{itemize}

Under (${\bf A}_1'$), (${\bf A}_2$) and (${\bf A}_3$), besides \eqref{e:welldefined},
 SDEs  \eqref{MV} and \eqref{MF}  have   unique strong solutions for  $X_0\in L^\beta(\Omega\to\R^d,\mathscr F_0,\P)$ by taking   Theorem \ref{theorem2} into consideration.

 Let $\{(X_t^i)_{t\ge0}\}_{1\le i\le n}$ be $n$-independent versions of the unique solution to the SDE \eqref{MV}
 with $f(x,z)=f(x,\mu,z)$.
 In particular, each $(X_t^i)_{t\ge0}$, $1\le i\le n$, shares the same distribution.  The following theorem provides quantitative characterizations of strong/weak  propagation of chaos in finite time.

\begin{theorem}\label{POC}
Assume that Assumptions $({\bf A}_1')$, $({\bf A}_{2})$ and $({\bf A}_{3})$ hold, and suppose further  $X^i_0\in L^\beta(\Omega\to\R^d,\mathscr F_0,\P)$ for any $1\le i\le n$.
Let $(\mu_t)_{t\ge0}$ be the common distribution of $(X_t^i)_{t\ge0}$ for all $1\le i\le n$, and $\bar{\mu}^n_t=\frac{1}{n}\sum_{i=1}^n\delta_{X^{i,n}_t}$, where $\{(X_t^{i,n})_{t\ge0}\}_{1\le i\le n}$ is the solution to the mean-filed interacting particle system
\eqref{MF}  with   $f(x,z)=f(x,\mu,z)$.
Then, for any fixed $T>0$, there is a constant $ C_T>0$ such that
\begin{equation}\label{wPOC}
\begin{split}
 \E\W_p^p(\bar{\mu}^n_t,\mu_t)
\le C_T\phi_{p,\beta,d}(n),\quad t\in[0,T],
\end{split}
\end{equation}
where the quantity $  \phi_{p,\beta,d}(n)$
is defined as below:
\begin{equation}\label{EE*}
\phi_{p,\beta,d}(n):=
\begin{cases}
n^{-(1-\frac{p}{\be})}, & d=1,2;  d=3 \a \frac{3}{2}\le p<\be; \\
                            &d=3, 1\le p<\frac{3}{2} \a \be<\frac{3p}{3-p}; d\ge4 \a \be<\frac{dp}{d-p}; \\
    n^{-\frac{p}{d}}, & d=3, 1\le p<\frac{3}{2} \a \be\ge\frac{3p}{3-p}; d\ge4 \a \be\ge\frac{dp}{d-p}.
\end{cases}
\end{equation}
Furthermore, for  any $0\le q_1<q_2<1,$
there exists a constant $\hat C_T>0$ such that
\begin{equation}\label{sPOC}
\begin{split}
\E\Big(\sup_{0\le t\le T}|X^{i,n}_t-X_t^i|^{pq_1}\Big)\le \frac{q_2}{q_2-q_1}\big(\hat C_T
\phi_{p,\beta,n}(n)\big)^{q_1}.
\end{split}
\end{equation}
\end{theorem}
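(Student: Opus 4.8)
The plan is to couple the particle system \eqref{MF} with $n$ independent copies of \eqref{MV} driven by the same Poisson measures, to run a Gr\"onwall estimate for the $p$-th moment of the difference --- which already yields the quantitative weak bound \eqref{wPOC} --- and then to upgrade it to the pathwise bound \eqref{sPOC} via a Lenglart-domination argument followed by a layer-cake identity. For the coupling step, viewing \eqref{MF} as a single SDE on $\R^{nd}$ whose coefficients inherit $({\bf A}_1')$--$({\bf A}_3)$ (the dependence on $\bar\mu^n$ is $\W_\beta$-Lipschitz with an $O(1)$ constant in the Euclidean distance of $\R^{nd}$), Theorem \ref{theorem2} provides the strong solution $\{(X^{i,n}_t)\}_{1\le i\le n}$ together with $\sup_{0\le t\le T}\E|X^{i,n}_t|^\beta\le C_T(1+\E|X^i_0|^\beta)$, and with \eqref{B0} also $\sup_{[0,T]}\mu_t(|\cdot|^\beta)<\8$. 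Put $Z^i_t:=X^{i,n}_t-X^i_t$ (so $Z^i_0=0$) and $\hat\mu^n_t:=\frac1n\sum_{j=1}^n\delta_{X^j_t}$; the triangle inequality for $\W_p$, the index coupling $\W_p^p(\bar\mu^n_t,\hat\mu^n_t)\le\frac1n\sum_j|Z^j_t|^p$, and the exchangeability of $\{(X^{j,n}_\cdot,X^j_\cdot)\}_j$ (which holds since the $X^j_0$ are i.i.d.\ and the $N^j$ are i.i.d.) give, for every $t$,
\[
\E\W_p^p(\bar\mu^n_t,\mu_t)\le 2^{p-1}\E|Z^1_t|^p+2^{p-1}\E\W_p^p(\hat\mu^n_t,\mu_t).
\]
Since the $X^j_t$ are i.i.d.\ with law $\mu_t\in\scr P_\beta(\R^d)$ and $\sup_{[0,T]}\mu_t(|\cdot|^\beta)<\8$, the Fournier--Guillin estimate on the convergence rate of the empirical measure in $\W_p$ with moment exponent $\beta>p$ yields $\sup_{0\le t\le T}\E\W_p^p(\hat\mu^n_t,\mu_t)\le C_T\,\phi_{p,\beta,d}(n)$; this is exactly where the rate \eqref{EE*} enters.

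Next I would apply the It\^o--L\'evy formula to $\psi_\varepsilon(Z^i_t)$ with $\psi_\varepsilon(z):=(\varepsilon+|z|^2)^{p/2}$ (the regularization is needed because $p<2$). Using $0\le\nabla^2\psi_\varepsilon(z)\le p(\varepsilon+|z|^2)^{p/2-1}\,\mathrm{Id}$ and $|\nabla\psi_\varepsilon(z)|\le p(\varepsilon+|z|^2)^{(p-1)/2}$, the $U$-part of the generator (the drift together with the small-jump compensator) is controlled by \eqref{EQ4}, and the $V$-part, which is of the form $\int_V[\psi_\varepsilon(Z^i_s+h)-\psi_\varepsilon(Z^i_s)]\,\nu(\d z)$ with $|h|\le L_4(1+|z|)(|Z^i_s|+\W_p(\bar\mu^n_s,\mu_s))$ by \eqref{EQ5}, is controlled using $\nu((1+|\cdot|)^p\I_V(\cdot))<\8$ (a consequence of \eqref{e:welldefined}, since $p\le\beta$); after Young's inequality applied to the cross term $(\varepsilon+|Z^i_s|^2)^{(p-1)/2}\W_p(\bar\mu^n_s,\mu_s)$, exactly as in the proof of Theorem \ref{theorem2}, the whole generator is bounded by $C\psi_\varepsilon(Z^i_s)+C\W_p^p(\bar\mu^n_s,\mu_s)$. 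Taking expectations (the compensated integrals are martingales after a localization justified by the moment bounds above, and $\E\psi_\varepsilon(Z^i_0)=\varepsilon^{p/2}$) and inserting the previous display,
\[
\E\psi_\varepsilon(Z^i_t)\le\varepsilon^{p/2}+C\int_0^t\E\psi_\varepsilon(Z^i_s)\,\d s+C_T\,\phi_{p,\beta,d}(n),\qquad 0\le t\le T,
\]
so Gr\"onwall's inequality and $\varepsilon\downarrow0$ (monotone convergence) give $\sup_{0\le t\le T}\E|Z^i_t|^p\le C_T\,\phi_{p,\beta,d}(n)$; plugging this back into the sandwich above proves \eqref{wPOC}.

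For \eqref{sPOC}, I would write the It\^o--L\'evy representation above as $\psi_\varepsilon(Z^i_t)=G^\varepsilon_t+M^\varepsilon_t$, where $M^\varepsilon$ is a local martingale with $M^\varepsilon_0=0$ and $G^\varepsilon$ is nondecreasing, adapted and satisfies $\E G^\varepsilon_T\le C_T(\varepsilon^{p/2}+\phi_{p,\beta,d}(n))$ by the estimates above. Since $\psi_\varepsilon(Z^i_\cdot)\ge0$ and $\psi_\varepsilon(Z^i_t)\le G^\varepsilon_t+M^\varepsilon_t$, the process $\psi_\varepsilon(Z^i_\cdot)$ is Lenglart-dominated by $G^\varepsilon$, so for the fixed $q_2\in(0,1)$ Lenglart's inequality together with Jensen's inequality for $x\mapsto x^{q_2}$ gives
\[
\E\Big[\Big(\sup_{0\le t\le T}\psi_\varepsilon(Z^i_t)\Big)^{q_2}\Big]\le\frac{2-q_2}{1-q_2}\,\E\big[(G^\varepsilon_T)^{q_2}\big]\le C_{T,q_2}\big(\varepsilon^{pq_2/2}+\phi_{p,\beta,d}(n)^{q_2}\big).
\]
Letting $\varepsilon\downarrow0$ (monotone convergence, using that $Z^i$ has right-continuous paths with left limits on $[0,T]$) yields $\E[(\sup_{0\le t\le T}|Z^i_t|^p)^{q_2}]\le(\hat C_T\,\phi_{p,\beta,d}(n))^{q_2}$ for a suitable $\hat C_T$; Markov's inequality then gives $\P(\sup_{0\le t\le T}|Z^i_t|^p>\lambda)\le 1\wedge\lambda^{-q_2}(\hat C_T\phi_{p,\beta,d}(n))^{q_2}$ for all $\lambda>0$, and feeding this into the layer-cake identity $\E[Y^{q_1}]=\int_0^\8 q_1\lambda^{q_1-1}\P(Y>\lambda)\,\d\lambda$ (with $Y:=\sup_{0\le t\le T}|Z^i_t|^p$) and splitting the integral at $\lambda=\hat C_T\phi_{p,\beta,d}(n)$ gives $\E[\sup_{0\le t\le T}|Z^i_t|^{pq_1}]\le\frac{q_2}{q_2-q_1}(\hat C_T\,\phi_{p,\beta,d}(n))^{q_1}$, which is \eqref{sPOC} (the subscript $\phi_{p,\beta,n}$ there should read $\phi_{p,\beta,d}$).

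The delicate point is the It\^o--L\'evy estimate in the second step: since $p\in[1,\beta)\subset[1,2)$ while the monotonicity \eqref{EQ4} is of $L^2$-type, one must regularize with $(\varepsilon+|\cdot|^2)^{p/2}$ and handle the jump-generator correction terms carefully, using \eqref{e:welldefined} and the moment bounds to ensure all $\nu$-integrals and compensated integrals are well defined and that the passage $\varepsilon\downarrow0$ is licit; once this is in place, the weak estimate reduces to the empirical-measure sandwich (into which the Fournier--Guillin rate is fed) and the pathwise estimate follows from the soft Lenglart/layer-cake mechanism.
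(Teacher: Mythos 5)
Your proposal is correct and follows essentially the same route as the paper: the same triangle-inequality sandwich between $\bar\mu^n_t$, the empirical measure of the $n$ i.i.d.\ copies, and $\mu_t$, the Fournier--Guillin rate \cite{FG} for the latter, and an It\^o/Gronwall estimate for the regularized quantity $(\vv+|X^{i,n}_t-X^i_t|^2)^{p/2}$ based on \eqref{EQ4}, \eqref{EQ5} and Young's inequality, exactly as in \eqref{WT}. The only deviation is the sup-in-time step, where the paper cites the stochastic Gronwall inequality \cite[Lemma 3.7]{XZ} as a black box, whereas you rederive it by hand via Lenglart domination of $\psi_\vv(Z^i)$ by the nondecreasing drift part plus the Markov/layer-cake interpolation from the $q_2$- to the $q_1$-moment, which correctly reproduces the constant $q_2/(q_2-q_1)$ in \eqref{sPOC}.
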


To proceed, concerning Theorem \ref{POC},
we make a remark on the structure of $f$ and the prerequisite \eqref{EQ4}.
\begin{remark}\rm
Recall  that the well-posedness of \eqref{MV} is explored under $({\bf A}_1)$ via the interlacing trick. So, it is quite natural to investigate the issue on propagation of chaos under $({\bf A}_1)$. Whilst, the empirical measure involved in \eqref{MF} is random so
the above technique
does not work any more.
In turn, we  reinforce Assumption \eqref{EQ}
as \eqref{EQ4}.
 Once $f$ is dependent on the measure variable $($as given in \eqref{MF}$)$, the assumption \eqref{EQ4} can be formulated  as below: for any   $x,y \in\R^d$,
 and $\mu_1,\mu_2\in\scr{P}_p(\R^d)$,
\begin{equation*}\label{EQ4*}
\begin{split}
& 2\langle b(x,\mu_1)-b(y,\mu_2),x-y\rangle+  \nu(|f(x,\mu_1,\cdot)-f(y, \mu_2,\cdot)|^2\I_U(\cdot)) \\
&\le L_4(|x-y|+\W_p(\mu_1,\mu_2))|x-y|.
\end{split}
\end{equation*}
In the preceding inequality, in case of $b\equiv{\bf0}$, one has $\nu(|f(x,\mu_1,\cdot)-f(x, \mu_2,\cdot)|^2\I_U(\cdot))=0$
for arbitrary  $\mu_1,\mu_2\in\scr{P}_p(\R^d)$. Accordingly, we can conclude that $f$ is irrelevant  to the measure variable
(at least when $b\equiv{\bf0}$).
Let $b$ be defined as in Remark \ref{EX}${\rm(i)}$ and $f(x,z)=
C_3z(1+C_4|x|^2)$ for some $C_3, C_4>0$. For this case,
\eqref{EQ4} is valid
when the assumptions in Remark \ref{EX}${\rm(i)}$ are satisfied
 by examining the proof of Remark \ref{EX}${\rm(i)}$; see the end of Section \ref{sec3} for more details.
\end{remark}

\ \

The content of this paper is organized as follows. In Section \ref{proofs},  via a Picard iteration approach, we investigate strong well-posedness of classical time-inhomogeneous  SDEs with L\'{e}vy noises under a local weak monotonicity and a weak coercivity, which is quite interesting in it's own right.  Also,
by invoking the interlacing technique, a uniform moment estimate in a finite horizon is   established in Section \ref{proofs}.   Based on the theory derived in Section \ref{proofs}, along with the Banach fixed point theorem and the interlacing technique, the proof of Theorem \ref{theorem2} is complete in Section \ref{sec3}. In addition, the remaining part of Section \ref{sec3} is devoted to
the proof of Theorem \ref{POC}, which is concerned with the weak propagation of chaos and the associated strong version.   In the last section, we extend accordingly Theorems \ref{theorem2} and \ref{POC} to L\'{e}vy-driven McKean-Vlasov SDEs with common noise.

\section{Well-posedness of classical SDEs with L\'evy noises}\label{proofs}
The fixed point theorem is one of the powerful tools to investigate  well-posedness of McKean-Vlasov SDEs under variant settings.
For this purpose, the corresponding distribution-frozen SDE (which, in  literature,  is also named as a decoupled SDE ) need to be considered.
In other words, by the aid of the decoupled SDE (which definitely is a time-inhomogeneous SDE), along with the fixed point theorem,
the well-posedness of McKean-Vlasov SDEs can be  treated. Inspired by the aforementioned routine,
in this section, we focus on the following time-inhomogeneous SDE: for any $t\ge0,$
\begin{equation} \label{e}
\d X_t=b_t(X_t)\,\d t+\int_U f_t({ X_{t-}},z)
\,\wt N(\d t,\d z)+\int_V g_t(X_{t-},z)
\,N(\d t,\d z),
\end{equation}
where $b:[0,\8)\times\R^d\to\R^d,$ and $f,g:[0,\8)\times\R^d\times \R^d\to\R^d$ are jointly measurable;
the subsets $U,V$, and the random measures $N(\d t,\d z)$, $\wt N(\d t,\d z)$ are untouched  as those in \eqref{MV}.
 In this section, we assume that
 for all $t\ge0$ and $x\in \R^d$, $$\nu(|f_t(x,\cdot)|^2\I_U(\cdot))+\nu((|g_t(x,\cdot)|^\beta\vee 1)\I_V(\cdot))<\8,$$ where $\beta\in  (0,2]$. Herein,  we emphasize that the results to be derived in this section hold true for all $\beta\in (0,2]$ instead of  $\beta\in [1,2]$.
 In general,
the second stochastic integral and the third one on the right hand side of \eqref{e} are  concerned   with   small jumps and   big jumps, respectively.

\subsection{Main results}
Inspired by the diffusive setting (e.g. \cite[Theorem 3.1.1]{PR}),
to address the well-posedness of \eqref{e}, we impose the following local weak monotonicity, weak coercivity, and local integrability on the coefficients. In detail,  we shall assume that
\begin{enumerate}\it
\item[{\rm(${\bf H}_1$)}]  for each fixed $t\ge0$ and $z\in\R^d,$ $\R^d\ni x\mapsto b_t(x)$, $\R^d\ni x\mapsto f_t(x,z)$ and $\R^d\ni x\mapsto g_t(x,z)$ are continuous and locally bounded;
for any fixed $R>0$, there exists an increasing and  locally integrable   function $[0,\8)\ni t\mapsto K_t(R) $ such that
for any $x,y\in\R^d$ with $|x|\vee |y|\le R$ and  $t\ge0$,
\begin{align}\label{E5}
2\<x-y,b_t(x)-b_t(y)\>+ \nu( |f_t(x,\cdot)-f_t(y,\cdot)|^2\I_U(\cdot))\le K_t(R)|x-y|^2;
\end{align}

\item[{\rm(${\bf H}_2$)}]  there exists an increasing and  locally integrable function $\varphi :[0,\8)\to(0,\8)$ such that  for $x\in\R^d$
 any $t\ge0$,
\begin{align*}
2\<x,b_t(x)\>
 &+\nu( |f_t(x,\cdot)|^2\I_U(\cdot))\\
 &+ \be^{-1}2^{\frac{\be}{2}+1} (1+|x|^2)^{1-\frac{\be}{2}}  \nu( |g_t(x,\cdot)|^\be\I_V(\cdot)) \le \varphi(t)(1+|x|^2);
\end{align*}
\item[{\rm(${\bf H}_3$)}] for any $R,T>0,$
\begin{align*}
\int_0^T\Big(\sup_{\{|x|\le R\}}|b_t(x)|+\int_U\sup_{\{|x|\le R\}}  |f_t(x,z)|^2\nu(\d z)\Big)\,\d t<\8.
\end{align*}
\end{enumerate}

Under assumptions above,  \eqref{e} is strongly well-posed as the theorem below states.

\begin{theorem}\label{thm}
Assume that
$({\bf H}_{1})$-$({\bf H}_{3})$ hold true, and suppose further
 $X_0\in L^\beta(\Omega\to\R^d,\mathscr F_0,\P)$.
Then, \eqref{e} has a unique strong solution $(X_t)_{t\ge0}$ satisfying  that, for any $T>0,$ there exists a constant $C_T>0$ such that
\begin{align}\label{E12}
 \sup_{0\le t\le T}\E|X_t|^\beta \le C_T(1+\E|X_0|^\beta).
\end{align}

\end{theorem}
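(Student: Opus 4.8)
The plan is to run the classical monotonicity scheme for SDEs, in the spirit of \cite[Theorem 3.1.1]{PR}, adapted to the jump setting, with the large jumps peeled off by interlacing. Since the standing integrability forces $\nu(V)<\8$, the large-jump driver $\int_V g_s(X_{s-},z)\,N(\d s,\d z)$ has, almost surely, only finitely many jump epochs $0<T_1<T_2<\cdots$ on each bounded interval, with marks that are i.i.d.\ with law $\nu(\cdot\cap V)/\nu(V)$ and independent of $\wt N$. Hence it suffices to prove strong well-posedness of, and the bound \eqref{E12} for, the ``small-jump'' equation
\begin{equation*}
\d X_t=b_t(X_t)\,\d t+\int_U f_t(X_{t-},z)\,\wt N(\d t,\d z),
\end{equation*}
and then to reconstruct the solution of \eqref{e} by solving this equation successively on the random intervals $[T_k,T_{k+1})$ started from $X_{T_k}=X_{T_k^-}+g_{T_k}(X_{T_k^-},Z_k)$. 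Only $({\bf H}_1)$, $({\bf H}_3)$ and the $b,f$-part of $({\bf H}_2)$ are needed for the small-jump equation; the $g$-term of $({\bf H}_2)$ enters only in the interlacing step.

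For uniqueness, let $X,Y$ solve the small-jump equation with $X_0=Y_0$ and set $\tau_R:=\inf\{t\ge0:|X_t|\vee|Y_t|\ge R\}$. It\^o's formula for $|X_{t\wedge\tau_R}-Y_{t\wedge\tau_R}|^2$ shows that its bounded-variation part equals $\int_0^{t\wedge\tau_R}\big(2\<X_s-Y_s,b_s(X_s)-b_s(Y_s)\>+\nu(|f_s(X_s,\cdot)-f_s(Y_s,\cdot)|^2\I_U)\big)\,\d s$, which by $({\bf H}_1)$ is at most $\int_0^{t\wedge\tau_R}K_s(R)|X_s-Y_s|^2\,\d s$, while the $\wt N$-integral, once stopped at $\tau_R$, is a mean-zero martingale. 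Gronwall's inequality with the locally integrable weight $K_\cdot(R)$ gives $X_{\cdot\wedge\tau_R}\equiv Y_{\cdot\wedge\tau_R}$; since c\`adl\`ag paths are locally bounded, $\tau_R\uparrow\8$ and hence $X\equiv Y$. For \eqref{e} itself uniqueness then extends across each $T_k$ automatically, because $g_{T_k}(\cdot,Z_k)$ is a deterministic map applied to $X_{T_k^-}$.

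For existence I would approximate $b_t,f_t$ by coefficients $b_t^{(m)},f_t^{(m)}$ that are globally Lipschitz in the space variable with a locally integrable (in $t$) Lipschitz constant, converge locally uniformly to $b_t,f_t$, and retain the coercivity $({\bf H}_2)$ with a locally integrable bound that is \emph{uniform in} $m$ (a damping of the form $b_t^{(m)}=(\theta_m b_t)*\rho_{\varepsilon_m}$, $f_t^{(m)}=(\theta_m^{1/2}f_t)*\rho_{\varepsilon_m}$ with $\theta_m\in[0,1]$ a cutoff and $\varepsilon_m\downarrow0$ chosen fast enough relative to $\sup_{|x|\le R_m}|b_t|$, does this, using $({\bf H}_3)$). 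For each $m$ a standard Picard iteration in $C([0,T];L^2)$ produces a unique solution $X^{(m)}$. Running It\^o's formula on $(1+|X_t^{(m)}|^2)^{\beta/2}$ and using $({\bf H}_2)$ together with the elementary estimate $u(x+h)-u(x)-\<\nabla u(x),h\>\le\tfrac\be2(1+|x|^2)^{\frac\be2-1}|h|^2$ for $u=(1+|\cdot|^2)^{\beta/2}$ and $\beta\le 2$ (which makes the L\'evy--It\^o drift dominated by $\tfrac\be2\varphi(t)(1+|x|^2)^{\beta/2}$), followed by Gronwall and Fatou, yields $\sup_m\sup_{t\le T}\E(1+|X_t^{(m)}|^2)^{\beta/2}\le C_T(1+\E|X_0|^\beta)$; hence $\sup_m\P(\inf\{t:|X_t^{(m)}|\ge R\}\le T)\to0$ as $R\to\8$, so $\{X^{(m)}\}$ is tight on the Skorokhod space. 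Passing to a subsequential limit and using the weak monotonicity \eqref{E5} via the stopping-time/``Minty-type'' argument of \cite[Theorem 3.1.1]{PR} identifies a solution of the small-jump equation, which by the uniqueness above is \emph{the} solution, and \eqref{E12} for it follows from the uniform bounds and Fatou. Finally, interlacing reassembles the solution of \eqref{e}: at each $T_k$ one has $\E[(1+|X_{T_k}|^2)^{\beta/2}\mid\mathscr F_{T_k^-}]\le 2^{\beta/2}(1+|X_{T_k^-}|^2)^{\beta/2}+2^{\beta/2}\nu(V)^{-1}\nu(|g_{T_k}(X_{T_k^-},\cdot)|^\be\I_V)$ from $1+|x+g|^2\le 2(1+|x|^2)+2|g|^2$, and the term $\be^{-1}2^{\be/2+1}(1+|x|^2)^{1-\be/2}\nu(|g_t(x,\cdot)|^\be\I_V)$ in $({\bf H}_2)$ is calibrated exactly so that this is dominated by $(2^{\beta/2}+c\,\varphi(T_k))(1+|X_{T_k^-}|^2)^{\beta/2}$; since there are only finitely many $T_k$ in $[0,T]$ (with $\E[c^{\#\{T_k\le T\}}]<\8$), \eqref{E12} propagates to the full equation.

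The main obstacle is the existence step, and specifically the construction of the approximating sequence. The naive radial truncation $b_t(\pi_R\cdot)$ destroys both the one-sided character and the coupled structure of \eqref{E5} (and even pathwise uniqueness of the truncated equation), so the approximation has to be chosen with care, and one must arrange simultaneously that the $b^{(m)},f^{(m)}$ are globally Lipschitz (so Picard applies), converge locally uniformly (so the limit solves the true equation), and preserve $({\bf H}_2)$ with constants uniform in $m$ (so the a priori bounds needed for tightness are uniform). A second technical point is that, because the regime is $\beta\in(0,2]$, the small-jump stochastic integral is only $L^\beta$-type, so the a priori estimates must be carried through with $(1+|x|^2)^{\beta/2}$ and the $L^\beta$-version of the Burkholder--Davis--Gundy inequality rather than the $L^2$ isometry, with the constants in $({\bf H}_2)$ tracked so that the drift part and the interlaced large-jump increments together close the Gronwall argument for \eqref{E12}. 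Uniqueness, by contrast, is a routine It\^o-plus-localization computation.
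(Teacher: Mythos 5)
Your skeleton (solve the small-jump equation, splice in the big jumps by interlacing, prove uniqueness by a localized It\^o--Gronwall argument) is the same as the paper's, but your existence mechanism is genuinely different, and as written it has a gap. The paper never mollifies the coefficients and never uses compactness: it runs the time-discretized scheme \eqref{E2} (coefficients frozen at $\lfloor tn\rfloor/n$) and proves in Lemmas \ref{lem1}--\ref{Lem3} that this scheme is Cauchy, uniformly in probability, on the \emph{original} stochastic basis, so the limit is automatically adapted and driven by the given $\wt N$. In your route, tightness on the Skorokhod space only yields convergence in law along a subsequence, and the ``Minty-type argument of \cite[Theorem 3.1.1]{PR}'' you invoke does not exist in that form: that theorem is itself proved by a time-discretization/Cauchy argument, and the Minty trick in \cite{PR} is used for weak limits on a fixed stochastic basis in the variational setting, not for convergence in law of c\`adl\`ag processes jointly with a Poisson random measure. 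To close your argument you would need either a Skorokhod-representation step identifying a weak solution followed by the Yamada--Watanabe theorem for jump SDEs (legitimate, since you do prove pathwise uniqueness), or a Gy\"ongy--Krylov two-subsequence argument upgrading to convergence in probability on the original space; moreover, tightness itself requires an Aldous-type increment control (available from the envelopes in $({\bf H}_3)$ after localization), not merely the exit-time bound $\sup_m\P(\tau_R^{(m)}\le T)\to0$. (Minor: Picard in $C([0,T];L^2)$ presumes $X_0\in L^2$, while only $X_0\in L^\beta$, $\beta\le 2$, is assumed; this is fixable by conditioning on $\{|X_0|\le K\}$.)

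The more serious gap is your derivation of \eqref{E12} for the full equation \eqref{e} by propagating moments across the big-jump epochs. You claim that $\nu(|g_{T_k}(X_{T_k^-},\cdot)|^\beta\I_V)$ is ``calibrated'' by $({\bf H}_2)$ to be dominated by $c\,\varphi(T_k)(1+|X_{T_k^-}|^2)^{\beta/2}$. But $({\bf H}_2)$ bounds only the \emph{sum} of the $b$-, $f$- and $g$-terms, and the $b,f$-part may be arbitrarily negative (this is the point of weak coercivity; take $b_t(x)=-x|x|^2$): then $({\bf H}_2)$ permits $\nu(|g_t(x,\cdot)|^\beta\I_V)$ of order $(1+|x|^2)^{\frac{\beta}{2}-1}|x|^4\sim |x|^{\beta+2}$, superlinear in $(1+|x|^2)^{\beta/2}$, so the per-jump inequality $\E\big[(1+|X_{T_k}|^2)^{\beta/2}\,\big|\,\mathscr F_{T_k^-}\big]\le c\,(1+|X_{T_k^-}|^2)^{\beta/2}$ fails, and the multiplicative propagation through the jump times cannot be closed when only $\beta$-moments are available. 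The paper does not interlace the estimate at all: it applies It\^o's formula to the full solution with $V_\beta(x)=(1+|x|^2)^{\beta/2}$ and the discount $\e^{-\int_0^t(\frac{\beta}{2}\varphi(s)+\nu(\I_V))\,\d s}$, bounds the big-jump contribution via \eqref{KL2} by $2^{\beta/2}\nu(|g_t(x,\cdot)|^\beta\I_V)+|x|^\beta\nu(\I_V)$, and recombines the first term with the drift and small-jump terms pointwise in $(t,x)$ so that the single $({\bf H}_2)$ inequality applies and the possibly huge $g$-part is absorbed by the dissipative drift. You should replace your per-jump accounting by this generator-level estimate; interlacing is needed only to construct the solution, as in the paper, not to estimate it.
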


To achieve a uniform moment estimate in a finite horizon, we strengthen $({\bf H}_{2})$ as follows:
\begin{enumerate}\it
\item[{\rm(${\bf H}_2'$)}]there exists an increasing and   locally integrable function $\phi:[0,\8)\to(0,\8)$ such that for any $x\in\R^d$ and $t\ge0,$
\begin{align*}
\<x,b_t(x)\>\vee\nu(|f_t(x,\cdot)|^2\I_U(\cdot) )\vee\big(\nu(|g_t(x,\cdot)|^\beta\I_V(\cdot) )\big)^{\frac{2}{\beta}}\le \phi(t)(1+|x|^2).
\end{align*}
\end{enumerate}

With Assumption (${\bf H}_2'$) at hand, a stronger version of \eqref{E12}   can be obtained.

 \begin{theorem}\label{theorem1}
 Assume  that
 $({\bf H}_1)$, $({\bf H}_2')$ and $({\bf H}_{3})$  hold true, and suppose further
  $X_0\in L^\beta(\Omega\to\R^d,\mathscr F_0,\P)$. Then, for any fixed $T>0$, there is a constant $C'_T>0$ such that
 \begin{equation}\label{moment}
 \E\Big(\sup_{0\le t\le T} |X_t |^\be\Big)\le
 C'_T (1+\E|X_0|^\beta ).
 \end{equation}
 \end{theorem}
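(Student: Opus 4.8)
The plan is to invoke Theorem~\ref{thm}, which already supplies the unique strong solution $(X_t)_{t\ge0}$ and the pointwise moment bound $\sup_{0\le t\le T}\E|X_t|^\beta\le C_T(1+\E|X_0|^\beta)$ (it applies under the present hypotheses because $(\mathbf{H}_2')$ is stronger than $(\mathbf{H}_2)$), so that all that remains is to pass the supremum inside the expectation. Throughout I would work with $V(x):=(1+|x|^2)^{\beta/2}$, which is smooth since $1+|x|^2\ge1$, and rely on the elementary estimates, valid for $\beta\in(0,2]$ and all $x,h\in\R^d$,
\[
|V(x+h)-V(x)-\langle\nabla V(x),h\rangle|\le C_\beta(1+|x|^2)^{\beta/2-1}|h|^2,\qquad V(x+h)\le 2^{\beta/2}\big(V(x)+|h|^\beta\big),
\]
supplemented by $|V(x+h)-V(x)|\le C_\beta(1+|x|^2)^{(\beta-1)/2}|h|$ when $|h|\le\tfrac12(1+|x|^2)^{1/2}$ and $|V(x+h)-V(x)|\le C_\beta(1+|x|^2)^{\beta/2-1}|h|^2$ when $|h|>\tfrac12(1+|x|^2)^{1/2}$, the last because $|h|^\beta=|h|^{\beta-2}|h|^2$ and $t\mapsto t^{\beta-2}$ is non-increasing on $[\tfrac12(1+|x|^2)^{1/2},\8)$.

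Applying It\^o's formula to $V(X_t)$ along \eqref{e} gives
\[
V(X_t)=V(X_0)+\int_0^t(\mathcal{A}_s+\mathcal{B}_s)\,\d s+M^{(1)}_t+M^{(2)}_t,
\]
where $\mathcal{A}_s=\langle\nabla V(X_s),b_s(X_s)\rangle+\int_U[V(X_s+f_s(X_s,z))-V(X_s)-\langle\nabla V(X_s),f_s(X_s,z)\rangle]\,\nu(\d z)$, $\mathcal{B}_s=\int_V[V(X_s+g_s(X_s,z))-V(X_s)]\,\nu(\d z)$, and $M^{(1)},M^{(2)}$ are the compensated Poisson integrals over $U$ and $V$, respectively. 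Using $(\mathbf{H}_2')$ in the three forms $\langle x,b_s(x)\rangle\le\phi(s)(1+|x|^2)$, $\nu(|f_s(x,\cdot)|^2\I_U)\le\phi(s)(1+|x|^2)$, $\nu(|g_s(x,\cdot)|^\beta\I_V)\le(\phi(s)(1+|x|^2))^{\beta/2}$, together with $\nu(V)<\8$ (which follows from the standing integrability hypothesis), the first displayed inequality of paragraph one and the bound $|V(x+g)-V(x)|\le C_\beta(V(x)+|g|^\beta)$ give $\mathcal{A}_s+\mathcal{B}_s\le\psi(s)V(X_s)$ for some increasing, locally integrable function $\psi$; in particular $\sup_{0\le s\le T}\E Y_s<\8$ with $Y_t:=V(X_t)$, consistently with Theorem~\ref{thm}.

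Taking $\sup_{0\le t\le T}$ and expectation, I would control the two martingales separately. Because $\nu(V)<\8$ the big jumps form a finite-activity Poisson process, so $M^{(2)}$ is handled pathwise: the bound $|V(x+g)-V(x)|\le C_\beta(V(x)+|g|^\beta)$ and $(\mathbf{H}_2')$ make $\int_0^T\int_V|V(X_{s-}+g_s(X_{s-},z))-V(X_{s-})|\,\nu(\d z)\,\d s$ finite, almost surely and in mean, so $M^{(2)}$ is a difference of two increasing processes, $\sup_{t\le T}|M^{(2)}_t|$ is dominated by $\int_0^T\int_V|\cdots|\,N(\d s,\d z)+\int_0^T\int_V|\cdots|\,\nu(\d z)\,\d s$, and its expectation is at most $2\E\int_0^T\int_V|\cdots|\,\nu(\d z)\,\d s\le C_T\int_0^T\E Y_s\,\d s\le C_T(1+\E|X_0|^\beta)$ --- equivalently one may use the interlacing technique, cutting the path at the finitely many big-jump times and applying the no-big-jump estimate on each block. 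For $M^{(1)}$ I would split $M^{(1)}=M^{(1,\mathrm{lg})}+M^{(1,\mathrm{sm})}$ according to whether $|f_s(X_{s-},z)|>\tfrac12(1+|X_{s-}|^2)^{1/2}$ or not. On the ``large'' part the bound $|V(x+h)-V(x)|\le C_\beta(1+|x|^2)^{\beta/2-1}|h|^2$ reduces the integrand to a multiple of $|f_s(x,\cdot)|^2$, so, as for $M^{(2)}$, $\E\sup_{t\le T}|M^{(1,\mathrm{lg})}_t|\le C_T\int_0^T\phi(s)\E Y_s\,\d s\le C_T(1+\E|X_0|^\beta)$. On the ``small'' part, $|V(x+h)-V(x)|^2\le C_\beta(1+|x|^2)^{\beta-1}|h|^2$, so the predictable quadratic variation satisfies $\langle M^{(1,\mathrm{sm})}\rangle_T\le C_\beta\int_0^T\phi(s)Y_{s-}^2\,\d s\le C_\beta\big(\sup_{s\le T}Y_s\big)\int_0^T\phi(s)Y_{s-}\,\d s$; Burkholder--Davis--Gundy combined with Lenglart's domination inequality then gives $\E\sup_{t\le T}|M^{(1,\mathrm{sm})}_t|\le C\,\E\langle M^{(1,\mathrm{sm})}\rangle_T^{1/2}$, and Young's inequality $ab\le\delta a^2+(4\delta)^{-1}b^2$ lets one absorb $\tfrac14\E\sup_{t\le T}Y_t$ into the left side, leaving a remainder $\le C_T\int_0^T\phi(s)\E Y_s\,\d s$.

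Collecting the three contributions, together with $\int_0^T\psi(s)\E Y_s\,\d s\le C_T(1+\E|X_0|^\beta)$, yields $\E\sup_{0\le t\le T}Y_t\le\tfrac14\E\sup_{0\le t\le T}Y_t+C_T(1+\E|X_0|^\beta)$; since $|X_t|^\beta\le V(X_t)$ and $\E V(X_0)\le C_\beta(1+\E|X_0|^\beta)$, this is exactly \eqref{moment} --- provided the left-hand side is finite. To make the absorption legitimate I would first run the whole computation with $X$ stopped at $\tau_n:=\inf\{t\ge0:|X_t|>n\}$: the stopped martingales are then genuine martingales and $\E\sup_{t\le T}Y_{t\wedge\tau_n}<\8$ for each $n$ (using $\E Y_{\tau_n\wedge T}\le C_T(1+\E|X_0|^\beta)$, which follows from Gronwall applied to the stopped drift identity), so one obtains the estimate uniformly in $n$ and then lets $n\to\8$ by monotone convergence, which applies because $\sup_{0\le t\le T}|X_t|<\8$ almost surely forces $\tau_n\uparrow\8$. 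I expect the only genuinely delicate point to be the small-jump martingale $M^{(1)}$: since $\nu$ may have infinite mass on $U$ while $(\mathbf{H}_2')$ controls only the $L^2(\nu)$-norm of $f_s(x,\cdot)$, no $\nu$-moment of $f$ beyond the second is available, so every estimate for the small-jump part must be routed through $\nu(|f_s(x,\cdot)|^2\I_U)$; this is precisely why the size-based split of $M^{(1)}$, and in particular the bound $|V(x+h)-V(x)|\le C_\beta(1+|x|^2)^{\beta/2-1}|h|^2$ on $\{|h|>\tfrac12(1+|x|^2)^{1/2}\}$, are indispensable, whereas the big jumps, being finite-activity, are comparatively benign.
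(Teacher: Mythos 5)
Your proposal is correct in substance, but it follows a genuinely different route from the paper's proof of Theorem \ref{theorem1}. The paper exploits the finite activity of the big jumps through the interlacing decomposition: writing $[0,T]$ as the union of the inter-jump blocks $[\si_k,\si_{k+1})$ of the process $Z^V$ from \eqref{EP}, it proves on each block the \emph{conditional} second-moment bound \eqref{ER} by applying It\^o's formula to $|x|^2$, the maximal inequality of \cite{MR} for the purely discontinuous martingale, and Gronwall, and only then passes to the $\beta$-th power by Jensen; the blocks are reassembled using the Poisson counting process $N_T$, the independence of $\I_{\{\si_n\le T<\si_{n+1}\}}$ from the block suprema, and the already established bound \eqref{E12}. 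The conditioning is exactly what allows the paper to work at the $L^2$ level even though only $\E|X_0|^\beta<\8$ is assumed. You instead stay at the $\beta$-moment level throughout: one application of It\^o's formula to $V_\beta(x)=(1+|x|^2)^{\beta/2}$ for the full equation, with the big-jump martingale and the ``large'' small jumps (relative to $(1+|X_{s-}|^2)^{1/2}$) controlled in $L^1$ via the compensation formula, and the genuinely small jumps controlled in $L^2$ via BDG plus Lenglart (equivalently, one could quote the same inequality from \cite{MR} that the paper uses, which gives $\E\sup|M|\lesssim \E\big(\int\!\!\int |h|^2\,\nu(\d z)\d s\big)^{1/2}$ directly and shortens that step), followed by Young's inequality, localization at $\tau_n$ and monotone convergence to absorb $\E\sup_t V_\beta(X_t)$. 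Both arguments ultimately rest on \eqref{E12} and on a maximal inequality for compensated Poisson integrals; yours avoids the block combinatorics and independence bookkeeping at the price of the two-regime estimate for increments of $V_\beta$ and the absorption/localization argument, while the paper's per-block estimates are elementary $L^2$ computations but require the careful conditional formulation and the recombination over $N_T$. Two minor remarks: the two-sided Taylor bound you state at the outset is not actually needed (the one-sided bound coming from concavity of $r\mapsto(1+r)^{\beta/2}$, as in \eqref{E3}, suffices for the drift part, and your martingale estimates only use increments of $V_\beta$); and, as you note, $\E V_\beta(X_{T\wedge\tau_n})$ must be obtained from the stopped drift inequality plus Gronwall rather than from \eqref{E12}, since $T\wedge\tau_n$ is a stopping time --- your localization handles this correctly because before $\tau_n$ the state is bounded by $n$, making the stopped compensated integrals true martingales under (${\bf H}_2'$).
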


Before we move
 to the next subsection, we make the following remark.
\begin{remark}\label{R:2.3}\rm
In
literature, there are several ways to show strong well-posedness of SDEs under consideration. In particular, as long as  the weak existence and the pathwise uniqueness are available, the strong well-posedness can be derived by leveraging on the Yamada-Watanabe theorem; see \cite{JJ} for classical SDEs driven by semimartingales.
Concerning the aforementioned method, one needs to examine  tightness of the solution processes associated with the   approximated  SDEs.  To this end, in general,  some growth conditions  $($with respect to the state variable$)$ related to  coefficients are  imposed to show
the  equicontinuity in probability.
Nevertheless, by adopting the procedure in the present work, the growth condition on each coefficient can be neglected.
\end{remark}

\subsection{Well-posedness of SDEs with small jumps}
In this subsection, we adopt a two-step
strategy  to explore the well-posedness of \eqref{e}. Firstly,
 we establish   the well-posedness of
the SDE (without big jumps): for any $t>0,$
\begin{equation}\label{E1}
\d Y_t=b_t(Y_t)\,\d t+\displaystyle\int_U f_t(Y_t,z)\,\wt N(\d t,\d z),\quad Y_0=X_0.
\end{equation}
Afterwards, the well-posedness of \eqref{e} can be tackled  by   splicing together big jumps via
the so-called interlacing technique (see e.g. \cite[p.\ 112--113]{AD} and \cite[p.\ 244--246]{IW}).

\begin{proposition}\label{pro1}
Under Assumptions of Theorem $\ref{thm}$ with $g\equiv{\bf0}$, \eqref{E1} has a unique strong solution $(Y_t)_{t\ge0}$ satisfying that,
  for any $T>0,$ there exists a constant $C_T>0$ such that
\begin{align}\label{moment1}
\sup_{0\le t\le T}\E|Y_t|^\beta\le C_T (1+\E|Y_0|^\beta ).
\end{align}
\end{proposition}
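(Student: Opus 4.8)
The plan is to establish the three assertions of Proposition~\ref{pro1} — the a priori bound \eqref{moment1}, pathwise uniqueness, and existence — separately and in that order, adapting to the pure small-jump setting the scheme used for monotone diffusions in \cite[Theorem~3.1.1]{PR}. For the moment bound, suppose a solution $(Y_t)_{t\ge0}$ is given, set $\La_t:=\exp\!\big(-\!\int_0^t\varphi(s)\,\d s\big)$ and $\tau_R:=\inf\{t\ge0:|Y_t|\ge R\}$, and note that for \eqref{E1} Assumption $({\bf H}_2)$ reads $2\<x,b_t(x)\>+\nu(|f_t(x,\cdot)|^2\I_U)\le\varphi(t)(1+|x|^2)$. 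It\^o's formula applied to $|Y_t|^2$ — whose It\^o correction for the compensated Poisson integral is exactly $\nu(|f_t(Y_t,\cdot)|^2\I_U)\,\d t$ — and then to $\La_t(1+|Y_t|^2)$ shows, via $({\bf H}_2)$, that $\La_t(1+|Y_t|^2)$ has nonpositive drift; composing with the concave nondecreasing map $x\mapsto x^{\be/2}$ (here $\be\in(0,2]$) gives the same conclusion for $\La_t^{\be/2}(1+|Y_t|^2)^{\be/2}$, which after stopping at $\tau_R$ is a genuine nonnegative supermartingale whose value at $t=0$, namely $(1+|Y_0|^2)^{\be/2}$, is integrable since $Y_0\in L^\be$. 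Hence $\E\big[\La_{t\wedge\tau_R}^{\be/2}(1+|Y_{t\wedge\tau_R}|^2)^{\be/2}\big]\le\E\big[(1+|Y_0|^2)^{\be/2}\big]\le 1+\E|Y_0|^\be$, and letting $R\to\8$ via Fatou's lemma yields \eqref{moment1}; the same computation, used in stopped form, also bounds $\P(\tau_R<T)$, which is what will rule out explosion for the approximating equations below.

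Pathwise uniqueness is then routine with a localization: for two solutions $Y,Y'$ with $Y_0=Y_0'$, It\^o's formula applied to $|Y_{t\wedge\si_R}-Y'_{t\wedge\si_R}|^2$, with $\si_R:=\inf\{t\ge0:|Y_t|\vee|Y'_t|\ge R\}$, gives a finite-variation part equal to $\int_0^{t\wedge\si_R}\big(2\<Y_s-Y'_s,b_s(Y_s)-b_s(Y'_s)\>+\nu(|f_s(Y_s,\cdot)-f_s(Y'_s,\cdot)|^2\I_U)\big)\,\d s$, which by $({\bf H}_1)$ is at most $\int_0^t K_s(R)\,|Y_{s\wedge\si_R}-Y'_{s\wedge\si_R}|^2\,\d s$; taking expectations and using Gronwall's lemma (recall $K_\cdot(R)\in L^1_{\mathrm{loc}}$) forces $Y_{t\wedge\si_R}=Y'_{t\wedge\si_R}$ a.s., and since solutions do not explode, $\si_R\to\8$ a.s.\ and hence $Y\equiv Y'$.

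For existence, the delicate point is that $({\bf H}_1)$ supplies only \emph{local weak monotonicity}, not even local Lipschitz continuity, so a direct Picard iteration on \eqref{E1} is not available; instead I would regularize in two stages. First, multiply $b_t,f_t$ by a smooth spatial cutoff equal to $1$ on $B_n$ and vanishing off $B_{n+1}$: this preserves $({\bf H}_2)$ (scaling a drift by a factor in $[0,1]$ keeps it dissipative, and $\chi_n^2\le\chi_n$ takes care of the $f$-term) and yields bounded, continuous, compactly supported coefficients. Then mollify these in the space variable at a scale $\de_n\downarrow0$ chosen fine enough — relative to the cutoff radius and to the quantity appearing in $({\bf H}_3)$ — that the resulting coercivity function remains uniformly locally integrable in $n$. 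The regularized coefficients $b^n,f^n$ are then globally Lipschitz, so \eqref{E1} with these coefficients admits a unique strong solution $Y^n$ by classical Picard iteration, and the argument of the first paragraph gives a moment bound for $Y^n$ uniform in $n$, whence $\sup_n\P(\si^n_R<T)\to0$ as $R\to\8$, with $\si^n_R:=\inf\{t\ge0:|Y^n_t|\ge R\}$. To pass to the limit, apply It\^o's formula to $|Y^n_{t\wedge\si}-Y^m_{t\wedge\si}|^2$ with $\si:=\inf\{t\ge0:|Y^n_t|\vee|Y^m_t|\ge R\}$, and split the drift and jump differences as $b^n(Y^n)-b^n(Y^m)$ plus $b^n(Y^m)-b^m(Y^m)$ (and likewise for $f$): the first pair is controlled by $({\bf H}_1)$ applied to the \emph{original} coefficients at the points $Y^n_s,Y^m_s$ — because $b^n=b$ and $f^n=f$ on $B_n$ once $n$ is large — while the second pair tends to $0$ in $L^2$ on $\{|Y^m|\le R\}$ by the local uniform convergence of the regularized coefficients together with $({\bf H}_3)$. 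Gronwall's lemma then makes $(Y^n)_n$ Cauchy in probability, uniformly on $[0,T]$; the limit $Y$ is c\`adl\`ag, and passing to the limit in the integral form of \eqref{E1} — using the local uniform convergence of the coefficients and the uniform moment bound to handle the drift, the compensated-Poisson integral (through the It\^o isometry) and the compensator — shows that $Y$ solves \eqref{E1}, and Fatou delivers \eqref{moment1} for $Y$.

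The step I expect to be the main obstacle is this passage to the limit. Because only \emph{local} weak monotonicity is in force, every estimate must be carried through the localizing stopping times and then closed off using the uniform non-explosion bound; simultaneously, the two-stage regularization has to be arranged so that the approximate coefficients are at once globally Lipschitz, coercive with a constant uniform in $n$, and locally uniformly convergent to $b,f$. The jump terms compound this — It\^o's formula generates compensator integrals involving second differences of the test functions — and throughout they must be dominated using only the standing $L^2(\nu)$-integrability of $f$ and the local integrability $({\bf H}_3)$, never a pointwise growth bound on the coefficients, consistent with Remark~\ref{R:2.3}.
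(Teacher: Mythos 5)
Your plan is essentially correct, but it takes a genuinely different route from the paper's. The paper never regularizes the coefficients: it runs a time-discretized Picard/Euler scheme on \eqref{E1} itself (the iterates \eqref{E2}, with coefficients frozen at $t_n=\lfloor tn\rfloor/n$), shows the increments $p^{(n)}_t$ vanish in probability (Lemma \ref{lem1}), rules out explosion uniformly in $n$ via the Lyapunov function $V_\be$ with the discount $\e^{-\be\int_0^\cdot\varphi(s)\d s}$ (Lemma \ref{lem2}), proves the scheme is Cauchy uniformly in probability by combining \eqref{E5} with the vanishing increments (Lemma \ref{Lem3}), and then passes to the limit using continuity of the coefficients, (${\bf H}_3$) and dominated convergence; uniqueness and \eqref{moment1} come from the same localized monotonicity/Gronwall and $V_\be$ computations you describe (the paper's Taylor/concavity argument for $F_\be$ is your concave-composition step in different clothing). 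The advantage of the paper's scheme is that the approximating coefficients \emph{are} the original $b_t,f_t$, so (${\bf H}_1$)--(${\bf H}_3$) are used verbatim; your mollification route buys classical Lipschitz solvability of the approximations, at the price of re-verifying the hypotheses for the regularized coefficients uniformly in $n$.

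That is exactly where your write-up has the one step that would fail as stated: after mollifying at scale $\de_n$ it is \emph{not} true that $b^n=b$ and $f^n=f$ on $B_n$, so you cannot invoke (${\bf H}_1$) ``applied to the original coefficients at the points $Y^n_s,Y^m_s$''. What you actually need -- and what is true, but must be proved -- is that mollification preserves the local weak monotonicity \eqref{E5}: since the cutoff is $\equiv1$ on the relevant ball, $b^n_t(x)-b^n_t(y)=\int\big(b_t(x-u)-b_t(y-u)\big)\rho_{\de_n}(u)\,\d u$, and using $(x-u)-(y-u)=x-y$ in the drift pairing together with Jensen and Fubini for $\nu(|f^n_t(x,\cdot)-f^n_t(y,\cdot)|^2\I_U(\cdot))$ yields \eqref{E5} for $(b^n,f^n)$ on $B_R$ with constant $K_t(R+1)$, uniformly in large $n$; with this lemma inserted, your Cauchy estimate goes through. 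Two smaller points to tighten: (i) Gronwall only controls the stopped $L^2$ difference at each fixed $t$, so to get Cauchy \emph{uniformly} on $[0,T]$ in probability you should stop additionally when the difference first exceeds $\vv$, as the paper does with $\tau_R^{n,m,\vv}$; (ii) in the uniqueness and Cauchy steps the process may overshoot $R$ by a jump at the stopping time, so integrability of the stopped second moments must be drawn from (${\bf H}_3$) rather than from $|Y_{t\wedge\si_R}|\le R$, and the Lipschitz constants of the mollified jump coefficient are only time-dependent and integrable, so the ``classical Picard'' you invoke must be the version allowing such constants. None of this changes the verdict that your strategy works, but the mollification-preserves-monotonicity lemma is the missing ingredient, not the identity you stated.
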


To address  the well-posedness of \eqref{E1}, we appeal to the Picard iteration approach. So, in the sequel,
  we work with the following iterated SDE: for any $t>0$ and integer $n\ge1,$
\begin{align}\label{E2}
\d Y_t^{(n)}=b_t(Y_{t_n}^{(n)})\,\d t+\int_U f_t(Y_{t_n}^{(n)},z)\,\wt N(\d t,\d z),\quad  Y_0^{(n)}=Y_0,
\end{align}
where $t_n:=\lfloor tn\rfloor /n$ with $\lfloor\cdot\rfloor$ being the floor function.
Below, for the notation brevity, we set
\begin{align*}
 p_t^{(n)}:=Y_{t_n}^{(n)}-Y_t^{(n)}\quad \mbox{ and }  \quad p_{t-}^{(n)}:=Y_{t_n}^{(n)}-Y_{t-}^{(n)}.
\end{align*}
Additionally, for any $R>0,$ we define the stopping time $\tau_R^{(n)}$ by
\begin{align*}
\tau_R^{(n)}=\inf\{t\ge0:|Y_t^{(n)}|>R/2\}.
\end{align*}

To accomplish the proof of Proposition \ref{pro1}, we prepare several preliminary  lemmas, where  the following  one shows that,
 for fixed $R>0,$
$\I_{(0,\tau_R^{(n)}]}(t) |p_t^{(n)}|\to0$ in probability as $n\to\8$.

\begin{lemma}\label{lem1}
Under $({\bf H}_3)$, for any   $R,\vv>0$,
\begin{align}\label{E4}
\lim_{n\to\8}\P\big(|p_t^{(n)}|\ge\vv,0<t\le \tau_R^{(n)}\big)=0.
\end{align}
\end{lemma}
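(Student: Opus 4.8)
The plan is to prove \eqref{E4} by first reducing the increment $p_t^{(n)} = Y_{t_n}^{(n)} - Y_t^{(n)}$ to an explicit stochastic representation coming from \eqref{E2}, and then estimating its size uniformly over the interval $[t_n, t]$ on which the driving coefficients are frozen. Since $Y^{(n)}$ evolves according to $\d Y_s^{(n)} = b_s(Y_{s_n}^{(n)})\,\d s + \int_U f_s(Y_{s_n}^{(n)},z)\,\wt N(\d s,\d z)$, integrating from $t_n$ to $t$ gives
\begin{align*}
p_t^{(n)} = -\Big( \int_{t_n}^{t} b_s(Y_{s_n}^{(n)})\,\d s + \int_{t_n}^{t}\!\!\int_U f_s(Y_{s_n}^{(n)},z)\,\wt N(\d s,\d z) \Big),
\end{align*}
and on the stochastic interval $(0,\tau_R^{(n)}]$ we have $|Y_{s_n}^{(n)}| \le R/2 \le R$ for all relevant $s$, so the coefficients are controlled by the local bounds in $({\bf H}_3)$: write $B_s(R) := \sup_{|x|\le R}|b_s(x)|$ and $F_s(R) := \int_U \sup_{|x|\le R}|f_s(x,z)|^2\,\nu(\d z)$, both of which are integrable in $s$ over $[0,T]$ by $({\bf H}_3)$.

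The next step is a two-part estimate. For the drift part, $\int_{t_n}^{t} B_s(R)\,\d s \le \int_{(t_n)\wedge T}^{t\wedge T} B_s(R)\,\d s$, which tends to $0$ as $n\to\8$ because $|t - t_n| \le 1/n$ and $s \mapsto B_s(R)$ is integrable (absolute continuity of the Lebesgue integral). For the martingale part, I would stop the process appropriately — replacing $t$ by $t \wedge \tau_R^{(n)}$ — so that the compensated integral becomes a genuine $L^2$-martingale, and apply the Itô isometry for compensated Poisson integrals:
\begin{align*}
\E\Big| \int_{t_n\wedge\tau_R^{(n)}}^{t\wedge\tau_R^{(n)}}\!\!\int_U f_s(Y_{s_n}^{(n)},z)\,\wt N(\d s,\d z) \Big|^2 \le \E \int_{t_n\wedge\tau_R^{(n)}}^{t\wedge\tau_R^{(n)}} F_s(R)\,\d s \le \int_{(t_n)\wedge T}^{t\wedge T} F_s(R)\,\d s,
\end{align*}
which again vanishes as $n\to\8$ by integrability of $F_{\cdot}(R)$. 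Combining both bounds via Markov's inequality on the event $\{|p_t^{(n)}|\ge\vv,\ 0 < t \le \tau_R^{(n)}\}$ — on which the stopped and unstopped integrals coincide — yields $\P(|p_t^{(n)}|\ge\vv,\ 0<t\le\tau_R^{(n)}) \le \vv^{-1}\int_{(t_n)\wedge T}^{t\wedge T} B_s(R)\,\d s + \vv^{-2}\int_{(t_n)\wedge T}^{t\wedge T} F_s(R)\,\d s \to 0$.

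The main technical point to handle carefully is the interplay between the stopping time $\tau_R^{(n)}$ and the freezing index $t_n$: the event in \eqref{E4} asks that $|p_t^{(n)}|\ge\vv$ holds while $0 < t \le \tau_R^{(n)}$, and one must ensure that on this event the value $Y_{s_n}^{(n)}$ appearing inside the coefficients indeed stays in the ball of radius $R$ for all $s \in [t_n, t]$ — which it does since $s_n \le t_n \le t \le \tau_R^{(n)}$ forces $|Y_{s_n}^{(n)}| \le R/2$ by left-continuity of $s \mapsto |Y_{s_n}^{(n)}|$ at jump times being irrelevant (the floor-sampled process only takes values already attained). A clean way to make this rigorous is to introduce the auxiliary stopped processes and note that $(t\wedge\tau_R^{(n)})_n \le t_n$, so that on $\{0 < t \le \tau_R^{(n)}\}$ one has $t\wedge\tau_R^{(n)} = t$ and the frozen coefficient is evaluated at a point of norm $\le R/2$; the isometry is then applied to the $\tau_R^{(n)}$-stopped martingale, which is honestly square-integrable because $F_{\cdot}(R)$ is integrable. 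No moment assumptions on $X_0$ are needed here — this lemma is purely a statement about the oscillation of the Picard iterate between sampling times, and $({\bf H}_3)$ is exactly the hypothesis that supplies the required uniform integrability of the local coefficient bounds.
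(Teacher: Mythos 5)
Your proposal is correct and follows essentially the same route as the paper: write $p_t^{(n)}$ as the drift integral plus the compensated Poisson integral over $[t_n,t]$, localize with $\tau_R^{(n)}$ so the frozen argument stays in the ball where $({\bf H}_3)$ applies, bound the two pieces by Chebyshev's inequality and the It\^o isometry, and conclude from $|t-t_n|\le 1/n$ together with the local integrability of $\sup_{|x|\le R}|b_\cdot(x)|$ and $\int_U\sup_{|x|\le R}|f_\cdot(x,z)|^2\nu(\d z)$. The only slip is cosmetic: splitting the event $\{|p_t^{(n)}|\ge\vv\}$ into drift and martingale parts at level $\vv/2$ gives constants $2/\vv$ and $4/\vv^2$ rather than your $\vv^{-1}$ and $\vv^{-2}$, which does not affect the convergence to zero.
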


\begin{proof}
From \eqref{E2}, we obviously have for any $t\ge0,$
\begin{align*}
p_t^{(n)}=-\int_{t_n}^tb_s(Y_{s_n}^{(n)})\,\d s -\int_{t_n}^t\int_U f_s(Y_{s_n}^{(n)},z)\,\wt N(\d s,\d z).
\end{align*}
This thus  implies that for any  $\vv>0,$
\begin{align*}
\P\big(|p_t^{(n)}|\ge\vv,0<t\le \tau_R^{(n)}\big)
&\le \P\bigg( \int_0^{t\wedge\tau_R^{(n)}}|b_s(Y_{s_n}^{(n)})|\I_{[t_n,t]}(s)\,\d s \ge\frac{\vv}{2}\bigg)\\
&\quad+\P\bigg( \bigg|\int_0^{t\wedge\tau_R^{(n)}}\int_U f_s(Y_{s_n}^{(n)},z)\I_{[t_n,t]}(s) \, \wt N(\d s,\d z)\bigg|\ge\frac{\vv}{2}\bigg)\\
&=:\Gamma_1(t,n,R,\vv)+\Gamma_2(t,n,R,\vv)
.
\end{align*}
On the one hand, via   Chebyshev's inequality and the definition of $\tau_R^{(n)}$, we deduce that
\begin{equation}\label{E7}
\begin{split}
\Gamma_1(t,n,R,\vv)
\le&\frac{2}{\vv}\E\bigg(\int_0^{t\wedge\tau_R^{(n)}}|b_s(Y_{s_n}^{(n)})|\I_{[t_n,t]}(s)\,\d s \bigg)
\le \frac{2}{\vv}\int_{t_n}^t\sup_{\{|x|\le R/2\}}|b_s(x)|\,\d s.
\end{split}
\end{equation}
On the other hand, using Chebyshev's inequality once more followed by It\^o's isometry and the notion of $\tau_R^{(n)}$ yields that
\begin{equation}\label{E8}
\begin{split}
\Gamma_2(t,n,R,\vv)&\le \frac{4}{\vv^2}\E\bigg|\int_0^{t\wedge\tau_R^{(n)}}\int_U f_s(Y_{s_n}^{(n)},z) \I_{[t_n,t]}(s) \,\wt N(\d s,\d z)\bigg|^2\\
&=\frac{4}{\vv^2}\E\bigg(\int_0^{t\wedge\tau_R^{(n)}}\int_U |f_s(Y_{s_n}^{(n)},z)|^2\I_{[t_n,t]}(s) \,\nu(\d z) \d s\bigg)\\
&\le  \frac{4}{\vv^2}\int_{t_n}^t \int_U \sup_{\{|x|\le R/2\}}|f_s(x,z)|^2 \,\nu(\d z) \d s.
\end{split}
\end{equation}
Therefore,   \eqref{E4} is valid  by
combining \eqref{E7} with \eqref{E8}, and taking $({\bf H}_3)$  into account.
\end{proof}

Roughly speaking,
the next lemma   indicates that the life time of $(Y_t^{(n)})_{t\ge0}$ goes to infinity.
\begin{lemma}\label{lem2}
Assume that  $({\bf H}_{2})$ with $g\equiv{\bf0}$
 and $({\bf H}_{3})$ holds true, and suppose    $Y_0\in L^\beta(\Omega\to\R^d,\mathscr F_0,\P)$.  Then,  for any fixed $T>0,$
\begin{align}\label{E9}
\lim_{R\to\8}\lim_{n\to\8}\P(\tau_R^{(n)}\le T)=0.
\end{align}
\end{lemma}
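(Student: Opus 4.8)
The plan is to derive, via It\^o's formula and the weak coercivity $({\bf H}_2)$, a Gronwall estimate for $\E(1+|Y^{(n)}_{t\wedge\tau_R^{(n)}}|^2)^{\beta/2}$ that becomes uniform in $R$ once $n\to\8$, and then to conclude with Chebyshev's inequality, using Lemma \ref{lem1} to dispose of the time-discretization error. Fix $T,R>0$, write $\tau:=\tau_R^{(n)}$ and $\psi(x):=(1+|x|^2)^{\beta/2}$, and apply It\^o's formula to $\psi(Y^{(n)}_{t\wedge\tau})$ (against an auxiliary localizing sequence: since only square-integrability of $f$ in the small jumps is assumed, the integral against $\wt N$ need not be a true martingale, and this localization is removed at the end by Fatou's lemma). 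Using $\nabla\psi(x)=\beta(1+|x|^2)^{\beta/2-1}x$ and $1+|x+h|^2=(1+|x|^2)+2\langle x,h\rangle+|h|^2$, one checks that $\psi(x+h)-\psi(x)-\langle\nabla\psi(x),h\rangle$ equals $\tfrac{\beta}{2}(1+|x|^2)^{\beta/2-1}|h|^2$ minus a quantity that is nonnegative by concavity of $t\mapsto t^{\beta/2}$ (here $\beta\le2$, not $\beta\in[1,2]$, is what is used). Hence, after It\^o, the generator term coming from the jumps is at most $\tfrac{\beta}{2}(1+|Y^{(n)}_{s-}|^2)^{\beta/2-1}\nu(|f_s(Y^{(n)}_{s_n},\cdot)|^2\I_U(\cdot))$, while the drift term equals $\beta(1+|Y^{(n)}_{s-}|^2)^{\beta/2-1}\langle Y^{(n)}_{s-},b_s(Y^{(n)}_{s_n})\rangle$.

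The crux is to invoke $({\bf H}_2)$, which controls $\langle x,b_s(x)\rangle$ and $\nu(|f_s(x,\cdot)|^2\I_U(\cdot))$ only when both are evaluated at the \emph{same} point, whereas above the coefficients are frozen at $Y^{(n)}_{s_n}$ while the prefactor $(1+|\cdot|^2)^{\beta/2-1}$ and the inner-product slot carry $Y^{(n)}_{s-}$. On $(0,\tau]$ one has $|Y^{(n)}_{s-}|\vee|Y^{(n)}_{s_n}|\le R/2$, and the maps $x\mapsto(1+|x|^2)^{\beta/2-1}x$ and $x\mapsto(1+|x|^2)^{\beta/2-1}$ are Lipschitz on $\{|x|\le R/2\}$, so replacing $Y^{(n)}_{s-}$ by $Y^{(n)}_{s_n}$ in these places costs only a remainder bounded by $C_R|p^{(n)}_{s-}|\big(\sup_{|x|\le R/2}|b_s(x)|+\int_U\sup_{|x|\le R/2}|f_s(x,z)|^2\,\nu(\d z)\big)$. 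After the replacement, $({\bf H}_2)$ with $g\equiv{\bf0}$ gives $\langle Y^{(n)}_{s_n},b_s(Y^{(n)}_{s_n})\rangle+\tfrac12\nu(|f_s(Y^{(n)}_{s_n},\cdot)|^2\I_U(\cdot))\le\tfrac12\varphi(s)(1+|Y^{(n)}_{s_n}|^2)$, and since $(1+|Y^{(n)}_{s_n}|^2)^{\beta/2}\le 2^{(\beta-1)^+}\big(\psi(Y^{(n)}_{s-})+|p^{(n)}_{s-}|^\beta\big)$, I arrive, on $(0,\tau]$, at
\[
\psi(Y^{(n)}_{t\wedge\tau})\le\psi(Y_0)+c_\beta\int_0^{t\wedge\tau}\varphi(s)\,\psi(Y^{(n)}_{s-})\,\d s+\int_0^{t\wedge\tau}\mathcal E^{(n)}_s\,\d s+(\text{local martingale}),
\]
where on $(0,\tau]$ one has $\mathcal E^{(n)}_s\le C_R|p^{(n)}_{s-}|\,\Phi_s+c_\beta\varphi(s)|p^{(n)}_{s-}|^\beta$ with a deterministic, $\d s$-integrable $\Phi_s$ supplied by $({\bf H}_3)$, and $|p^{(n)}_{s-}|\le R$.

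Taking expectations (removing the localization by Fatou), put $u_n(t):=\E\psi(Y^{(n)}_{t\wedge\tau})$; using $\psi(Y^{(n)}_{s-})\,\I_{(0,\tau]}(s)\le\psi(Y^{(n)}_{s\wedge\tau})$ for a.e.\ $s$, one gets $u_n(t)\le\E\psi(Y_0)+\delta_n(R,T)+c_\beta\int_0^t\varphi(s)\,u_n(s)\,\d s$ for $t\in[0,T]$, with $\delta_n(R,T):=\E\int_0^{T\wedge\tau}\mathcal E^{(n)}_s\,\d s$. On $(0,\tau]$ the integrand of $\delta_n$ is dominated by the deterministic, $\d s$-integrable function $R\,C_R\Phi_s+c_\beta R^{\beta}\varphi(s)$, while $\sup_{0<s\le\tau}|p^{(n)}_s|\to0$ in probability by Lemma \ref{lem1}; hence dominated convergence yields $\delta_n(R,T)\to0$ as $n\to\8$, for each fixed $R,T$. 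Gronwall's lemma then gives $\sup_{t\le T}u_n(t)\le\big(\E\psi(Y_0)+\delta_n(R,T)\big)e^{c_\beta\int_0^T\varphi(s)\,\d s}$, and since $\{\tau_R^{(n)}\le T\}\subseteq\{|Y^{(n)}_{T\wedge\tau}|\ge R/2\}$, Chebyshev's inequality gives
\[
\P\big(\tau_R^{(n)}\le T\big)\le\frac{u_n(T)}{(1+R^2/4)^{\beta/2}}\le\frac{\big(\E(1+|Y_0|^2)^{\beta/2}+\delta_n(R,T)\big)\,e^{c_\beta\int_0^T\varphi(s)\,\d s}}{(1+R^2/4)^{\beta/2}}.
\]
Sending $n\to\8$ eliminates $\delta_n(R,T)$ and leaves the $R$-independent, finite numerator $\E(1+|Y_0|^2)^{\beta/2}\,e^{c_\beta\int_0^T\varphi(s)\,\d s}$ (this is where $Y_0\in L^\beta$ enters), and then sending $R\to\8$ gives \eqref{E9}. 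The main obstacle is precisely the bookkeeping of the second paragraph — transferring the one-point weak coercivity $({\bf H}_2)$ onto the two-point frozen scheme — and the key structural feature is that the inescapable $R$-dependent error $\delta_n(R,T)$ is killed by letting $n\to\8$ \emph{before} $R\to\8$, which is exactly why the limits in \eqref{E9} are taken in that order.
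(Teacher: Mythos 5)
Your proof is correct and follows essentially the same route as the paper's: It\^o's formula applied to $V_\beta(x)=(1+|x|^2)^{\beta/2}$ along the stopped Euler scheme, with the weak coercivity $({\bf H}_2)$ transferred to the frozen point $Y^{(n)}_{s_n}$ at the cost of an error proportional to $|p^{(n)}_{s-}|$ times $({\bf H}_3)$-integrable quantities, this error being killed via Lemma \ref{lem1} and dominated convergence by sending $n\to\infty$ before $R\to\infty$, and Chebyshev's inequality concluding (the paper's exponential weight in place of Gronwall, its algebraic split $\langle x,b_t(y)\rangle=\langle y,b_t(y)\rangle+\langle x-y,b_t(y)\rangle$ in place of your local Lipschitz replacement, and its auxiliary stopping level $R/4$ are only cosmetic differences). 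The one step to phrase more carefully is the final limit: Lemma \ref{lem1} gives convergence of $|p^{(n)}_t|$ to $0$ in probability for each fixed $t$, not of $\sup_{0<s\le\tau_R^{(n)}}|p^{(n)}_s|$, so you should argue as the paper does, via Fubini, the pointwise-in-$s$ convergence combined with the bound $|p^{(n)}_{s-}|\le R$ on $(0,\tau_R^{(n)}]$, and then dominated convergence in time, which your deterministic dominating function $R\,C_R\Phi_s+c_\beta R^{\beta}\varphi(s)$ makes immediate.
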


\begin{proof}
Apparently, we have
\begin{align}\label{E0}
\big\{\tau_R^{(n)}\le T\big\}=\bigg\{\tau_R^{(n)}\le T,\sup_{t\in[0,\tau_R^{(n)}]
  }|Y_t^{(n)}|\ge R/4\bigg\}+\bigg\{\tau_R^{(n)}\le T,\sup_{t\in[0,\tau_R^{(n)}]
 }|Y_t^{(n)}|< R/4\bigg\}.
\end{align}
In terms of the definition of $\tau_R^{(n)}$, it is obvious that the second event on the right hand side of \eqref{E0} is empty. Hence, the following implication and equivalence
\begin{align*}
\big\{\tau_R^{(n)}\le T\big\}\subseteq\bigg\{\sup_{t\in[0,T\wedge\tau_R^{(n)}]}|Y_t^{(n) }|\ge R/4 \bigg\}=\bigg\{\big|Y_{\tau_R^{(n),*}}^{(n) }\big|\ge R/4\bigg\}
\end{align*}
are available,
where
$$
\tau_R^{(n),*}:=T\wedge\tau_R^{(n)}\wedge\inf\big\{t\ge0:|Y_t^{(n)}|\ge R/4\big\}.
$$
By means of Chebyshev's inequality, one has
\begin{align*}
\P\big(\tau_R^{(n)}\le T\big)\le \frac{1}{(1+(R/4)^2)^{{\beta}/{2}}}\E V_{\beta}\big(Y_{\tau_R^{(n),*}}^{(n)}\big),
\end{align*}
where
$V_{\beta}(x):=(1+|x|^2)^{{\beta}/{2}}$ for all $ x\in\R^d.$
 Next, we define a function below:
\begin{align}\label{E*}
 \Psi_t(z_1,z_2)=\big(\varphi(t)  |z_1|  + |b_t(z_2) | \big)|z_1|,\quad t\ge0, z_1,z_2\in\R^d.
 \end{align}
As long as there exists a constant $ \bar{C}_T>0$ such that
\begin{align}\label{E10}
\E V_{\beta}\big(Y_{\tau_R^{(n),*}}^{(n)} \big) \le \bar{C}_T\bigg(  \E V_{\beta}(Y_0)+ \E\Big(\int_0^{\tau_R^{(n),*}} \Psi_s\big(p_{s-}^{(n)},Y_{s_n}^{(n)}\big)  \,\d s\Big)\bigg)
\end{align}
and
\begin{align}\label{E11}
\lim_{n\to\8}\E\Big(\int_0^{\tau_R^{(n),*}} \Psi_s\big(p_{s-}^{(n)},Y_{s_n}^{(n)}\big) \,\d s\Big)=0,
\end{align}
the statement  \eqref{E9} is verifiable   by approaching firstly $n\to\8$ followed by sending $R\to\8.$ Based on the preceding analysis, it amounts to verifying respectively \eqref{E10} and \eqref{E11} for the establishment of \eqref{E9}.

Applying the It\^o formula, we deduce from  \eqref{E2} that
\begin{align*}
\d\Big(\e^{-\beta\int_0^t\varphi(s)\,\d s}V_{\beta}\big(Y_t^{(n)}\big)\Big)=&\e^{-\beta\int_0^t\varphi(s)\,\d s}\Big(-\beta\varphi(t)V_{\beta}\big(Y_t^{(n)}\big)+(\mathscr L_t^0 V_{\beta})\big(Y_t^{(n)},Y_{t_n}^{(n)}\big) \Big)\,\d t\\
&+\d M_t^{(n),\beta },
\end{align*}
where $(M_t^{(n),\beta })_{t\ge0}$ is a local martingale, and for $h\in C^2(\R^d)$,
  $x,y\in\R^d$ and $t\ge0,$
\begin{equation}\label{KL}
\begin{split}
(\mathscr L_t^0h)(x,y):=&\<\nn h(x), b_t(y)\>\\
&+\nu\big((h(x+f_t(y,\cdot))-h(x)-\<\nn h(x),f_t(y,\cdot)\>)\I_U(\cdot)\big).
\end{split}
\end{equation}
For $F_\beta(r):=(1+r)^{\frac{\beta}{2}}$ on $[0,\infty)$,
note readily that  $F_\beta'(r)=\frac{\beta}{2}(1+r)^{\frac{\beta}{2}-1}$ and $F_\beta''(r)<0$
for $\beta\in(0,2].$
Thus, an application of Taylor's expansion yields that  for any $x,y\in\R^d$ and $t\ge0,$
\begin{equation}\label{E3}
\begin{split}
(\mathscr L_t^0V_{\beta})(x,y)&\le  \frac{1}{2}\beta(1+|x|^2)^{\frac{\beta}{2}-1}\big(2\<x,b_t(y)\>+\nu( |f_t(y,\cdot)|^2\I_U) \big)\\
&=\frac{1}{2}\beta(1+|x|^2)^{\frac{\beta}{2}-1}\big(2\<y,b_t(y)\>+\nu( |f_t(y,\cdot)|^2\I_U) \big)\\
&\quad+\frac{1}{2}\beta(1+|x|^2)^{\frac{\beta}{2}-1}\<x-y,b_t(y)\>\\
&\le \frac{1}{2}\beta \varphi(t)(1+|x|^2)^{\frac{\beta}{2}-1}(1+|y|^2)+ |x-y|\cdot|b_t(y)|\\
&\le \beta \varphi(t)V_{\beta}(x)+\big(2 \varphi(t)|x-y|+|b_t(y)|\big)|x-y|,
\end{split}
\end{equation}
where in the second inequality we used $({\bf H}_{2 })$ with $g\equiv{\bf0}$ and $\beta\in(0,2]$, and in the last display we utilized $\beta\in(0,2]$ and
$|y|^2\le 2|x|^2+2|x-y|^2$. Subsequently, \eqref{E3}, besides $\varphi>0,$
 implies that
\begin{equation}\label{GH}
\begin{split}
\e^{-\beta\int_0^T\varphi(s)\,\d s}\E V_{\beta}\big(Y_{\tau_R^{(n),*}}^{(n)} \big)
&\le\E\Big(\e^{-\beta\int_0^{\tau_R^{(n),*}}\varphi(s)\,\d s} V_{\beta}\big(Y_{\tau_R^{(n),*}}^{(n)} \big)\Big)\\
&\le  \E V_{\beta}(Y_0)
+2\E\Big(\int_0^{\tau_R^{(n),*}}  \Psi_s\big(p_{s-}^{(n)},Y_{s_n}^{(n)}\big) \,\d s\Big),
\end{split}
\end{equation}
where $\Psi$ was defined in \eqref{E*}.
As a result, \eqref{E10} is attainable by making use of the locally integrable property of $\varphi$.

We proceed to verify \eqref{E11}. Due to the definition of $\tau_R^{(n)}$, we
infer  that for any $t\in(0,\tau_R^{(n),*}],$
\begin{align*}
\Psi_t\big(p_{t-}^{(n)},Y_{t_n}^{(n)}\big)
\le \Lambda(t,R)|p_{t-}^{(n)}|\le R\Lambda(t,R),
\end{align*}
where
$
\Lambda(t,R):= R
\varphi(t)
+\sup_{|x|\le R/2} |b_t(x)|.
$
Hence,  by virtue of $\tau_R^{(n),*}\le T\wedge\tau_R^{(n)}$,
it is easy to see that   for any integer $m\ge1$,
\begin{equation}\label{e:addcomment}\begin{split}
\int_0^{\tau_R^{(n),*}}\Psi_s\big(p_{s-}^{(n)},Y_{s_n}^{(n)}\big) \,\d s
&\le \int_0^{\tau_R^{(n),*}}\Lambda(s,R)|p_{s-}^{(n)}| \,\d s\\
&\le m\int_0^{\tau_R^{(n) }} |p_{s-}^{(n)}| \,\d s +  R \int_0^T\Lambda(s,R) \I_{\{\Lambda(s,R)> m\}} \,\d s.\end{split}
\end{equation}
Next, Lemma \ref{lem1}, Fubini's theorem as well as the dominated convergence theorem yield that
\begin{align*}
\lim_{n\to\8}\E\Big(\int_0^{\tau_R^{(n) }} |p_{s-}^{(n)}| \,\d s\Big)=0.
\end{align*}
Consequently,  \eqref{E11} is achievable  by
exploiting the locally integrable property of $t\mapsto \Lambda(t,R)$, thanks to $({\bf H}_{3 })$, and  sending $n\to\8$ followed by $m\to\8$.
\end{proof}

With Lemmas \ref{lem1} and \ref{lem2} at hand,
 we move forward  to show that
 $(Y_t^{(n)})_{t\ge0}$ is a Cauchy sequence in  the sense of uniform convergence in probability, which is stated precisely as follows.

\begin{lemma}\label{Lem3}
Assume that  $({\bf H}_1)$, $({\bf H}_{2})$ with $g\equiv{\bf0}$
 and $({\bf H}_{3})$ are satisfied, and suppose that   $Y_0\in L^\beta(\Omega\to\R^d,\mathscr F_0,\P)$.  Then, for any $T,\vv>0,$
\begin{align}\label{E13}
\lim_{n,m\to\8}\P\Big(\sup_{t\in[0,T]}|Y_t^{(n)}-Y_t^{(m)}|\ge\vv\Big)=0.
\end{align}
\end{lemma}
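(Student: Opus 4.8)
The plan is to show that the Picard sequence $(Y^{(n)}_t)_{t \ge 0}$ is Cauchy in the topology of uniform convergence in probability on $[0,T]$, by localizing to the stopping times $\tau_R^{(n)} \wedge \tau_R^{(m)}$, applying the It\^o formula to $|Y^{(n)}_t - Y^{(m)}_t|^2$ (or to $V_\beta$ of the difference, if one wants to stay in the $L^\beta$-framework), exploiting the local weak monotonicity $({\bf H}_1)$ on the event where both processes stay bounded, and then using Lemmas \ref{lem1} and \ref{lem2} to control the ``frozen-coefficient'' discrepancy terms $p^{(n)}_{t-}$, $p^{(m)}_{t-}$ and the probability of leaving the ball. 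The hard part is that the drift and jump coefficients in \eqref{E2} are evaluated at the frozen point $Y^{(n)}_{t_n}$, not at $Y^{(n)}_t$, so $({\bf H}_1)$ does not apply directly; one must split $b_t(Y^{(n)}_{t_n}) - b_t(Y^{(m)}_{t_m})$ into the genuinely monotone part $b_t(Y^{(n)}_t) - b_t(Y^{(m)}_t)$ plus remainder terms involving $b_t(Y^{(n)}_{t_n}) - b_t(Y^{(n)}_t)$ (and similarly for $f$), and absorb the remainders using that $|p^{(n)}_{t-}| \to 0$ in probability on $(0,\tau_R^{(n)}]$ together with the local boundedness hypotheses.

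Concretely, fix $T, \vv > 0$ and fix $R > 0$. Write $\sigma := \tau_R^{(n)} \wedge \tau_R^{(m)} \wedge T$. On $\{T < \tau_R^{(n)} \wedge \tau_R^{(m)}\}$ the two processes both stay in $B(0,R/2)$. By Lemma \ref{lem2}, $\P(\tau_R^{(n)} \le T)$ can be made $<\vv$ uniformly in large $n$ by taking $R$ large, so it suffices to estimate $\P(\sup_{t \le \sigma} |Y^{(n)}_t - Y^{(m)}_t| \ge \vv)$. Applying It\^o's formula to $|Y^{(n)}_t - Y^{(m)}_t|^2$ and using the elementary inequality controlling the jump part, the drift-plus-jump generator contributes
\begin{align*}
&2\langle Y^{(n)}_t - Y^{(m)}_t,\, b_t(Y^{(n)}_{t_n}) - b_t(Y^{(m)}_{t_m})\rangle + \nu\big(|f_t(Y^{(n)}_{t_n},\cdot) - f_t(Y^{(m)}_{t_m},\cdot)|^2 \I_U(\cdot)\big),
\end{align*}
which I rewrite by inserting $\pm b_t(Y^{(n)}_t), \pm b_t(Y^{(m)}_t)$ (and the analogous insertions in $f$) as the monotone term $K_s(R)|Y^{(n)}_s - Y^{(m)}_s|^2$ from $({\bf H}_1)$ plus cross terms. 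The cross terms are bounded, using local boundedness of $b_t,f_t$ on $B(0,R/2)$ together with $({\bf H}_3)$, by an expression of the form $\int_0^\sigma \big(|Y^{(n)}_s - Y^{(m)}_s|^2 + \text{(terms depending on } p^{(n)}_{s-}, p^{(m)}_{s-})\big)\,\d s$; the latter terms go to $0$ in $\P$ (hence, after a suitable truncation as in the proof of Lemma \ref{lem2}, in $L^1$) by Lemma \ref{lem1}.

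After taking expectations up to the stopping time $\sigma$, Gronwall's inequality (with the locally integrable weight $K_s(R)$) gives $\E\big(\sup_{t \le \sigma}|Y^{(n)}_t - Y^{(m)}_t|^2\big) \le \eta_{n,m}(R) \, \e^{\int_0^T K_s(R)\,\d s}$ where $\eta_{n,m}(R) \to 0$ as $n,m \to \infty$ for each fixed $R$; the supremum inside the expectation is handled by Doob's $L^2$ maximal inequality applied to the martingale part and Cauchy--Schwarz on the drift part. Chebyshev then yields $\P(\sup_{t \le \sigma}|Y^{(n)}_t - Y^{(m)}_t| \ge \vv) \to 0$. Combining this with the bound $\P(\tau_R^{(n)} \le T) + \P(\tau_R^{(m)} \le T) < 2\vv$ for $R$ large and $n,m$ large, via
\begin{align*}
\P\Big(\sup_{t \le T}|Y^{(n)}_t - Y^{(m)}_t| \ge \vv\Big) \le \P\Big(\sup_{t \le \sigma}|Y^{(n)}_t - Y^{(m)}_t| \ge \vv\Big) + \P(\tau_R^{(n)} \le T) + \P(\tau_R^{(m)} \le T),
\end{align*}
we first send $n,m \to \infty$ and then $R \to \infty$ to conclude \eqref{E13}. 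The main obstacle, as noted, is bookkeeping the frozen-evaluation remainder terms so that they are genuinely negligible; this is exactly where Lemmas \ref{lem1} and \ref{lem2} are used, and where the local boundedness and integrability built into $({\bf H}_1)$ and $({\bf H}_3)$ are essential.
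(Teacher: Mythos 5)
Your overall architecture coincides with the paper's: localize with $\tau_R^{(n)}\wedge\tau_R^{(m)}$, dispose of the exit probabilities by Lemma \ref{lem2}, apply It\^o's formula to the squared difference, run a Gronwall-type argument with the weight $K_\cdot(R)$, and kill the frozen-evaluation remainders via Lemma \ref{lem1}. However, your key decomposition is different from the paper's and, as written, leaves a genuine gap. You insert $\pm b_t(Y^{(n)}_t)$, $\pm b_t(Y^{(m)}_t)$ (and analogously for $f$) so as to apply \eqref{E5} at the \emph{current} points $(Y^{(n)}_t,Y^{(m)}_t)$, and you claim the remainders are controlled ``using local boundedness of $b_t,f_t$ together with $({\bf H}_3)$'' by terms depending on $p^{(n)}_{s-},p^{(m)}_{s-}$ that vanish by Lemma \ref{lem1}. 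But $({\bf H}_1)$ provides no Lipschitz modulus for $x\mapsto b_t(x)$ nor for $x\mapsto \nu(|f_t(x,\cdot)-f_t(y,\cdot)|^2\I_U(\cdot))$; the remainders $b_t(Y^{(n)}_{t_n})-b_t(Y^{(n)}_t)$ and $\nu(|f_t(Y^{(n)}_{t_n},\cdot)-f_t(Y^{(n)}_t,\cdot)|^2\I_U(\cdot))$ are \emph{not} bounded by a locally integrable function times $|p^{(n)}_{s-}|$, and local boundedness only yields a bound that does not shrink as $|p^{(n)}_{s-}|\to0$. To rescue your route one must either add a dominated-convergence argument based on the continuity of $b_t(\cdot)$ and $f_t(\cdot,z)$ with the $({\bf H}_3)$-integrable dominating functions, or note that \eqref{E5} together with local boundedness of $b$ gives $\nu(|f_t(x,\cdot)-f_t(y,\cdot)|^2\I_U(\cdot))\le K_t(R)|x-y|^2+2|x-y|(|b_t(x)|+|b_t(y)|)$ on the ball. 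The paper sidesteps all of this by applying \eqref{E5} at the \emph{frozen} points $(Y^{(n)}_{t_n},Y^{(m)}_{t_m})$: then the $f$-term in It\^o's formula needs no splitting at all, and the only remainders are $2K_t(R)|p^{(n)}_t-p^{(m)}_t|^2$ and the cross term $2\langle p^{(n)}_t-p^{(m)}_t,\,b_t(Y^{(n)}_{t_n})-b_t(Y^{(m)}_{t_m})\rangle$, both dominated by $\Gamma(t,R)(|p^{(n)}_t|+|p^{(m)}_t|)$ with $\Gamma(\cdot,R)$ locally integrable, so the argument used for \eqref{E11} applies verbatim.

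A second gap is your use of Doob's $L^2$ maximal inequality on the martingale part. The jump martingale produced by It\^o's formula for $|Q^{n,m}_t|^2$ has integrand $2\langle Q^{n,m}_{s-},f^{n,m}_s(z)\rangle+|f^{n,m}_s(z)|^2$; verifying it is an $L^2$-martingale would require control of $\int_U\sup_{\{|x|\le R/2\}}|f_s(x,z)|^4\,\nu(\d z)$-type quantities, which are not assumed ($({\bf H}_3)$ only gives second moments), so Doob is not available without further work (a BDG-type inequality for purely discontinuous martingales as in \cite{MR} could be tried, but the integrability bookkeeping is delicate). The paper avoids estimating $\E\sup_{t\le\sigma}|Q^{n,m}_t|^2$ altogether: by right-continuity, $\{\sup_{t\in[0,\tau_R^{n,m}]}|Q^{n,m}_t|\ge\vv\}=\{|Q^{n,m}_{\tau_R^{n,m,\vv}}|\ge\vv\}$ where $\tau_R^{n,m,\vv}$ is the first time the difference reaches level $\vv$, so it suffices to bound $\E|Q^{n,m}_{\tau_R^{n,m,\vv}}|^2$ at a single stopping time via optional stopping, and then apply Chebyshev. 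Replacing your maximal-inequality step by this exit-time identity, and repairing the remainder estimate as above, turns your proposal into a complete proof.
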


\begin{proof}
Below,  we   set $Q_t^{n,m}:=Y_t^{(n)}-Y_t^{(m)},t\ge0,$ for notation brevity.
It is obvious  that
\begin{align*}
\P\Big(\sup_{t\in[0,T]}\big|Q_t^{n,m}\big|\ge\vv\Big)&\le \P\Big(\sup_{t\in[0,\tau_R^{n,m}]}\big|Q_t^{n,m}\big|\ge\vv \Big) +\P\big(\tau_R^{(n)} < T\big)+\P\big(  \tau_R^{(m)}<T\big),
\end{align*}
where $\tau_R^{n,m}:=T\wedge \tau_R^{(n)}\wedge \tau_R^{(m)}$.
Note that the following equivalence
\begin{align*}
\Big\{\sup_{t\in[0,\tau_R^{n,m}]}\big|Q_t^{n,m}\big|\ge\vv\Big\}=\Big\{\big|Q_{\tau_R^{n,m,\vv}}^{n,m}\big|\ge\vv\Big\}
\end{align*}
holds true, where
$$\tau_R^{n,m,\vv}:=\tau_R^{n,m}\wedge \inf\big\{t\ge0: \big|Q_t^{n,m}\big|\ge\vv\big\}.$$
Thus, \eqref{E13} can be obtained from the fact that
\begin{align*}
\lim_{R\to\8}\lim_{n\to\8}\P\big(\tau_R^{(n)}< T\big)+\lim_{R\to\8}\lim_{m\to\8}\P\big(\tau_R^{(m)}< T\big)=0,
\end{align*}
which is attainable  by invoking  Lemma \ref{lem2}, and provided that
\begin{align}\label{E6}
\lim_{R\to\8}\lim_{n,m\to\8}\P\big(\big|Q_{\tau_R^{n,m,\vv}}^{n,m}\big|\ge\vv \big)=0.
\end{align}

From \eqref{E2},
one obviously has for any $t\ge0,$
\begin{align*}
\d Q_t^{n,m}&=b_t^{n,m}\,\d t +\int_U f_t^{n,m}(z)\,\wt N(\d t,\d z),
\end{align*}
in which
$$b_t^{n,m}:=b_t(Y_{t_n}^{(n)})-b_t(Y_{t_m}^{(m)}) \quad \mbox{ and } \quad
f_t^{n,m}(z):=f_t(Y_{t_n}^{(n)},z)-f_t(Y_{t_m}^{(m)},z).$$
Applying  It\^o's formula followed by using \eqref{E5}  and $K_\cdot(R)>0$
yields that for   $0<t< \tau_R^{n,m,\vv}$,
\begin{align*}
&\d \Big(\e^{-2\int_0^tK_s(R)\,\d s}\big|Q_t^{n,m}\big|^2\Big)\\
&\le\e^{-2\int_0^tK_s(R)\,\d s}\Big(-2K_t(R)\big|Q_{t}^{n,m}\big|^2+
2\big\<Y_{t_n}^{(n)}-Y_{t_m}^{(m)},  b_t^{n,m}\big\>
+ \int_U |f_t^{n,m}(z) |^2\,\nu(dz)\Big)\,\d t\\
&\quad+
2\e^{-2\int_0^tK_s(R)\,\d s}\big\<p_{t
}^{(n)}-p_{t
}^{(m)},  b_t(Y_{t_n}^{(n)})-b_t(Y_{t_m}^{(m)})\big\>\,\d t+\d M_t^{n,m}\\
&\le K_t(R)\e^{-2\int_0^tK_s(R)\,\d s}\Big(-2\big|Q_{t }^{n,m}\big|^2+\big|Y_{t_n}^{(n)}-Y_{t_m}^{(m)}\big|^2\Big)\,\d t\\
& \quad+2\big|p_{t
}^{(n)}-p_{t
}^{(m)}\big|\cdot\big|b_t(Y_{t_n}^{(n)})-b_t(Y_{t_m}^{(m)})\big|\,\d t+\d M_t^{n,m}\\
&\le 2K_t(R)\e^{-2\int_0^tK_s(R)\,\d s} \big|p_{t
 }^{(n)}-p_{t
 }^{(m)}\big|^2\,\d t
+2\big|p_{t
 }^{(n)}-p_{t
 }^{(m)}\big|\cdot\big|b_t(Y_{t_n}^{(n)})-b_t(Y_{t_m}^{(m)})\big|\,\d t+\d M_t^{n,m}\\
&\le \Gamma(t,R)\big(|p_{t
}^{(n)}|+|p_{t
 }^{(m)}|\big)\,\d t+\d M_t^{n,m},
\end{align*}
where  $(M_t^{n,m})_{t\ge0}$ is a local martingale and
$
\Gamma(t,R):=4
RK_t(R) +4\sup_{ |x|\le R/2 }|b_t(x)|.
$
Subsequently, with the aid of $Y_0^{(n)}=Y_0^{(m)}=Y_0$ and $\tau_R^{n,m,\vv}\le \tau_R^{(n)}\wedge \tau_R^{(m)}\wedge T$,
we arrive at
\begin{equation}\label{e:addunique}
\begin{split}
\E \big|Q_{\tau_R^{n,m,\vv}}^{n,m}\big|^2 \le \e^{ 2\int_0^TK_s(R)\,\d s}\bigg(&\E\Big(\int_0^{\tau_R^{(n)}\wedge T}\Gamma(s,R)|p_{s-}^{(n)}|\,\d s\Big)\\
&+\E\Big(\int_0^{\tau_R^{(m)}\wedge T}\Gamma(s,R)|p_{s-}^{(m)}|\,\d s\Big)\bigg).
\end{split}
\end{equation}
Thus, \eqref{E6} follows from Chebyshev's inequality and by noting
\begin{align*}
\lim_{n\to\8}\E\Big(\int_0^{\tau_R^{(n)}\wedge T}\Gamma(s,R)|p_{s-}^{(n)}|\,\d s\Big)
+\lim_{n\to\8}\E\Big(\int_0^{\tau_R^{(m)}\wedge T}\Gamma(s,R)|p_{s-}^{(m)}|\,\d s\Big)=0,
\end{align*}
which  can indeed be established by repeating exactly the procedure to derive \eqref{E11}
(in particular, see the argument for \eqref{e:addcomment}).
\end{proof}

Now we turn to complete the proof of Proposition \ref{pro1}.
\begin{proof}[Proof of Proposition $\ref{pro1}$]  {\it Strong existence}.
Since  Lemma \ref{Lem3} is available, the proof on strong well-posedness of \eqref{E1} is more or less standard; see e.g. \cite[Theorem 3.1.1]{PR} and \cite[Theorem 1.5]{MSSZ} for more details. Nevertheless,  we herein provide a sketch to make the content self-contained.

Note that the space $L^\beta(\Omega, \mathscr D([0,T];\R^d))$ is complete w.r.t. locally uniform convergence in probability, where $\mathscr D([0,T];\R^d)$ stands for the set of   c\'{a}dl\'{a}g functions $f:[0,T]\to\R^d.$
Then,
Lemma \ref{Lem3}   implies that there is a c\'{a}dl\'{a}g, $(\mathscr F_t)_{t\ge0}$-adapted process $(Y_t)_{t\ge0}$ such that
\begin{align*}
\sup_{t\in[0,T]}|Y_t^{(n)}-Y_t|\to0 \quad \mbox{in probability as } n\to\8.
\end{align*}
Therefore, a subsequence, still written as $(Y_t^{(n)})_{t\in[0,T]}$, can be extracted so $\P$-a.s.,
\begin{align*}
\sup_{t\in[0,T]}|Y_t^{(n)}-Y_t|\to0 \quad  \mbox{ as } n\to\8.
\end{align*}
This thus implies that
\begin{align}\label{EE}
\sup_{t\in[0,T]} S_t<\8\quad \P\mbox{-a.s.}\quad   \mbox{ with } S_t:=\sup_{n\ge1}|Y_t^{(n)}|.
\end{align}

Define the following stopping time:   for any $R>0, $
\begin{align*}
\tau_R =T\wedge\inf\big\{t\in[0,T]:S_t>R\big\}.
\end{align*}
The continuity of $x\mapsto b_t(x)$, Assumption (${\bf H}_3$) and  the dominated convergence theorem yield that for $t\le \tau_R,$
\begin{align*}
\lim_{n\to\8}\int_0^tb_s(Y_{s_n}^{(n)}) \,\d s=\int_0^tb_s (Y_s ) \,\d s, \quad \P\mbox{-a.s.}
\end{align*}
Furthermore, applying  It\^o's isometry and combining the dominated convergence theorem with (${\bf H}_3$) and the continuity of $x\mapsto f_t(x,z)$ enable us to derive that
\begin{align*}
&\lim_{n\to\8}\E\Big|\int_0^{\tau_R}\int_Uf_s(Y_{s_n}^{(n)},z)-f_s(Y_s,z)  \wt N(\d s,\d z)\Big|^2\\
&=\lim_{n\to\8}\E\Big(\int_0^{\tau_R}\int_U\big|f_s(Y_{s_n}^{(n)},z)-f_s(Y_s,z)\big|^2  \nu(\d z)\d s\Big)=0.
\end{align*}
Then, for  a subsequence, still written as $(Y_t^{(n)})_{0\le t\le T}$,  we have  for any $t\le \tau_R,$
\begin{align*}
\lim_{n\to\8}\int_0^t \int_U f_s(Y_{s_n}^{(n)},z) \,\wt N(\d s,\d z)=\int_0^t\int_U f_s(Y_s,z) \,\wt N(\d s,\d z), \quad \P\mbox{-a.s.}
\end{align*}
Based on the preceding analysis, we conclude that $(Y_t)_{t\in[0,T]}$ is a strong solution to \eqref{E1} once we note  $\lim_{R\to\8}\tau_R=T$ by taking advantage of \eqref{EE}.

{\it Uniqueness}.
For any $R,T>0$, define  $\tau_R^{Y,T}=T\wedge \tau_R^Y$ with
$\tau_R^Y:= \inf\{t>0: |Y_t|>R\}$, where $(Y_t)_{t\ge0}$ is a strong solution to \eqref{E1}.  By taking $x=y$ in \eqref{E3}, we obviously obtain from (${\bf H}_2$) that
\begin{equation}\label{KL1}
\begin{split}
(\mathscr L_t^0V_\beta)(x,x)&\le \frac{1}{2}\beta (1+|x|^2)^{\frac{\beta}{2}-1} \big(2\<x,b_t(x)\>+\nu(|f_t(x,\cdot)|^2\I_U(\cdot))\big)\\
&\le \frac{1}{2}\beta\varphi(t)V_\beta(x),
\end{split}
\end{equation}
where  $\mathscr L_t^0$ was defined as in \eqref{KL}. Subsequently, by the aid of the locally integrable property of $\varphi$,
we can deduce that for some constant $  C(T)>0,$
\begin{equation}\label{addmoment}
\E\big|Y_{ \tau_R^{Y,T}}\big|^\beta\le C(T)(1+\E|Y_0|^\beta).
\end{equation}
This further implies that
\begin{align*}
R^\beta\P(\tau_R^Y<T)\le \E\big(\big|Y_{ \tau_R^{Y}}\big|^\beta\I_{\{\tau_R^Y<T\}}\big)\le  C(T)(1+\E|Y_0|^\beta).
\end{align*}
As a consequence,  $\tau_R^{X,T}\to T$, $\P$-a.s., as $R\to\8.$

Below, let $(Y_t)_{t\in[0,T]}$ and $(\tilde Y_t)_{t\in[0,T]}$ be two solutions to \eqref{E1} with the same initial value, and set
$\tau_R^*:=\tau_R^{Y,T}\wedge\tau_R^{\tilde Y,T}$. By following the line to derive
\eqref{e:addunique}, we deduce that for any $\vv>0$ and $t\in[0,T],$
\begin{align*}
 \E\Big(\e^{-\frac{1}{2}\beta\int_0^{t\wedge\tau_R^*}K_s(R)\,\d s}\big(\vv+|Y_{t\wedge\tau_R^*}-\tilde Y_{t\wedge\tau_R^*}|^2\big)^{\frac{\beta}{2}}\Big)=0
 \end{align*}
and so
\begin{align*}
 \E\Big(\e^{-\frac{1}{2}\beta\int_0^{t\wedge\tau_R^*}K_s(R)\,\d s}\big|Y_{t\wedge\tau_R^*}-\tilde Y_{t\wedge\tau_R^*}\big|^\beta\Big)=0.
 \end{align*}
This, combining
  $K_\cdot(R)>0$ with $\tau_R^{Y,T}\to T$ and $\tau_R^{\tilde Y,T}\to T$,  $\P$-a.s., as $R\to\8, $ and utilizing Fatou's lemma,  give that for any $t\in(0,T],$
 \begin{align*}
 \E|Y_t-\tilde Y_t|^\beta\le \liminf_{R\to\8}\E  \big|Y_{t\wedge\tau_R^*}-\tilde Y_{t\wedge\tau_R^*}\big|^\beta =0.
\end{align*}
Whence, the uniqueness of strong solution follows.

{\it Moment estimate}.
The establishment of \eqref{moment1}  can be done by examining the procedure to derive
\eqref{addmoment}, so we herein omit the corresponding details.
\end{proof}

With the help of Proposition  \ref{pro1},   we move to  apply the so-called  interlacing technique to construct the unique solution to the SDE \eqref{e} and thus verify Theorem \ref{thm}.

\begin{proof}[Proof of Theorem $\ref{thm}$]
Let for any $t\ge0,$
\begin{align}\label{EP}
Z^V_t=\int_0^t\int_VzN(\d s,\d z)\quad \mbox{ and } \quad D_p^V=\{t\ge0: Z^V_t\neq Z^V_{t-}, \triangle Z_t \in V\},
\end{align}
where $\triangle Z_t:=Z_t-Z_{t-}$, the increment of $Z_\cdot$ at time $t$.
Note that $D_p^V$ is a countable set so that it can be rewritten as
$
D_p^V=\{\si_1,\cdots,    \si_n,\cdots\},
$
where, $n\mapsto \si_n$ is increasing, and,
 for each fixed $n\ge1,$ $\si_n$ is a finite stopping time satisfying $\lim_{n\to\infty}\si_n=\infty$ a.s.  by taking  $\nu(\I_V)<\8$ into consideration. Let $p_n=\triangle Z_{\si_n}, n\ge1 $, i.e.,  the jump amplitude at the jumping time $\si_n$.
  Then, $(p_n)_{n\ge1}$ is an i.i.d sequence of random variables with the common distribution $\nu|_{V}/\nu(\I_V)$ and independent of $(\si_n)_{n\ge1}$.
Now, we set
\begin{equation*}
X_t^{(1)}=\left\{\begin{array}{cc}
                   Y_t, \qquad\qquad\qquad\,\, & \quad 0\le t<\si_1, \\
                   Y_{\si_1}+g_{\si_1-}(Y_{\si_1-},p_1), & t=\si_1,
                 \end{array}\right.\\
\end{equation*}
where $(Y_t)_{t\ge0}$ solves \eqref{E1}.
Obviously, the process $(X_t^{(1)})_{t\in[0,\si_1]}$ is   the unique solution to \eqref{e} on $[0,\si_1]$. Next, we set
\begin{equation*}
X_t^{(2)}=\left\{\begin{array}{cc}
                   X_t^{(1)}, \qquad\qquad\qquad\quad\,\,\, & \quad 0\le t\le \si_1, \\
                   Y_t+g_{\si_1-}(Y_{\si_1-},p_1), \qquad\quad & \quad \si_1<t<\si_2, \\
                   Y_{\si_2-}+\sum_{i=1}^2g_{\si_i-}(Y_{\si_i-},p_i), & t=\si_2,
                 \end{array}\right.\\
\end{equation*}
which is the unique solution to \eqref{e} on $[0,\si_2]$. Continuing  successively the previous procedure, the global solution $(X_t)_{t\ge0}$ to \eqref{e}  can be determined uniquely. In particular,   $(X_t)_{t\ge0}$ can be written  as follows:
$$X_t=Y_t+\sum_{i=1}^\infty\big(g_{\si_i-}(Y_{\si_i-},p_i)\I_{\{\si_i\le t\}}\big)=Y_t+\int_0^t\int_V g_s(Y_s,z)\,N(\d s,\d z).$$

Below, we proceed to prove the statement \eqref{E12}. In retrospect, $V_{\beta}(x)=(1+|x|^2)^{\frac{\beta}{2}}$ for all $x\in\R^d.$
Applying  It\^o's formula yields that
\begin{align*}
&\d\Big(\e^{-\int_0^t(\frac{\beta}{2}\varphi(s)+\nu(\I_V))\,\d s }V_\beta(X_t)\Big)\\
&= \e^{-\int_0^t(\frac{\beta}{2}\varphi(s)+\nu(\I_V))\,\d s} \Big(-\big(\beta\varphi(t)/2+\nu(\I_V)\big)V_\beta(X_t)+(\mathscr L_t V_\beta)(X_t) \Big)\,\d t+\d M_t,
\end{align*}
where $(M_t)_{t\ge0}$ is a martingale, and for $h\in C^2(\R^d)$
\begin{equation*}
(\mathscr L_th)(x):=(\mathscr L_t^0h)(x,x) +\nu\big( (h(x+g_t(x,\cdot))-h(x) )\I_V(\cdot) \big)
\end{equation*}
with $\mathscr L_t^0$ being given by \eqref{KL}.
Furthermore, by invoking the  inequality: $(a+b)^{\theta}\le a^\theta+b^\theta$, $a,b>0$ and $\theta\in(0,1],$ it follows that for any $x,y\in\R^d,$
\begin{align}\label{KL2}
V_\beta(x+y)-V_\beta(x)
  &\le\big| |y|^2+2\<x,y\>\big|^{\frac{\beta}{2}}
\le 2^{{\be}/{2}}|y|^\be+|x|^\be.
\end{align}
This, together with \eqref{KL1} and $({\bf H}_{2})$, leads to
\begin{equation*}
\begin{split}
(\mathscr L_tV_\beta)(x)
&\le \frac{\be}{2}(1+|x|^2)^{\be/2-1}\Big(2\<x,b_t(x)\>
 +\nu(|f_t(x,\cdot)|^2\I_U(\cdot))\\
 &\qquad\qquad \quad\quad\quad\quad\quad+\frac{2^{\be/2+1}}{\be}(1+|x|^2)^{1-\be/2}\nu( |g_t(x,\cdot)|^\be\I_V(\cdot))\Big)+|x|^\be\,\nu(\I_V)\\
 &\le  (\be\varphi(t) /2+\nu(\I_V) )V_\beta(x).
\end{split}
\end{equation*}
 Consequently, the desired assertion \eqref{E12} follows.
\end{proof}

Before the end of this section, we finish the proof of Theorem \ref{theorem1}, which is concerned with
a stronger moment estimate.

\begin{proof}[Proof of Theorem $\ref{theorem1}$]
According to Theorem \ref{thm}, under Assumptions  $({\bf H}_1 )$, $({\bf H}_{2}')$ and $({\bf H}_{3})$, the SDE \eqref{e} has a unique strong solution.

 Recall that $D_p^V=\{\si_1,\cdots,    \si_n,\cdots\},$
and $(p_n)_{n\ge1}$ is the   Poisson point process associated with the L\'evy process $(Z_t^V)_{t\ge0}$ given in \eqref{EP}. More details on $(\si_n)_{n\ge1}$ and
$(p_n)_{n\ge1}$ are presented in the beginning part of the proof of  Theorem \ref{thm}.
Obviously, for any $T>0$, we have
 \begin{align*}
 \E\Big(\sup_{0\le t\le T}|X_t|^\beta\Big)
 &\le \E\Big(\I_{\{0\le T<\si_1\}}  \sup_{0\le t\le T}|X_t|^\beta  \Big) +\sum_{n=1}^\8\E\Big(\I_{\{\si_n\le T<\si_{n+1}\}} \sum_{k=0}^{ n-1 }  \sup_{ \si_k\le t  <\si_{k+1} }|X_t|^\beta  \Big)\\
 &\quad+\sum_{n=1}^\8\E\Big(\I_{\{\si_n\le T<\si_{n+1}\}} \sup_{ \si_n\le t\le T}|X_t|^\beta  \Big) \\
 &=:\Gamma_1+\Gamma_2+\Gamma_3.
 \end{align*}
Therefore, to achieve \eqref{moment}, it is sufficient to show that  for some constant  $C_T^*>0$,
\begin{align}\label{ER2}
\Gamma_i\le C_T^*(1+\E|X_0|^\beta),\quad i=1,2,3.
\end{align}

For the validity  of \eqref{ER2}, we firstly verify that there exists a constant $C_T^{**}>0$ such that for any   $0\le k\le (n-1)^+$ and $\si_n\le T$,
\begin{align}\label{ER}
\E\Big(\sup_{\si_k\le s<\si_{k+1}}|X_s|^\beta\big|\mathscr G_{n,k}\Big)\le C_T^{**}\big(1+   |X_{\si_k}|^\beta\big),
\end{align}
 where $\si_0=0,$ and
$$
\mathscr G_{n,k }:=\si(\si_i,1\le i\le n+1)\vee \mathscr F_{T\wedge\si_k}, \quad  n,k\ge0.
$$
By It\^o's formula, it follows from $({\bf H}_2')$ that for any $t\in[\si_k,\si_{k+1})$ with
$0\le k\le (n-1)^+$,
\begin{equation}\label{EW2}
\begin{split}
\E\Big(\sup_{ \si_k\le s\le t}|X_s|^2\big|\mathscr G_{n,k}\Big)&\le  |X_{\si_k} |^2+3\int_{\si_k}^t\phi(s)\big(1+\E(|X_s|^2 |\mathscr G_{n,k})\big)\,\d s\\
&\quad+\E\Big(\sup_{\si_k\le s\le t} |M_{k,s}|\big|\mathscr G_{n,k }\Big),
\end{split}
\end{equation}
where
\begin{align*}
 M_{k,t}:=\int_{\si_k^V}^t\int_U \big(2\<X_{s-},f_s(X_{s-},z)\>+|f_s(X_{s-},z)|^2\big) \tilde N(\d s,\d z).
\end{align*}
Next, by applying BDG's inequality (see e.g. \cite[Theorem 1]{MR}), in addition to (${\bf H}_2'$), there exist constants $c_1,c_2>0$ such that for any for any $t\in[\si_k,\si_{k+1})$ with
$0\le k\le (n-1)^+$,
\begin{align*}
\E\Big(\sup_{\si_k\le s\le t} |M_{k,s}|\big|\mathscr G_{n,k }\Big)&\le c_1\E\bigg(\bigg(\int_{\si_k}^t\int_U|X_s|^2|f_s(X_{s-},z)|^2\,\nu(\d z)\,\d s\bigg)^{{1}/{2}}\big|\mathscr G_{n,k }\bigg)\\
&\quad+c_1\int_{\si_k}^t\int_U\E\big( |f_s(X_{s-},z)|^2\big|\mathscr G_{n,k }\big)\,\nu(\d z)\,\d s \\
&\le \frac{1}{2}\E\Big(\sup_{ \si_k\le s\le t}|X_s|^2\big|\mathscr G_{n,k}\Big)+c_2\int_{\si_k}^t\phi(s)\big(1+\E(|X_s|^2 |\mathscr G_{n,k})\big)\,\d s.
\end{align*}
Subsequently, plugging the estimate above  back into \eqref{EW2} followed by Gronwall's inequality yields that there exists a constant $c_3>0$ such that   for any   $0\le k\le (n-1)^+$ and $\si_n\le T$,
\begin{align*}
\E\Big(\sup_{ \si_k\le s<\si_{k+1} }|X_s|^2\big|\mathscr G_{n,k}\Big)\le c_3\e^{c_3\int_0^T\phi(u)\d u}\big(1+ |X_{\si_k} |^2\big).
\end{align*}
This, combining with Jensen's inequality, implies  \eqref{ER}.

Let $N_t=\sharp\{s\in [0,t]: Z^V_s\neq Z^V_{s-}\}$, which is a Poisson process with the intensity $\nu(\I_V)t$ (so $\E N_T=\nu(\I_V)T$). Making use of the definition of $\mathscr G_{n,k}$ and \eqref{ER} gives us that
\begin{align*}
\Gamma_2&=\sum_{n=1}^\8\E\Big(\I_{\{\si_n\le T<\si_{n+1}\}} \sum_{k=0}^{ n-1 }  \E\Big(\sup_{\si_k\le t  <\si_{k+1} }|X_t|^\beta\big| \mathscr G_{n,k} \Big)\Big)\\
&\le  C_T^{**}\sum_{n=1}^\8\E\Big(\I_{\{\si_n\le T<\si_{n+1}\}} \sum_{k=0}^{ n-1 }    \big(1+    |X_{\si_k}|^\beta \big)\Big)\\
&\le C_T^{**}\Big(1+    \sup_{0\le t\le T}\E|X_t|^\beta  \Big)\sum_{n=1}^\8\P(N_T=n) n
\\
&\le  \nu(\I_V)T C_T^{**}\Big(1+    \sup_{0\le t\le T}\E|X_t|^\beta  \Big),
\end{align*}
where in the second identity we exploited the fact that  $\I_{\{\si_n\le T<\si_{n+1}\}}$ is independent of $\sum_{k=0}^{ n-1 }     (1+    |X_{\si_k}|^\beta  )$. Hence, \eqref{ER2} with $i=2$ is available by the aid of \eqref{E12}.

On the other hand,  by following exactly the strategy to derive \eqref{ER},  we can obtain that for   $T\in[\si_n,\si_{n+1}),$
  \begin{align*}
\E\Big(\sup_{ \si_n\le t\le T  }|X_t|^\beta\big|\mathscr G_{n,n}\Big)\le C_T^{**}\big(1+   |X_{\si_n}|^\beta\big).
\end{align*} In particular, it is ready  to see  that
\begin{align*}
\Gamma_1&\le \E\Big(  \E\Big(\sup_{ 0\le t\le T}|X_t|^\beta\big|\mathscr G_{0,0}  \Big)\Big) \le C_T^{**} (1+   \E|X_0|^\beta )
\end{align*}
so that \eqref{ER2} holds true for $i=1.$
Furthermore,  we can obtain  that
\begin{align*}
 \Gamma_3&=\sum_{n=1}^\8\E\Big(\I_{\{\si_n\le T<\si_{n+1}\}}\E\Big( \sup_{ \si_n\le t\le T}|X_t|^\beta  \big|\mathscr G_{n,n}\Big) \Big)\\
 &\le C_T^{**}\sum_{n=1}^\8\E\Big(\I_{\{\si_n\le T<\si_{n+1}\}} \big(1+   |X_{\si_n}|^\beta\big) \Big)\\
 &\le 2C_T^{**}\sum_{n=1}^\8 \big(\P(N_T=n) \big(1+   \E|X_{T\wedge\si_n-}|^\beta +\E|g_{T\wedge\si_n-}(X_{T\wedge\si_n-},p_n)|^\beta\big) \big)\\
&\le  2C_T^{**}\big(1+\phi(T)^{\frac{\beta}{2}}\big) \sum_{n=1}^\8 \big(\P(N_T=n) \big(1+   \E|X_{T\wedge\si_n-}|^\beta \big) \big)\\
&\le 2C_T^{**}\big(1+\phi(T)^{\frac{\beta}{2}}\big)\Big(1+    \sup_{0\le t\le T}\E|X_t|^\beta  \Big),
\end{align*}
where in the second inequality we employed that
$X_{\si_n} = X_{\si_n-} +g_{\si_n-}(X_{\si_n-},p_n)$ and the strong Markov property,
 and in the third inequality we took advantage of $({\bf H}_2')$ and the non-decreasing property of $\phi $ as well as the fact that $(a+b)^{\beta/2}\le a^{\beta/2}+b^{\beta/2}$ for all $a,b\ge0$. Finally, \eqref{ER2} with $i=3$ is verifiable on the basis of \eqref{E12}. Therefore, the proof is finished.
\end{proof}

\section{Proofs of Theorem \ref{theorem2},  Theorem  \ref{POC} and Remark $\ref{EX}$ {\rm(ii)}}\label{sec3}
\subsection{Proof of
Theorem \ref{theorem2}}

Based on the warm-up Theorem \ref{thm}, along with the fixed point theorem,
in the following part  we aim to complete the proof of Theorem \ref{theorem2}.

\begin{proof}[Proof of Theorem $\ref{theorem2}$]
For a  fixed   horizon $T>0$,   define the following path space:
$$
\scr{C}^{X_0}_{T}=\Big\{\mu\in C([0,T];\scr{P}_\be(\R^{d})): \sup_{t\in[0,T]}\mu_t(|\cdot|^\beta)<\8, \mu_0=\scr{L}_{X_0}\Big\},
$$
where  $X_0\in L^\be(\Omega\to\R^d,\scr{F}_0,\P)$ is the initial value of $(X_t)_{t\ge0}$, and
 \begin{align*}
C([0,T];\scr{P}_\beta(\R^{d})):=\{\mu:[0,T]\to\scr{P}_\beta(\R^{d}) \mbox{ is weakly continuous}\}.
\end{align*}
For $\gamma>0,$ $(\scr{C}^{X_0}_{T}, \W_{\be,\ga})$ is a complete metric space, where
$$
\W_{\be,\ga}(\mu,\widetilde{\mu}):=\sup_{0\le t\le T}\big(\e^{-\ga t}\W_\be(\mu_t,\widetilde{\mu}_t)\big), \quad \mu,\widetilde{\mu}\in\scr{C}^{X_0}_{T}.
$$

In the sequel, we work with  the  decoupled SDE associated with \eqref{MV}:  for any $\mu\in \scr{C}^{X_0}_{T}$,
\begin{equation}\label{df}
\d X^{\mu}_t=b(X^{\mu}_t,\mu_t)\,\d t+\int_U f(X^\mu_{t-},\mu_t,z)\,\wt N(\d t,\d z)+\int_V g(X_{t-}^\mu,\mu_t,z)\,N(\d t,\d z)
\end{equation}
with the initial value $X^\mu_0=X_0$. By setting for any $t\ge0,$ $x,z\in\R^d$ and $\mu\in\scr{C}^{X_0}_{T},$
$$b_t^\mu(x):=b(x,\mu_t), \quad f_t^\mu(x,z):=f(x,\mu_t,z) \quad  \mbox{ and } \quad   g_t^\mu(x,z):=g(x,\mu_t,z),$$
the SDE \eqref{df} can be reformulated  so it fits into the framework  \eqref{e}. From \eqref{EQ}, it is easy to see that for any $t\ge0$, $x,y\in\R^d$ and  $\mu\in \scr{C}^{X_0}_{T}$,
\begin{align*}
2\<b_t^\mu(x)-b_t^\mu(y),x-y\>+\nu(|f_t^\mu(x,\cdot)-f_t^\mu(y,\cdot)|^2\I_U(\cdot))\le L_1|x-y|^2
\end{align*}
so that \eqref{E5} holds true with $b_t$ and $f_t$ being replaced by
 $b_t^\mu$ and $f_t^\mu$, respectively. In addition, by virtue of \eqref{EQ*} and (${\bf A}_2$), it follows
  that for some constants $c_1,c_2,c_3>0,$
\begin{equation}\label{EY}
\begin{split}
2&\<x,b_t^\mu(x)\>
  +\nu( |f_t^\mu(x,\cdot)|^2\I_U(\cdot)) + 2^{\frac{\be}{2}+1} \be^{-1}(1+|x|^2)^{1-\frac{\be}{2}}  \nu( |g_t^\mu(x,\cdot)|^\be\I_V(\cdot)) \\
  &\le L_2(1+|x|^2+\mu_t(|\cdot|^\beta)^{\frac{2}{ \beta}})\\
  &\quad+
  c_1(1+|x|^2)^{1-\frac{\be}{2}}\big(\nu((1+|\cdot|^\beta)\I_V(\cdot))(|x|^\beta+\mu_t(|\cdot|^\beta))
  +\nu(|g({\bf0},\delta_{\bf0},\cdot)|^\beta\I_V(\cdot))\big)\\
  &\le c_2(1+|x|^2+\mu_t(|\cdot|^\beta)^{\frac{2}{ \beta}})\\
  &\le c_3\Big(1+\sup_{0\le s\le t}\mu_s(|\cdot|^\beta)^{\frac{2}{ \beta}}
  \Big)(1+|x|^2),\quad 0\le t\le T,
\end{split}
\end{equation}
 where
 we used \eqref{e:welldefined}
 in the first inequality,
   and the second inequality is valid due to Young's inequality. Therefore, (${\bf H}_2$) holds true thanks to $\mu\in\scr{C}^{X_0}_{T}$.
 Furthermore, $({\bf A}_3)$    implies  (${\bf H}_3$) directly. Consequently,
 according to Theorem \ref{thm},  \eqref{df} has the unique strong solution $(X^{\mu}_t)_{t\ge0}$ under
 $({\bf A}_1)$-$({\bf A}_{3})$, along with \eqref{e:welldefined}.

Now, we
define a mapping $\scr{C}^{X_0}_{T}\ni\mu\mapsto\Phi(\mu)$ by
\begin{equation}\label{Phi}
(\Phi(\mu))_t=\scr{L}_{X^{\mu}_t}, \quad t\in[0,T].
\end{equation}
In the sequel, we shall claim  respectively that
(i) $\Phi:\scr{C}^{X_0}_{T}\to\scr{C}^{X_0}_{T}$, and
(ii)  $\Phi$ is contractive under $\W_{\be,\ga}$  for some $\gamma>0$ large enough.
Once  (i) and (ii) are available, the classical Banach fixed point theorem yields that the map $\Phi$ defined in \eqref{Phi} has a unique fixed point, still written as  $\mu$, so  $(\Phi(\mu))_t=\mu_t=\scr{L}_{X^{\mu}_t}$ for any $t\in[0,T]$. Thus, the measure variable $(\mu_t)_{t\in[0,T]}$ in \eqref{df} can be replaced by $(\scr{L}_{X^{\mu}_t})_{t\in[0,T]}$. Accordingly, \eqref{MV} has a unique strong solution.

 For the validity of (i), we need to   show that
there is a constant $C_{T,\mu}>0$ such that
\begin{equation}\label{sup}
\sup_{0\le t\le T}\E|X^{\mu}_t|^\be\le C_{T,\mu}(1+\E|X_0|^\be),\quad \mu\in\scr{C}^{X_0}_{T}, T>0.
\end{equation}
and that
\begin{align}\label{WR}
\mathscr L_{X_t^\mu}\overset{w}{\longrightarrow}\mathscr L_{X_0^\mu} \quad  \mbox{ as  } t \to0.
\end{align}
Indeed,  \eqref{sup} follows from \eqref{E12} and
 \eqref{EY}.
For any $R>0$ and $h\in \hbox{Lip}_b(\R^d)$ (i.e., the set of bounded Lipschitz functions on $\R^d$),    one obviously has for any $t\in[0,T]$
\begin{align*}
\E|h(X_t^\mu)-h(X_0^\mu)|\le \|h\|_{\hbox{Lip}}\E | X_{t\wedge\tau_R}^\mu - X_0^\mu | +2\|h\|_\8\P( \tau_R\le T)=:I_1(t,R)+I_2(T,R),
\end{align*}
where $\tau_R:=\{t>0: |X_t^\mu|>R\}$ and $\|h\|_{\mbox{Lip}}$ is the Lipschitz constant of $h$.
Based on this, to verify \eqref{WR}, it suffices to show that $\lim_{t\to0}I_1(t,R)=0$ for fixed $R$ and $\lim_{R\to\8}I_2(T,R)=0$, respectively.
By It\^o's formula, in addition to the local boundedness (see $({\bf A}_1)$ and $({\bf A}_3)$ for more details),
it follows that for any $\mu\in\scr{C}^{X_0}_{T}$,  $\vv>0,$ and $t\in[0,T],$
\begin{align*}
&\E(\vv+|X_{t\wedge\tau_R}^\mu-X_0^\mu|^2)^{\frac{\beta}{2}}\\
&\le \E\bigg(\int_0^{t\wedge\tau_R}\Big[\frac{1}{2}\beta(\vv+|X_s^\mu-X_0^\mu|^2)^{ \frac{\beta}{2}-1}\\
&\quad\quad\quad\times\Big(2\<X_s^\mu-X_0^\mu,b(X^{\mu}_s,\mu_s)\>+\nu( |f(X^{\mu}_s,\mu_s,\cdot)|^2\I_U(\cdot))\\
&\qquad\qquad\quad +2^{\frac{\be}{2}+1} \be^{-1}(\vv+|X_s^\mu-X_0^\mu|^2)^{1-\frac{\be}{2}}  \nu( |g (X^{\mu}_s,\mu_s,\cdot)|^\be\I_V(\cdot))\Big)\Big]\,\d s\bigg)\\
&\quad+ \nu(V)\E\bigg(\int_0^{t\wedge\tau_R}|X_{s\wedge\tau_R}^\mu-X_0^\mu|^{\beta}\,\d s\bigg)\\
&\le C_{\vv,R}t,
\end{align*}
where $C_{\vv,R}$ is a positive constant depending on the parameters $\vv,R.$
See the arguments in the end of the proof for Theorem \ref{thm}.
The preceding  estimate, besides Jensen's inequality for $\beta\in[1,2]$,  implies that for any $\vv,R>0,$
\begin{align*}
\E | X_{t\wedge\tau_R}^\mu - X_0^\mu |\le \E(\vv+|X_{t\wedge\tau_R}^\mu-X_0^\mu|^2)^{\frac{1}{2}}\le  (C_{\vv,R}t)^{\frac{1}{\beta}}.
\end{align*}
This apparently  leads to   $\lim_{t\to0}I_1(t,R)=0.$
Furthermore,
  $\lim_{R\to\8}I_2(T,R)=0$ can be handled by following the argument to derive \eqref{addmoment} and using \eqref{EY}.
Indeed, we have
\begin{align*}
\P(\tau_R\le T)\le \frac{\E\big(1+\big|X_{
T\wedge \tau_R}^\mu\big|^2\big)^{\beta/2}}{(1+
R^2)^{\beta/2}}.
\end{align*}
Then, $\lim_{R\to\8}I_2(T,R)=0$ is available by taking \eqref{sup} into consideration.

We turn  to show  (ii). Recall that $(Z^V_t)_{t\ge0}$ is defined as in \eqref{EP}. Furthermore,
 $ (p_n)_{ n\ge1} $ (i.e., the sequence concerning jumping amplitude of $(Z^V_t)_{t\ge0}$) are i.i.d   random variables with the common distribution $\nu|_{V}/\nu(V)$ and independent of $(\si_n)_{n\ge1}$ (i.e., the sequence of jumping time of $(Z^V_t)_{t\ge0}$); see the first paragraph of the proof of Theorem \ref{thm} for further details.
Note  obviously that
\begin{align*}
\mathbb W_{\beta,\gamma}(\Phi(\mu),\Phi(\tilde \mu))^{\beta}
&\le\sup_{0\le t\le T}\Big(\e^{-\beta\gamma t} \E\big(\I_{\{0\le  t< \si_1\}}\E\big(|\Upsilon_t^{\mu,\tilde\mu}|^\beta\big|\mathscr G_0\big) \Big)\Big)\\
&\quad+\sup_{0\le t\le T}\Big(\e^{-\beta\gamma t} \sum_{n=1}^\8\E\big(\I_{\{\si_n\le  t<\si_{n+1} \}}\E\big(|\Upsilon_t^{\mu,\tilde\mu}|^\beta\big|\mathscr G_n\big) \big)\Big)\\
&=\Gamma_1(\mu,\tilde\mu)+\Gamma_2(\mu,\tilde\mu),
\end{align*}
where
$$\Upsilon_t^{\mu,\tilde\mu}: =X^{\mu}_t-X^{\tilde\mu}_t \quad \mbox{ and } \quad \mathscr G_n:=\si\{\si_i, 1\le i\le n+1\}\vee \mathscr F_{\si_n}.$$
Thus, to obtain (ii), it remains  to show that there are constants $C_1(T),C_2(T)>0$ such that
\begin{align}\label{EY1}
\Gamma_1(\mu,\tilde\mu)\le  C_1(T)/\gamma^{\frac{\beta}{2}} \mathbb W_{\beta,\gamma}( \mu , \tilde \mu )^{\beta},
\end{align}
and
\begin{align}\label{EY2}
\Gamma_2(\mu,\tilde\mu)\le C_2(T)\big(1/\gamma+ 1/\gamma^{\frac{\beta}{2}}\big)\mathbb W_{\beta,\gamma}( \mu , \tilde \mu )^\beta.
\end{align}

Let $\si_0=\si_{0-}=0.$
For the establishments of \eqref{EY1} and \eqref{EY2}, we start to verify that  for any $n\ge0$ and  $t\in[\si_n,\si_{n+1})$,
\begin{align}\label{ET2-}
 \E(|\Upsilon_t^{\mu,\tilde\mu}|^\beta|\mathscr G_n)\le  \bigg(|\Upsilon_{\si_n}^{\mu,\tilde\mu}|^\beta +L_1^{\frac{\beta}{2}}\Big(\int_{\si_n}^{ t } \mathbb W_\beta(\mu_s,\tilde\mu_s)^2\d s\Big)^{\frac{\beta}{2}}\bigg)\e^{L_1(t-\si_n)}.
\end{align}
Indeed, applying It\^o's formula, in addition to \eqref{EQ},  we obtain   that for any  $t\in[\si_n,\si_{n+1}),$
\begin{align*}
\d |\Upsilon_t^{\mu,\tilde\mu}|^2
&\le L_1\big(|\Upsilon_t^{\mu,\tilde\mu}|^2+\mathbb W_\beta(\mu_t,\tilde\mu_t)^2\big)\,\d t+  \d \bar{M}_t,
\end{align*}
where $\bar{M}_\cdot$ is a martingale. Thus, Gronwall's inequality yields that for any $t\in[\si_n,\si_{n+1}),$
\begin{align*}
 \E(|\Upsilon_t^{\mu,\tilde\mu}|^2|\mathscr G_n)\le  \Big(|\Upsilon_{\si_n}^{\mu,\tilde\mu}|^2 +L_1\int_{\si_n}^{ t } \mathbb W_\beta(\mu_s,\tilde\mu_s)^2\d s\Big)\e^{L_1(t-\si_n)}.
\end{align*}
Whence, \eqref{ET2-} follows from Jensen's inequality.

By means of \eqref{ET2-}, in addition to $X_0^\mu=X_0^{\tilde \mu}$, we have
\begin{align*}
\Gamma_1(\mu,\tilde\mu)&\le L_1^{\frac{\beta}{2}}\e^{L_1T} \sup_{0\le t\le T}\Big(\e^{-\beta\gamma t}  \Big(  \int_0^{ t }  \mathbb W_\beta(\mu_s,\tilde\mu_s)^2\d s\Big)^{\frac{\beta}{2}} \Big)\\
&\le (L_1/(2\gamma))^{\frac{\beta}{2}}\e^{L_1T} \mathbb W_{\beta,\gamma}( \mu , \tilde \mu )^{\beta}.
\end{align*}
Hence, \eqref{EY1} follows right now. Again, by virtue of \eqref{ET2-}, we find that
\begin{align*}
\Gamma_2(\mu,\tilde\mu)&\le\e^{L_1T}\sup_{0\le t\le T}\Big(\e^{-\beta\gamma t} \sum_{n=1}^\8\E\big(\I_{[\si_n,\si_{n+1})}(t)\Big(|\Upsilon_{\si_n}^{\mu,\tilde\mu}|^\beta +L_1^{\frac{\beta}{2}}\Big(\int_{\si_n}^{ t } \mathbb W_\beta(\mu_s,\tilde\mu_s)^2\d s\Big)^{\frac{\beta}{2}}   \Big)\\
&\le \e^{L_1T}\sup_{0\le t\le T}\bigg(\e^{-\beta\gamma t} \sum_{n=1}^\8\E\big(\I_{[\si_n,\si_{n+1})}(t) |\Upsilon_{\si_n}^{\mu,\tilde\mu}|^\beta\big)\bigg)+(L_1/(2\gamma))^{\frac{\beta}{2}}\e^{L_1T} \mathbb W_{\beta,\gamma}( \mu , \tilde \mu )^{\beta}\\
&=:\e^{L_1T} \hat \Gamma_2(\mu,\tilde\mu)+(L_1/(2\gamma))^{\frac{\beta}{2}}\e^{L_1T} \mathbb W_{\beta,\gamma}( \mu , \tilde \mu )^{\beta}.
\end{align*}
 Next, because of
\begin{align}\label{ET4}
|\Upsilon_{\si_n }^{\mu,\tilde\mu}|\le |\Upsilon_{\si_n- }^{\mu,\tilde\mu}|+ |g(X_{\si_n-}^\mu,\mu_{\si_n},p_n)-g(X_{\si_n-}^{\tilde\mu},\tilde\mu_{\si_n },p_n)|,
\end{align}
it follows from \eqref{EQ*}
that
\begin{equation}\label{ET5}
\begin{split}
 |\Upsilon_{\si_n }^{\mu,\tilde\mu}|^\beta
&\le 2  |\Upsilon_{\si_n- }^{\mu,\tilde\mu}|^\beta+8L_1^\beta\big(|\Upsilon_{\si_n- }^{\mu,\tilde\mu}|^\beta+\mathbb W_\beta(\mu_{\si_n },\tilde\mu_{\si_n })^\beta\big)(1+|p_n|^\beta)\\
&\le \xi_n \big( |\Upsilon_{\si_n- }^{\mu,\tilde\mu}|^\beta  + \mathbb W_\beta(\mu_{\si_n },\tilde\mu_{\si_n })^\beta\big),
\end{split}
\end{equation}
where $\xi_n:=2+8 L_1^\beta (1 + |p_n|^\beta))$,
and $\sigma_0=\sigma_{0-}=0$. Due to the stationarity of $(p_n)_{t\ge0}$, one has $c_*:=2+8 L_1^\beta (1 + \E|p_n|^\beta)$
(which is independent of $n$).
Furthermore, applying \eqref{ET2-} and
 \eqref{ET5} repeatedly  yields that
\begin{align*}
 \E\big( \big|\Upsilon_{ \si_n-}^{\mu,\tilde\mu}\big|^\beta\I_{[\si_n,\si_{n+1})}(t)\big) &=\E\big|\Upsilon_{ \si_n-}^{\mu,\tilde\mu}\big|^\beta\E\I_{[\si_n,\si_{n+1})}(t)\\
 &\le \e^{L_1T}\E\bigg(\Big(\big|\Upsilon_{\si_{n-1}}^{\mu,\tilde\mu}\big|^\beta + L_1^{\frac{\beta}{2}}  \Big(\int_{\si_{n-1}}^{\si_n} \mathbb W_\beta(\mu_s,\tilde\mu_s)^2\d s\Big)^{\frac{\beta}{2}}\Big)\I_{[\si_n,\si_{n+1})}(t)\bigg)\\
 &\le \e^{L_1T}\E\bigg(\bigg(\xi_{n-1} \big( |\Upsilon_{\si_{n-1}- }^{\mu,\tilde\mu}|^\beta  + \mathbb W_\beta(\mu_{\si_{n-1} },\tilde\mu_{\si_{n-1} })^\beta\big)\\
 &\qquad\qquad\quad\quad+L_1^{\frac{\beta}{2}}   \Big(\int_{\si_{n-1}}^{\si_n} \mathbb W_\beta(\mu_s,\tilde\mu_s)^2\d s\Big)^{\frac{\beta}{2}}\bigg)\I_{[\si_n,\si_{n+1})}(t)\bigg)\\
 &= \e^{L_1T}\E\bigg(\bigg(c_* \big( |\Upsilon_{\si_{n-1}- }^{\mu,\tilde\mu}|^\beta  + \mathbb W_\beta(\mu_{\si_{n-1} },\tilde\mu_{\si_{n-1} })^\beta\big)\\
 &\qquad\qquad\quad\quad+L_1^{\frac{\beta}{2}}   \Big(\int_{\si_{n-1}}^{\si_n} \mathbb W_\beta(\mu_s,\tilde\mu_s)^2\d s\Big)^{\frac{\beta}{2}}\bigg)\I_{[\si_n,\si_{n+1})}(t)\bigg)\\
 &\le \e^{L_1T}\E\bigg[\bigg(c_*\e^{L_1T}\Big(c_* \big( |\Upsilon_{\si_{n-2}- }^{\mu,\tilde\mu}|^\beta  + \mathbb W_\beta(\mu_{\si_{n-2} },\tilde\mu_{\si_{n-2} })^\beta\big) \\
 &\qquad\qquad\quad\quad\quad\quad+  L_1^{\frac{\beta}{2}} \Big(\int_{\si_{n-2}}^{\si_{n-1}}\mathbb W_\beta(\mu_s,\tilde\mu_s)^2\d s\Big)^{\frac{\beta}{2}}\Big)\\
 &\qquad \quad+c_* \mathbb W_\beta(\mu_{ \si_{n-1} },\tilde\mu_{ \si_{n-1} })^\beta+L_1^{\frac{\beta}{2}} \Big(\int_{\si_{n-1}}^{\si_n} \mathbb W_\beta(\mu_s,\tilde\mu_s)^2\d s\Big)^{\frac{\beta}{2}}\bigg)\I_{[\si_n,\si_{n+1})}(t)\bigg]\\
 &\le\cdots\\
 &\le \e^{L_1T} (c_{*} \e^{L_1T})^{n-1}
 \sum_{i=0}^{n-1}\Big(c_* \E\big( \mathbb W_\beta(\mu_{\si_{i} },\tilde\mu_{\si_{i} })^\beta\I_{[\si_n,\si_{n+1})}(t)\big)\\
  &\qquad\qquad\qquad\quad\qquad\qquad+L_1^{\frac{\beta}{2}} \E\Big(\int_0^t \mathbb W_\beta(\mu_s,\tilde\mu_s)^2\d s\Big)^{\frac{\beta}{2}}\P (N_t=n)\Big),
\end{align*}
where in the first identity we used the independence
of $ \big|\Upsilon_{ \si_n-}^{\mu,\tilde\mu}\big|^\beta$ and $\I_{[\si_n,\si_{n+1})}(t)$; the first inequality is valid by invoking \eqref{ET2-} and the
independent property before as well as the dependence between $\int_{\si_{n-1}}^{\si_n} \mathbb W_\beta(\mu_s,\tilde\mu_s)^2\d s$ and $\I_{[\si_n,\si_{n+1})}(t)$; the second inequality holds true from \eqref{ET5};  in the second identity  we employed that $\xi_{n-1}$ is independent of $ |\Upsilon_{\si_{n-1}- }^{\mu,\tilde\mu}|^\beta\I_{[\si_n,\si_{n+1})}(t)$
and that $(\si_n)_{n\ge1}$ is independent of $(p_n)_{n\ge1}$; in the last inequality
we  used $c_*>1$ and
 $\Upsilon_0^{\mu,\tilde\mu}={\bf 0}$ due to $X_0^\mu=X_0^{\tilde\mu}=X_0.$ Note that $\P(N_t=n)=\e^{-\lambda t}(\lambda t)^n/n!$ for $\lambda:=\nu(\I_V)$ and
recall from \cite[p.\ 8]{CT} that
\begin{align*}
\sum_{k=1}^n\E(\mathbb W_\beta(\mu_{\si_k},\tilde\mu_{\si_k})^\beta|N_t=n)=\frac{n}{t}\int_0^t\mathbb W_\beta(\mu_s,\tilde\mu_s)^\beta\,\d s.
\end{align*}
Subsequently, we find that there exists a constant $C_3(T)>0$ such that
\begin{align*}
\hat \Gamma_2(\mu,\tilde\mu)\le C_3(T)\big(1/\gamma+ 1/(\gamma)^{\frac{\beta}{2}}\big)\mathbb W_{\beta,\gamma}( \mu , \tilde \mu )^\beta.
\end{align*}
This thus implies \eqref{EY2}.
Based on the preceding analysis,  we conclude that $\Phi$ is contractive by choosing  $\gamma>0$ large enough so the statement (ii) follows.

Furthermore, according to \eqref{EQ*} and \eqref{e:welldefined},
$$ \big(\nu(|g(x,\mu,\cdot)|^\beta\I_V(\cdot) )\big)^{\frac{2}{\beta}} \le c_0 (1+|x|^2+\mu(|\cdot|^\be)^{\frac{2}{\beta}} ).$$ This along with \eqref{A21} yields that Assumption (${\bf H}_2'$) is satisfied for the decoupled SDE \eqref{df}. Therefore,
in terms of Theorem \ref{theorem1}, there exists a constant $c_T>0$ such that
$$
\E\Big(\sup_{t\in[0,T]}|X_t^\mu|^\be\Big)\le c_{T}\Big(1+\E|X_0|^\be+\sup_{0\le t\le T}\mu_t(|\cdot|^\beta)\Big).
 $$
Thus, the assertion \eqref{EQ-} follows by  noting that $(X_t)_{t\ge0}$  and $(X_t^{\mu})_{t\ge0}$ with the alternative $\mu_t=\mathscr L_{X_t}$ share the same law on the path space $C([0,T];\R^d)$, and by making use of \eqref{B0}.
\end{proof}

Before the end of this section, we make some  comments.

\begin{remark}\label{Add:re1}\rm
\begin{itemize}
\item[{\rm(i)}]
The proof above is inspired essentially by that of \cite[Theorem 1]{CT} with some essential  modifications.
It is quite natural to  directly derive, via an approximate argument and  It\^o's  formula, that $\Phi$ constructed in the proof of Theorem \ref{theorem2} is contractive. Nevertheless,   some issues might be  encountered when such a direct approach is adopted.
To demonstrate the underlying difficulty, we set $f(x,\mu,z)=f(z)$ and $g(x,\mu,z)\equiv{\bf0}$ for simplicity. Thus, the chain rule, together with \eqref{EQ},  shows formally that for $\beta\in[1,2]$,
\begin{align*}
\d |\Upsilon_t^{\mu,\tilde\mu}|^\beta&=\beta|\Upsilon_t^{\mu,\tilde\mu}|^{\beta-2}\<\Upsilon_t^{\mu,\tilde\mu}, b(X_t^\mu,\mu_t)-b(X_t^{\tilde\mu},\tilde\mu_t)\>\,\d t\\
&\le L_1\beta|\Upsilon_t^{\mu,\tilde\mu}|^{\beta }+L_1\beta|\Upsilon_t^{\mu,\tilde\mu}|^{\beta-2}\mathbb W_\beta(\mu_t,\tilde\mu_t)^2.
\end{align*}
Obviously, the second term in the inequality  above cannot be dominated by the linear combination of $|\Upsilon_t^{\mu,\tilde\mu}|^{\beta}$ and $\mathbb W_\beta(\mu_t,\tilde\mu_t)^\beta$ when, in particular, the quantity $|\Upsilon_t^{\mu,\tilde\mu}|$ approaches to zero.

Additionally,
if \eqref{EQ} is replaced by the following stronger one:
\begin{equation*}
\begin{split}
& 2\langle b(x,\mu_1)-b(y,\mu_2),x-y\rangle+  \nu(|f(x,\mu_1,\cdot)-f(y,\mu_2,\cdot)|^2\I_U(\cdot)) \\
&\le L_1(|x-y|+\W_\be(\mu_1,\mu_2))|x-y|,\quad x,y\in\R^d, \mu_1,\mu_2\in\mathscr P_\beta(\R^d),
\end{split}
\end{equation*}
then the proof concerning the contraction of $\Phi$ will become much more straightforward by the aid of  an approximate argument and  It\^o's  formula. One can see the proof of Theorem \ref{theorem3} in the next subsection for details.
\item[{\rm(ii)}] For the case  $\beta\in (0,1)$,  the proof above no longer  works due to the definition of the
 $L^\beta$-Wasserstein distance $\W_\beta$. In particular, the contractivity of $\Phi$  under $\W_{\be,\ga}$ is unavailable even   for $\gamma>0$ large enough. Nonetheless, concerning the case $\beta\in (0,1)$,   it is possible to demonstrate   existence of the strong solution  via the Schauder fixed point theorem; see \cite[Proposition 1]{CT} for related details.

 \item[{\rm(iii)}] Under Assumptions $({\bf A}_1)$-$({\bf A}_{3})$, we can also derive that, for fixed $T>0$ and any $p\in[1,\beta)$ with $\beta\in(1,2]$,
 there exists a constant $C_T'>0$ such that
 \begin{align*}
 \E\Big(\sup_{0\le t\le T}  |X_t|^p\Big)   \le C_T'(1+\E|X_0|^\be).
\end{align*}
 This can be achieved via the stochastic Gronwall inequality; see the derivation of \eqref{EQ3} in Theorem \ref{theorem3} below for more details.
\end{itemize}

\end{remark}

\subsection{Proof of Theorem \ref{POC}}
In this part, we move to finish the proof of Theorem \ref{POC}.
\begin{proof}[Proof of Theorem $\ref{POC}$] Below, to shorten the notation, for all $t\in [0,T]$ and $1\le i\le n$, we set $Q_t^i:=X_t^i-X_t^{i,n}$
 and  $\tilde{\mu}^n_t:=\frac{1}{n}\sum_{j=1}^{n}\delta_{X_t^j}$ for $t\in[0,T]$. By invoking the triangle inequality and the basic inequality: $(a+b)^p\le 2^{p-1}(a^p+b^p)$ for $a,b\ge0, $ it follows that
for any $t\in[0,T],$
  \begin{equation}\label{RW}
  \begin{split}
\W^p_p(\mu_t,\bar{\mu}^n_t)
&\le 2^{p-1}\big(\W^p_p(\bar{\mu}_t^n,\tilde{\mu}^n_t)+\W^p_p(\mu_t,\tilde{\mu}^n_t)\big)\\
&\le 2^{p-1}\frac{1}{n}\sum_{j=1}^n|Q^j_t|^p+2^{p-1}\W^p_p(\mu_t,\tilde{\mu}^n_t),
\end{split}
\end{equation}
where in the second inequality we exploited the fact that
 $\W^p_p(\bar{\mu}_t^n,\tilde{\mu}^n_t)\le \frac{1}{n}\sum_{j=1}^{n}|Q^j_t|^p $
 since   $\frac{1}{n}\sum_{j=1}^n\delta_{(X_t^j,\bar{X}_t^{j,n})}$ is a coupling of $\bar{\mu}_t$ and $\tilde{\mu}^n_t$. Consequently, the assertion \eqref{wPOC} follows
 from Gronwall's inequality,
 provided that there exist constants $C_1(T), C_2(T)>0$ such that for any $t\in[0,T]$ and $1\le j\le n,$
 \begin{align}\label{EEW}
 \E|Q^j_t|^p\le C_1(T)\int_0^t\E\W^p_p(\mu_s,\tilde{\mu}^n_s)\,\d s
 \end{align}
 and
 \begin{align}\label{EWE}
 \E\W^p_p(\mu_t,\tilde{\mu}^n_t)\le C_2(T)\phi_{p,\beta,d}(n),
 \end{align}
 where $\phi_{p,\beta}(n,d)$ was defined as in \eqref{EE*}, and the number $C_2(T)$ depends on the initial moment $\E|X_0|^\beta.$

Set for $\vv>0$, $r\in \R$ and $ x\in\R^d$,
\begin{align}\label{EE-}
U_{\vv,r}(x):=(\vv+|x|^2)^{\frac{r}{2}}.
\end{align}   It is easy to see that for $\vv>0$, $r\in\R$ and $ x\in\R^d$,
\begin{align}\label{EE--}
\nn U_{\vv,r}(x)=rxU_{\vv,r-2}(x).
\end{align} Thus,
we obtain  from It\^o's formula and $({\bf A}_{1}')$ that there exist constants $c_1,c_2>0$ such that for any $t\in[0,T]$ and $\vv>0,$
\begin{equation}\label{WT}
\begin{split}
U_{\vv,p}(Q_t^i)&\le \vv^{\frac{p}{2}}+\hat{M}_t^i\\
&\quad+ c_1\int_0^t\Big(
 U_{\vv,p-2}(Q_s^i)|Q_s^i| \big(|Q_s^i|+\W_p(\mu_s,\bar{\mu}^n_s)\big) +|Q_s^i|^p+\W_p^p(\mu_s,\bar{\mu}^n_s)\Big)\,\d s\\
 &\le \vv^{\frac{p}{2}}+c_2\int_0^t\big(
 U_{\vv,p}(Q_s^i)+\frac{1}{n}\sum_{j=1}^n|Q^j_s|^p+\W^p_p(\mu_s,\tilde{\mu}^n_s)\big)\,\d s+\hat{M}_t^i ,
\end{split}
\end{equation}
where $(\hat{M}
_t^i)_{t\ge0}$ is a local martingale. In particular, the first inequality in \eqref{WT} follows
exactly  the  line to derive \eqref{R} below and we also
took advantage of $X_0^i=X_0^{i,n}$ herein,
and we
 made use of Young's inequality and \eqref{RW} in the second inequality.
Then, taking expectations on both sides of \eqref{WT} followed by sending $\vv\to0$,
the estimate \eqref{WT} enables us to deduce that for any $t\in[0,T],$
\begin{align*}
 \max_{1\le i\le N}\E|Q^i_t|^p
\le 2c_2\int_0^t\max_{1\le i\le N}\E|Q^i_s|^p\,\d t+c_2 \int_0^t\E\W^p_p(\mu_s,\tilde{\mu}^n_s)\,\d s.
\end{align*}
Whence, \eqref{EEW} follows from  Gronwall's  inequality.

In terms of \cite[Theorem 1]{FG}, for all $1\le p<\beta$, there exists a constant $c_3>0$ such that
\begin{align*}
 \E\W^p_p(\mu_t,\tilde{\mu}^n_t)\le c_3 \big(\E|X_t^i|^\beta\big)^{\frac{p}{\beta}}\phi_{p,\beta,d}(n),\quad t\in[0,T].
\end{align*}
As a result, \eqref{EWE} is available by taking \eqref{B0} into account.

Next, by applying the stochastic Gronwall inequality (see e.g. \cite[Lemma 3.7]{XZ}) and then approaching $\vv\to0$, we derive from \eqref{WT} that for any $0<q_1<q_2<1$,
\begin{align*}
\Big(\E\Big(\sup_{0\le s\le t}|Q_t^i|^{pq_1}\Big)\Big)^{\frac{1}{q_1}}\le c_2\Big( \frac{q_2}{q_2-q_1}\Big)^{\frac{1}{q_1}}\e^t \int_0^t\Big(
\frac{1}{n}\sum_{j=1}^n\E|Q^j_s|^p+\E\W^p_p(\mu_s,\tilde{\mu}^n_s)\Big)\,\d s.
\end{align*}
This, together with \eqref{EEW} and \eqref{EWE}, implies that there exists a constant $C_3(T)>0$ such that for any $0<q_1<q_2<1$ and $t\in[0,T],$
\begin{align*}
 \E\Big(\sup_{0\le s\le t}|Q_t^i|^{pq_1}\Big) \le  \frac{q_2}{q_2-q_1} \big(C_3(T)\phi_{p,\beta,d}(n)\big)^{q_1}.
\end{align*}
Thus, \eqref{sPOC} follows immediately.
\end{proof}

To proceed, we make a comment on  the method  proving   Theorem \ref{POC}.
\begin{remark}
\rm   By applying  It\^o's formula and BDG's inequality, along with \cite[Theorem 1]{FG},
we can also prove \eqref{sPOC} with $pq_1$ therein being replaced by $\beta$
as soon as the order of the initial moment is  greater than $\beta$. In this regard, the methods based respectively on the stochastic Gronwall inequality and BDG's inequality share the same feature.  Regarding the latter approach, one further needs to handle the term $\E(\sup_{0\le t\le T}M_t^i)$, where
\begin{align*}
M_t^i:=\int_0^t\int_U\big(|X^{i,n}_{s-}-X_{s-}^i+f(X^{i,n}_{s-}, z)-f(X^{i}_{s-}, z)|^p-|X^{i,n}_{s-}-X_{s-}^i|^p\big)\,\wt N^i(\d s,\d z).
\end{align*}
To this end, some additional assumptions  associated with $f$ (e.g., $f$ is Lipschitz in the state variable) have to be  exerted   provided that  mere \eqref{EQ4} is imposed.
\end{remark}

At the end of this part, we present the  proof of the  statement in Remark \ref{EX}(ii).

\begin{proof}[Proof of Remark $\ref{EX}${\rm(ii)}]
Below, we stipulate $x,y\in\R^d$ and $\mu_1,\mu_2\in\mathscr P_\beta(\R^d)$.
It is easy to see that
\begin{align*}
&2\<b(x,\mu_1)-b(y,\mu_2), x-y\>+  \nu(|f(x,\mu_1,\cdot)-f(y,\mu_2,\cdot)|^2\I_U(\cdot))\\
&\le 2\<x-y, C_1 (x-y)-C_2 (x|x|^2-y|y|^2)\>\\
&\quad+2d^{\frac{1}{2}}|x-y|\big|\mu_1(|h(x-\cdot)|^\beta)^{\frac{1}{\beta}} -\mu_2(|h(x-\cdot)|^\beta)^{\frac{1}{\beta}}  \big|\\
&\quad+C_3^2\nu(|\cdot|^2\I_U(\cdot))\big(C_4|x|^2-C_4|y|^2+ \mu_1(|h(x-\cdot)|^\beta)^{\frac{1}{\beta}} -\mu_2(|h(x-\cdot)|^\beta)^{\frac{1}{\beta}}  \big)^2
\end{align*}
and that for $z\in\R^d,$
\begin{align*}
|g(x,\mu_1,z)-g(y,\mu_2,z)| =|{\bf 1}+ z |\big||x|-|y|+ \mu_1(|h(x-\cdot)|^\beta)^{\frac{1}{\beta}} -\mu_2(|h(x-\cdot)|^\beta)^{\frac{1}{\beta}}  \big|.
\end{align*}

Via the Minkowski inequality, one obviously has
\begin{equation}\label{EW-}
\begin{split}
&\big| \mu_1(|h(x-\cdot)|^\beta)^{\frac{1}{\beta}} -\mu_2(|h(x-\cdot)|^\beta)^{\frac{1}{\beta}}  \big|\\
&=\Big|\Big(\int_{\R^d\times\R^d}|h(x-z_1)|^\beta\pi(\d z_1,\d z_2)\Big)^{\frac{1}{\beta}}-\Big(\int_{\R^d\times \R^d}|h(x-z_2)|^\beta\pi(\d z_1,\d z_2)\Big)^{\frac{1}{\beta}}\Big|\\
&\le \Big(\int_{\R^d\times\R^d}|h(x-z_1)-h(x-z_2)|^\beta\pi(\d z_1,\d z_2)\Big)^{\frac{1}{\beta}}\\
&\le \|h\|_{\rm Lip} \Big(\int_{\R^d\times\R^d}| z_1 - z_2 |^\beta\pi(\d z_1,\d z_2)\Big)^{\frac{1}{\beta}},
\end{split}
\end{equation}
where $\|h\|_{\rm Lip}$ means the Lipschitz constant of $h$, and $\pi\in\mathscr C(\mu_1,\mu_2)$.  Thus, taking infimum with respect to $\pi$ on both sides  of \eqref{EW-} yields that
\begin{align}\label{EW}
 \big| \mu_1(|h(x-\cdot)|^\beta)^{\frac{1}{\beta}} -\mu_2(|h(x-\cdot)|^\beta)^{\frac{1}{\beta}}  \big| \le \|h\|_{\rm Lip} \mathbb W_\beta(\mu_1,\mu_2).
\end{align}
Next, a direct calculation shows that
\begin{align*}
-\<x-y,x|x|^2-y|y|^2\>\le (1- (|x|^2+|y|^2)/6 )|x-y|^2,
\end{align*}
and it is easy to see that
\begin{align*}
(|x|^2-|y|^2)^2\le 2(|x|^2+|y|^2)|x-y|^2.
\end{align*}
Consequently, \eqref{EQ} follows from the basic inequality: $2ab\le a^2+b^2, a,b\in\R$, and making use of $\nu(|\cdot|^2\I_U(\cdot))<\8$
and
$C_2>12C_3^2C_4^2\nu(|\cdot|^2\I_U(\cdot))$. Apparently, \eqref{EQ*} is verifiable based on \eqref{EW}. Therefore, Assumption (${\bf A}_1$) is examinable. In terms of  definitions of $b$ and $f$ ,
it is easy to see that (${\bf A}_2$) is satisfied in case of $\nu(|\cdot|^2\I_U(\cdot))<\8$
and
$C_2>12C_3^2C_4^2\nu(|\cdot|^2\I_U(\cdot))$,  and that (${\bf A}_3$) holds true readily.    The proof is thus complete.
\end{proof}

\section{Extension to McKean-Vlasov SDEs with common noise}
In this section, we
consider the following McKean-Vlasov SDE with common noise:
\begin{equation} \label{common}
\begin{split}
\d X_t=&b(X_t,\scr{L}_{X_t|\scr{F}_t^{N^0}})\,\d t+\int_U f( X_{t-} ,z)
\,\wt N(\d t,\d z)\\
&+\int_V g(X_{t-},\scr{L}_{X_t|\scr{F}_t^{N^0}},z)
\,N(\d t,\d z)+\int_U f^0(X_{t-},z)
\,\wt N^0(\d t,\d z)\\
&+\int_V g^0(X_{t-},\scr{L}_{X_t|\scr{F}_t^{N^0}},z)
\,N^0(\d t,\d z),
\end{split}
\end{equation}
where $b:\R^{d}\times\scr{P}(\R^{d})\to\R^{d}$, $f,f^0:\R^d\times \R^d\to\R^d, $
and
$g,g^0:\R^{d}\times\scr{P}(\R^{d})\times \R^d\to\R^{d}$
are measurable maps; $U,V\subset \R^d\setminus \{{\bf0}\}$ so that $U\cap V=\emptyset$;
Poisson measures $N(\d t, \d z)$ and $N^0(\d t, \d z)$  correspond to the idiosyncratic noise and the common noise with L\'evy measure $\nu$ and $\nu^0$, respectively,  while $\wt N(\d t, \d z)$ and $\wt N^0(\d t, \d z)$ are their associated compensated Poisson measures.

To distinguish between the  underlying sources of randomness, we introduce complete probability spaces $(\Om^1,\scr F^1,\P^1)$ and $(\Om^0,\scr F^0,\P^0)$, whose respective filtrations $(\scr F^1_t)_{t\ge0}$ and $(\scr F^0_t)_{t\ge0}$ satisfy the usual conditions. In \eqref{common}, $N(\d t, \d z)$ and $N^0(\d t, \d z)$ shall be supported respectively on $\Om^1\times\R_+\times\R^d$ and $\Om^0\times\R_+\times\R^d$.
Throughout this section, we shall work on the product probability space $(\Om,\scr F,\F,\P)$, where $\Om:=\Om^0\times\Om^1$, $(\scr F,\P)$ is the completion of $(\scr F^0\otimes\scr F^1,\P^0\otimes\P^1)$ and $\F$ is the complete and right-continuous argumentation of $(\scr F^0_t\otimes\scr F^1_t)_{t\ge0}$.
Moreover, we write $\E^0$, $\E^1$ and $\E$ as the expectations on $(\Om^0,\scr F^0,\P^0)$, $(\Om^1,\scr F^1,\P^1)$ and $(\Om,\scr F,\F,\P)$,  respectively.
Note that $\scr{L}_{X_t|\scr{F}_t^{N^0}}$ represents the conditional distribution with respect to the $\si$-algebra $\scr{F}_t^{N^0}:=\si\{Z_s^0:s\le t\}$, in which  $$Z_t^0:=\int_0^t\int_U z \,\wt N^0(\d s,\d z)+\int_0^t\int_V z \,N^0(\d s,\d z).$$
Furthermore, in the subsequent analysis, we shall assume that the initial value $X_0$ is an $\scr F^1_0$-measurable random variable, so $(Z^0_t)_{t\ge0}$ is the solely common source of noise.

\subsection{Well-posedness of  McKean-Vlasov SDEs with common noise}

To carry out the study on the well-posedness of the SDE \eqref{common}, we impose the following assumptions.
\begin{enumerate}\it
  \item [{\rm(${\bf B}_0$)}] there is $\beta\in[1,2]$ so that
  \begin{align*}
 \nu(|f({\bf0},\cdot)|^2\I_V(\cdot))+\nu^0(|f^0({\bf0},\cdot)|^2\I_V(\cdot))&+\nu((1\vee|\cdot|^\beta \vee|g({\bf0,\delta_{\bf0}},\cdot)|^\beta\I_V(\cdot))\\
   &+\nu^0((1\vee|\cdot|^\beta \vee|g^0({\bf0,\delta_{\bf0}},\cdot)|^\beta\I_V(\cdot))<\8;
  \end{align*}
  \item [{\rm(${\bf B}_1$)}]
for  fixed $\mu\in\scr{P}_\be(\R^d)$ and $z\in\R^d$,   $\R^d \ni  x \mapsto b(x,\mu)$, $\R^d\ni x\mapsto f(x,\mu,z)$  and $\R^d\ni x\mapsto f^0(x,\mu,z)$ are continuous and locally bounded, and there exists a constant $K_1>0$ such that
for any   $x,y,z\in\R^d$,
 and $\mu_1,\mu_2\in\scr{P}_\be(\R^d)$,
\begin{equation}\label{EQ1}
\begin{split}
& 2\langle b(x,\mu_1)-b(y,\mu_2),x-y\rangle+  \nu(|f(x,\cdot)-f(y,\cdot)|^2\I_U(\cdot)) \\
& +\nu^0(|f^0(x,\cdot)-f^0(y,\cdot)|^2\I_U(\cdot)) \le K_1|x-y|(|x-y|+\W_\be(\mu_1,\mu_2))
\end{split}
\end{equation}
and
\begin{equation}\label{EQ2}
\begin{split}
&|g(x,\mu_1,z)-g(y,\mu_2,z)|+|g^0(x,\mu_1,z)-g^0(y,\mu_2,z)|\\
& \le K_1(1+|z|)(|x-y| +\W_\be(\mu_1,\mu_2) );
\end{split}
\end{equation}

  \item [{\rm(${\bf B}_2$)}] there exists a constant $K_2>0$ such that for any $x\in\R^d$ and  $\mu\in\scr{P}_\be(\R^d)$,
\begin{align*}
2&\<x,b(x,\mu)\>
 +\nu(|f(x,\cdot)|^2\I_U(\cdot))
 +\nu^0(|f^0(x,\cdot)|^2\I_U(\cdot))\\
 &\le K_2 \big(1+|x|^2+|x|\mu(|\cdot|^\be)^{\frac{1}{\beta} } \big);
\end{align*}

  \item [{\rm(${\bf B}_3$)}]for any $T,R>0$ and $\mu\in C([0,T];\mathscr P_\beta(\R^d))$,
\begin{align*}
\int_0^T \bigg(&\sup_{\{|x|\le R\}} |b(x,\mu_t)| +\int_U \sup_{\{|x|\le R\}}|f (x, z)|^2\nu(\d z)+\int_U \sup_{\{|x|\le R\}}|f^0 (x, z)|^2\nu^0(\d z)\bigg) \,\d t<\8.
\end{align*}
\end{enumerate}

The main result in this part is stated as follows.
\begin{theorem}\label{theorem3}
Assume that Assumptions $({\bf B}_0)$-$({\bf B}_{3})$  hold, and suppose further $X_0\in L^\beta(\Omega^1\to\R^d,\mathscr F^1_0,\P^1)$.
Then, the McKean-Vlasov  SDE with common noise \eqref{common} admits a unique strong solution $(X_t)_{t\ge0}$ satisfying that,
for any fixed $T>0$, there exists a constant $C_T>0$ such that
\begin{align}\label{B1}
\E|X_t|^\beta \le C_T(1+\E|X_0|^\be),\quad 0\le t\le T.
\end{align}
Furthermore,
if $\beta\in (1,2]$, then for all $p\in [1,\beta)$ and $T>0$,
 there exists a constant $C_T'>0$ such that
 \begin{align}\label{EQ3}
 \E\Big(\sup_{0\le t\le T}  |X_t|^p\Big)   \le C_T'(1+\E|X_0|^\be).
\end{align}
\end{theorem}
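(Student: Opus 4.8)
The plan is to reproduce the Banach fixed-point scheme of Theorem \ref{theorem2}, but carried out \emph{conditionally on the common noise}. Fix a horizon $T>0$ and let $\scr H^{X_0}_T$ be the set of $(\scr F^{N^0}_t)$-adapted flows $\mu=(\mu_t)_{0\le t\le T}$ of random probability measures on $\R^d$ with $\mu_0=\scr L_{X_0}$ that are $\P^0$-a.s.\ weakly continuous in $t$ and satisfy $\sup_{0\le t\le T}\E^0\big(\mu_t(|\cdot|^\be)\big)<\8$, metrized by
\[
\W_{\be,\ga}(\mu,\tilde\mu):=\Big(\sup_{0\le t\le T}\e^{-\be\ga t}\,\E^0\big(\W_\be(\mu_t,\tilde\mu_t)^\be\big)\Big)^{1/\be},\qquad\ga\ge0.
\]
Since $\W_\be$ metrizes a Polish topology on $\scr P_\be(\R^d)$ and, by the triangle inequality, the $\be$-moment of a $\W_{\be,\ga}$-limit is dominated by that of any sequence element plus a finite distance, $(\scr H^{X_0}_T,\W_{\be,\ga})$ is complete, exactly as for $(\scr C^{X_0}_T,\W_{\be,\ga})$ in the proof of Theorem \ref{theorem2} but with the extra layer $\E^0$. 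For $\mu\in\scr H^{X_0}_T$ I would consider the decoupled SDE obtained from \eqref{common} by replacing $\scr L_{X_t|\scr F^{N^0}_t}$ by $\mu_t$; merging $N|_U$ with $N^0|_U$ into one Poisson random measure on $\R_+\times(U\sqcup U)$ (and likewise the two big-jump parts), this SDE is of the form \eqref{e} with progressively measurable coefficients $b_t(x)=b(x,\mu_t)$, etc.\ (progressive measurability because $\mu_t$ is $\scr F^{N^0}_t$-measurable). By \eqref{EQ1} it satisfies $({\bf H}_1)$, by $({\bf B}_2)$ together with the growth of $g,g^0$ read off from \eqref{EQ2} and $({\bf B}_0)$ it satisfies the two-noise analogue of $({\bf H}_2)$ with the \emph{random, adapted} coercivity process $t\mapsto c\big(1+\mu_t(|\cdot|^\be)^{2/\be}\big)$, and $({\bf B}_3)$ gives $({\bf H}_3)$. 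Since the proof of Theorem \ref{thm} is built only from Picard iteration, a.s.\ pathwise stopping-time localization, It\^o's formula and Gronwall-type estimates, it carries over verbatim to progressively measurable coefficients with a random coercivity process, producing a unique strong solution $(X^\mu_t)_{0\le t\le T}$.

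Next I would set $(\Phi(\mu))_t:=\scr L_{X^\mu_t|\scr F^{N^0}_t}$, which is $\scr F^{N^0}_t$-measurable since $X^\mu_t$ is $\scr F_t$-measurable, and check the two hypotheses of the fixed point theorem. For the self-map property, applying It\^o's formula to $U_{1,\be}(X^\mu_t)=(1+|X^\mu_t|^2)^{\be/2}$, using $({\bf B}_2)$, the concavity bound $F_\be''<0$ as in \eqref{E3}, the growth of $g,g^0$ together with $({\bf B}_0)$, Young's inequality and Gronwall's lemma, one gets $\sup_{0\le t\le T}\E|X^\mu_t|^\be\le C_T\big(1+\E|X_0|^\be+\sup_{0\le t\le T}\E^0(\mu_t(|\cdot|^\be))\big)<\8$, so $\E^0\big((\Phi(\mu))_t(|\cdot|^\be)\big)=\E|X^\mu_t|^\be$ is bounded on $[0,T]$ and $\Phi(\mu)\in\scr H^{X_0}_T$; the $\P^0$-a.s.\ weak continuity of $t\mapsto(\Phi(\mu))_t$ follows as in \eqref{WR} with $\E$ replaced by $\E(\,\cdot\,|\,\scr F^{N^0}_t)$. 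For contractivity, since $(X^\mu_t,X^{\tilde\mu}_t)$ is conditionally on $\scr F^{N^0}_t$ a coupling of $(\Phi(\mu))_t$ and $(\Phi(\tilde\mu))_t$, one has $\E^0\big(\W_\be((\Phi(\mu))_t,(\Phi(\tilde\mu))_t)^\be\big)\le\E|X^\mu_t-X^{\tilde\mu}_t|^\be$. Writing $\Upsilon_t:=X^\mu_t-X^{\tilde\mu}_t$ and applying It\^o's formula to $U_{\vv,\be}(\Upsilon_t)$, the drift-and-small-jump part of the generator is bounded — via the concavity trick and \eqref{EQ1}, whose right-hand side is the homogeneous mixed form $K_1|x-y|(|x-y|+\W_\be(\mu_1,\mu_2))$, which is exactly what makes the computation direct (compare Remark \ref{Add:re1}(i)) — by $C\,U_{\vv,\be}(\Upsilon_t)+C\,\W_\be(\mu_t,\tilde\mu_t)^\be$, while the $N|_V$- and $N^0|_V$-driven jump contributions are controlled by \eqref{EQ2} and $({\bf B}_0)$ in the same way; taking expectations, letting $\vv\downarrow0$ (so $U_{\vv,\be}(\Upsilon_0)=\vv^{\be/2}\to0$ because $\Upsilon_0={\bf0}$), and using Gronwall's lemma give $\E|\Upsilon_t|^\be\le C\e^{Ct}\int_0^t\E\W_\be(\mu_s,\tilde\mu_s)^\be\,\d s$ with $C=C(K_1,\nu(\I_V),\nu^0(\I_V))$ independent of $\mu,\tilde\mu$. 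Combining this with $\e^{-\be\ga t}\int_0^t\e^{\be\ga s}\E\W_\be(\mu_s,\tilde\mu_s)^\be\,\d s\le(\be\ga)^{-1}\W_{\be,\ga}(\mu,\tilde\mu)^\be$ yields $\W_{\be,\ga}(\Phi(\mu),\Phi(\tilde\mu))^\be\le(C\e^{CT}/(\be\ga))\,\W_{\be,\ga}(\mu,\tilde\mu)^\be$, a contraction for $\ga$ large. The Banach fixed-point theorem then gives a unique $\mu\in\scr H^{X_0}_T$ with $\mu_t=\scr L_{X^\mu_t|\scr F^{N^0}_t}$, i.e.\ a unique strong solution of \eqref{common}; inserting this fixed point into the moment bound above (so $\E^0(\mu_t(|\cdot|^\be))=\E|X_t|^\be$) and applying Gronwall's lemma once more gives \eqref{B1}.

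For \eqref{EQ3} I would fix $\be\in(1,2]$, $p\in[1,\be)$, choose $p'\in(p,\be)$, and set $q_1:=p/p'\in(0,1)$ with $q_1<q_2<1$. It\^o's formula for $U_{1,p'}(X_t)=(1+|X_t|^2)^{p'/2}$, together with $({\bf B}_2)$ and the concavity trick, the growth of $g,g^0$ and Young's inequality — here $p'<\be\le2$ ensures $\nu(|\cdot|^{p'}\I_V),\nu^0(|\cdot|^{p'}\I_V)<\8$ by $({\bf B}_0)$, so all big-jump increments are integrable — yields $U_{1,p'}(X_t)\le U_{1,p'}(X_0)+C\int_0^t\big(1+U_{1,p'}(X_s)+\mu_s(|\cdot|^\be)\big)\,\d s+M_t$ with $\mu_s:=\scr L_{X_s|\scr F^{N^0}_s}$ and $(M_t)$ a local martingale. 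The stochastic Gronwall inequality \cite[Lemma 3.7]{XZ} (as already used for Theorem \ref{POC}) then gives $\E\big(\sup_{0\le t\le T}U_{1,p'}(X_t)^{q_1}\big)\le\frac{q_2}{q_2-q_1}\big(C\,\E U_{1,p'}(X_0)+C\int_0^T(1+\E\mu_s(|\cdot|^\be))\,\d s\big)^{q_1}$, and since $\E\mu_s(|\cdot|^\be)=\E|X_s|^\be\le C_T(1+\E|X_0|^\be)$ by \eqref{B1}, $\E U_{1,p'}(X_0)\le C(1+\E|X_0|^\be)$ by Jensen ($p'<\be$), and $|X_t|^p\le U_{1,p'}(X_t)^{q_1}$, the estimate \eqref{EQ3} follows. (The loss of a power $q_1<1$ is why one only gets $p<\be$ under the weak coercivity $({\bf B}_2)$, paralleling the role of \eqref{A21} for \eqref{EQ-} in Theorem \ref{theorem2}.)

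I expect the main obstacle to be the \emph{conditional bookkeeping} rather than any single estimate: one must justify that the classical well-posedness of Theorem \ref{thm} genuinely survives the passage to progressively measurable coefficients with a random coercivity process, that $\Phi$ really maps $\scr H^{X_0}_T$ into itself — in particular that the conditional law $\scr L_{X^\mu_t|\scr F^{N^0}_t}$ defines an $(\scr F^{N^0}_t)$-adapted, a.s.\ weakly continuous measure-valued flow with the required $\E^0$-moment — and that the metric $\W_{\be,\ga}$ built from $\E^0\W_\be(\cdot,\cdot)^\be$ is the right one for the conditional coupling inequality to convert a pathwise bound on $\E|X^\mu_t-X^{\tilde\mu}_t|^\be$ into contractivity of $\Phi$. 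By contrast, the a priori and contraction estimates themselves are lighter than in Theorem \ref{theorem2}, precisely because \eqref{EQ1} is already in the homogeneous mixed form $K_1|x-y|(|x-y|+\W_\be)$, which permits running It\^o's formula on $U_{\vv,\be}(X^\mu_t-X^{\tilde\mu}_t)$ directly instead of through the jump-by-jump recursion used there.
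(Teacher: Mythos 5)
Your proposal is correct and follows essentially the same route as the paper: the same conditional path space $\scr D^{X_0}_T$ with metric built from $\E^0\W_\be^\be$, the same fixed-point map $\mu\mapsto\scr L_{X^\mu_t|\scr F_t^{N^0}}$ via the random-coefficient decoupled SDE and Theorem \ref{thm}, the same direct It\^o/Gronwall contraction estimate made possible by the homogeneous form of \eqref{EQ1}, and the same stochastic Gronwall argument for \eqref{EQ3}. The only cosmetic deviation is your intermediate exponent $p'\in(p,\be)$ in the last step, where the paper applies the stochastic Gronwall inequality directly to the $\be$-power estimate \eqref{EP1} with $q_1=p/\be$; both are equivalent.
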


\begin{proof}  To begin with, we introduce some  notations. Let
$$L^{0,\beta}(\scr P_\beta(\R^d)):=\big\{\mu:\Omega^0\to\scr P_\beta(\R^d)\big|\, \E^0(\mu(|\cdot|^\beta))<\8\big\}.$$
Then $\big(L^{0,\be
}(\scr P_\beta(\R^d) ),\mathcal W_\be
\big)$
is a complete metric space (see e.g. \cite[Lemma 1.2]{KNRS}) endowed  with the metric:
$$
\mathcal W_\beta(\mu_1,\mu_2):=\big(\E^0\W_\beta^\beta(\mu_1,\mu_2)\big)^{\frac{1}{\beta}},\quad \mu_1,\mu_2\in L^{0,\beta}(\scr P_\beta(\R^d) ),
$$
so  $C\big([0,T];L^{0,\be
}(\scr P_\be
(\R^d))\big)$ is also a complete metric space for any fixed $T>0$.   In addition, we set for a fixed horizon $T>0,$
$$
\scr{D}^{X_0}_{T}=\Big\{\mu\in C([0,T];L^{0,\beta}(\scr{P}_\be(\R^{d}))): \mu_0=\scr{L}_{X_0},\sup_{t\in[0,T]}\E^0(\mu_t(|\cdot|^\beta))<\8\Big\},
$$
in which $X_0\in L^\be(\Omega^1\to\R^d,\scr{F}^1_0,\P^1)$ is the initial value of $(X_t)_{t\ge0}$, and
\begin{align*}
C([0,T];L^{0,\beta}(\scr{P}_\be(\R^{d}))):=\big\{\mu:[0,T]\times\Omega^0\to\scr P_\beta(\R^d) \mbox{ is weakly continuous}\big\}.
\end{align*}
 For $\eta>0,$ $(\scr{D}^{X_0}_{T}, \mathcal W_{\be,\eta})$ is a complete metric space equipped with the metric
$$
\mathcal W_{\be,\eta}(\mu,\widetilde{\mu}):=\sup_{0\le t\le T}\big(\e^{-\eta t}\mathcal W_\be(\mu_t,\widetilde{\mu}_t)\big), \quad \mu,\widetilde{\mu}\in\scr{D}^{X_0}_{T}.
$$

For $\mu\in \scr{D}^{X_0}_{T}$, we focus on the following SDE with random coefficients:
\begin{equation} \label{common1}
\begin{split}
X^\mu_t=&X_0+\int_0^t b(X^\mu_s,\mu_s)\,\d t+\int_0^t\int_U f(X^\mu_{s-}, z)\,\wt N(\d s,\d z)\\
&+\int_0^t\int_V g(X^\mu_{s-},\mu_s,z)\,N(\d s,\d z)+\int_0^t\int_U f^0(X^\mu_{s-}, z)\,\wt N^0(\d s,\d z)\\
&+\int_0^t\int_V g^0(X^\mu_{s-},\mu_s,z)\,N^0(\d s,\d z).
\end{split}
\end{equation}
Under $({\bf B}_0)$-$({\bf B}_{3})$, for each $\mu \ni \scr{D}^{X_0}_{T}$, \eqref{common1}  has a unique solution $(X_t^\mu)_{t\ge0}$ with the aid of Theorem \ref{thm} (which is  still available  to the SDE \eqref{e} with random coefficients). Accordingly, we
can  define a map $\scr{D}^{X_0}_{T}\ni\mu\mapsto\Ga(\mu)$ by
\begin{equation}\label{Ga}
(\Ga(\mu))_t=\scr{L}_{X^{\mu}_t|\scr F_t^{N^0}}, \quad t\in[0,T].
\end{equation}

By It\^o's formula, it follows from $({\bf B}_0)$, \eqref{EQ2} and $({\bf B}_2)$ that for some $C_1,C_2>0,$
\begin{equation}\label{EP1}
\begin{split}
\d(1+|X_t^\mu|^2)^{\frac{\beta}{2}}&\le \frac{K_2\beta}{2}(1+|X_t^\mu|^2)^{\frac{\beta}{2}-1}  \big(1+|X_t^\mu|^2+ |X_t^\mu|\mu_t(|\cdot|^\be)^{\frac{1}{\beta} }  \big)\,\d t\\
&\quad+C_1(1+|X_t^\mu|^\beta+\mu_t(|\cdot|^\beta))\,\d t+ \d\hat M_t^\mu
\\
&\le C_2\big((1+|X_t^\mu|^2)^{\frac{\beta}{2}}+\mu_t(|\cdot|^\beta)\big)\,\d t+ \d\hat M_t^\mu,
\end{split}
\end{equation}
where $(\hat M_t^\mu)_{t\ge0}$ is a martingale.  Then, via Gronwall's inequality, we have
\begin{align*}
\E|X_t^\mu|^\beta\le\Big(\E^1 (1+|X_0^\mu|^2)^{\frac{\beta}{2}}+C_2\int_0^T\E^0\mu_t(|\cdot|^\beta)\,\d t\Big)\e^{C_2T},\quad t\in[0,T].
\end{align*}
This, together with the fact that
\begin{align*}
\E^0((\Ga(\mu))_t(|\cdot|^\be))
 = \E^0\big(\E^1(|X_t^\mu|^\be|\scr F_t^{N^0})\big) = \E|X_t^\mu|^\be,
\end{align*}
implies that for $\mu\in\scr{D}^{X_0}_{T}$,
\begin{align*}
\sup_{0\le t\le T}\E^0\big((\Ga(\mu))_t(|\cdot|^\be)\big)<\8.
\end{align*}
 Next,  note that for any $h\in \mbox{Lip}_b(\R^d)$ and $t\in[0,T]$,
\begin{align*}
\big|\E^0\big((\Ga(\mu))_t(h(\cdot))\big)-\E^0\big((\Ga(\mu))_0(h(\cdot))\big)\big|=&
\E^0\big|\E^1\big((h(X_t^\mu ) -h(X_0^\mu))\big|\scr F_t^{N^0}\big)\big|\\
\le & \E|h(X_t^\mu ) -h(X_0^\mu)|.
\end{align*}
Thus,  we can conclude $\Ga(\mu)\in C([0,T];L^{0,\beta}(\scr{P}_\be(\R^{d})))$ by following the line to derive \eqref{WR} so that we arrive at
$\Ga(\mu)\in\scr{D}^{X_0}_{T}$.

In the sequel, we shall claim  that
$\Ga$ is contractive under $\mathcal W_{\be,\eta}$  for some appropriate $\eta>0$. According to \eqref{common1}, for $R^{\mu,\tilde{\mu}}_t:=X_t^\mu-X_t^{\tilde{\mu}}$ with  $\mu,\tilde{\mu}\in\scr{D}^{X_0}_{T}$,  we have
\begin{align*}
\d R^{\mu,\tilde{\mu}}_t=&B_t\,\d t+\int_U F_t(z)\,\wt N(\d t,\d z)+\int_V G_t(z)\,N(\d s,\d z)\\
&+\int_U F^0_t(z)\,\wt N^0(\d t,\d z)+\int_V G^0_t(z)\,N^0(\d s,\d z),
\end{align*}
where
$B_t:= b(X^\mu_t,\mu_t)- b(X^{\tilde\mu}_t,\tilde{\mu}_t)$ and
$$F_t(z):=f(X_{t-}^\mu, z)-f(X_{t-}^{\tilde{\mu}}, z), \quad F^0_t(z):=f^0(X_{t-}^\mu, z)-f^0(X_{t-}^{\tilde{\mu}}, z),$$
$$G_t(z):=g(X_{t-}^\mu,\mu_t,z)-g(X_{t-}^{\tilde{\mu}},\tilde{\mu}_t,z), \quad G^0_t(z):=g^0(X_{t-}^\mu,\mu_t,z)-g^0(X_{t-}^{\tilde{\mu}},\tilde{\mu}_t,z).$$

Recall that $U_{\vv,\be}$ is defined as in \eqref{EE-}. Then, applying It\^o's formula  and making use of  $({\bf B}_{1})$ and \eqref{EE--}   yield that
\begin{equation}\label{EY4}
\begin{split}
\d U_{\vv,\be}(R^{\mu,\tilde{\mu}}_t)
&\le \frac{\be}{2}U_{\vv,\beta-2}(R^{\mu,\tilde{\mu}}_t)\big(2\la R^{\mu,\tilde{\mu}}_t, B_t\ra+\nu(|F_t(\cdot)|^2\I_U(\cdot))+\nu^0(|F^0_t(\cdot)|^2\I_U(\cdot))\big)\,\d t\\
&\quad +2^{\frac{\beta}{2}} \big(\nu(|G_t(\cdot)|^\be\I_V(\cdot))+\nu^0(|G^0_t(\cdot)|^\be\I_V(\cdot))\big)\,\d t\\
&\quad +|R^{\mu,\tilde{\mu}}_t|^\be(\nu(\I_V)+\nu^0(\I_V))\,\d t+\d \hat{M}_t\\
&\le \frac{\be K_1}{2}U_{\vv,\beta-1}(R^{\mu,\tilde{\mu}}_t)(|R^{\mu,\tilde{\mu}}_t|+\W_\be(\mu_t,\tilde{\mu}_t))\,\d t\\
&\quad +c_1\,(|R^{\mu,\tilde{\mu}}_t|^\be+\W^\be_\be(\mu_t,\tilde{\mu}_t))\,\d t+|R^{\mu,\tilde{\mu}}_t|^\be(\nu(\I_V)+\nu^0(\I_V))\,\d t+\d \hat{M}_t,
\end{split}
\end{equation}
where  $c_1:=2^{\frac{\beta}{2}} K_1^\be(\nu((1+|\cdot|)^\beta\I_V(\cdot))+\nu^0((1+|\cdot|)^\beta\I_V(\cdot)))<\8$ thanks to $({\bf B}_0)$. Whereafter, integrating from $0$ to $t$ followed by taking expectations on both sides of \eqref{EY4}, and applying Young's inequality and
the fact that $X^\mu_t=X^{\tilde \mu}_t=X_0$, we obtain there exists a constant $C_T^*>0$ that for any $t\in[0,T]$,
\begin{equation}\label{R}
\begin{split}
\E U_{\vv,\be}(R^{\mu,\tilde{\mu}}_t)
&\le C_T^*\int_0^t\big(\E  U_{\vv,\be}(R^{\mu,\tilde{\mu}}_s)+\E^0\W^\be_\be(\mu_s,\tilde{\mu}_s)\big)\,\d s.
\end{split}
\end{equation}
This, combining with Gronwall's inequality and approaching $\vv\rightarrow0$, leads to
$$\E|R^{\mu,\tilde{\mu}}_t|^{\be}\le C_T^* e^{C_T^* T}\int_0^t\E^0\W_{\be}^{\be}(\mu_s,\tt\mu_s)\,\d s,\quad t\in[0,T].$$
Correspondingly, we derive that
\begin{align*}
\mathcal  W_{\be,\eta}^\be(\Ga(\mu),\Ga(\tt{\mu}))
&\le\sup_{0\le t\le T}\big(\e^{-\eta\be t}\E^0\big(\E^1\big(|R^{\mu,\tilde{\mu}}_t|^{\be}\big|\scr F_t^{N^0}\big)\big)\big)\\
&\le C_T^* e^{C_T^* T}\sup_{0\le t\le T}\big( \int^t_0\e^{-\eta\be(t-s)}\e^{-\eta\be s}\,\E^0\W_\be^\be(\mu_s,\tt{\mu}_s)\,\d s\big)\\
&\le  C_T^* e^{C_T^* T}/(\eta\be)\mathcal W_{\be,\eta}^\be(\mu,\tt{\mu}).
\end{align*}
As a consequence, we conclude that
 $\Ga$ is contractive under $\mathcal W_{\be,\eta}$  for $\eta>0$ large enough so the strong well-posedness of \eqref{common} is available
 via the Banach fixed point theorem.

 The assertion \eqref{B1} follows by following the procedure to derive \eqref{EP1} and applying Gronwall's inequality. Next, by virtue of the stochastic Gronwall inequality (see e.g. \cite[Lemma 3.7]{XZ}),  we obtain from \eqref{EP1} that for any $0<q_1<q_2<1$ and $\mu\in\scr{D}^{X_0}_{T}$,
\begin{align*}
\bigg(\E\Big(\sup_{0\le t\le T}(1+|X_t^\mu|^2)^{\frac{q_1\beta}{2}}\Big)\bigg)^{\frac{1}{q_1}}\le \Big( \frac{q_2}{q_2-q_1}\Big)^{\frac{1}{q_1}}\e^T\bigg(\E^1(1+|X_0^\mu|^2)^{\frac{\beta}{2}}+C_2\int_0^T\E^0\mu_t(|\cdot|^\beta)\,\d t\bigg).
\end{align*}
In particular, we take  $\mu\in\scr{D}^{X_0}_{T}$ as the fixed point of $\Gamma(\mu)$, defined in \eqref{Ga}, such that $X_t^\mu=X_t$ for any $t\in[0,T]$ and
\begin{align*}
\bigg(\E\Big(\sup_{0\le t\le T}(1+|X_t|^2)^{\frac{q_1\beta}{2}}\Big)\bigg)^{\frac{1}{q_1}}\le \Big( \frac{q_2}{q_2-q_1}\Big)^{\frac{1}{q_1}}\e^T\bigg(\E^1(1+|X_0|^2)^{\frac{\beta}{2}}+C_2\int_0^T\E|X_t|^\beta\,\d t\bigg).
\end{align*}
As a result, \eqref{EQ3} holds true from \eqref{B1}.
\end{proof}

At the end of this subsection, we make a remark concerning Assumptions (${\bf B}_1$) and (${\bf B}_2$).

\begin{remark}\rm
As far as the decoupled SDE associated with the  McKean-Vlasov SDE \eqref{MV} is concerned, the frozen
measure variable  is deterministic  so the interlacing technique is applicable and moreover  the corresponding technical condition is weaker;
see $({\bf A}_1)$ and $({\bf A}_2)$ for more details. Whereas, with regard to the SDE with random coefficients corresponding to the conditional McKean-Vlasov SDE \eqref{common}, the underlying measure-valued process is no longer deterministic but random. Thus, the interlacing trick adopt in the proof of Theorem \ref{theorem2} is unusable.  Furthermore,  once we replace the term $|X_t^\mu|\mu_t(|\cdot|^\be)^{\frac{1}{\beta} }$  in \eqref{EP1} by $\mu_t(|\cdot|^\be)^{\frac{2}{\beta} }$, we need to estimate correspondingly the quantity  $\E((1+|X_t^\mu|^2)^{\frac{\beta}{2}-1}\mu_t(|\cdot|^\be)^{\frac{2}{\beta} })$. In case that $\mu_t$ is deterministic, it is easy to bound the term mentioned. Nevertheless,
$\E^0(\mu_t(|\cdot|^\be)^{\frac{2}{\beta} })$ might explode for $\mu\in \scr{D}^{X_0}_{T} $ $($in this case,    $(\mu_t)_{t\ge0}$ is a measure-valued stochastic process$)$. On the basis of the aforementioned analysis, we impose Assumptions $({\bf B}_1)$ and $({\bf B}_2)$, which is a little bit stronger than Assumptions $({\bf A}_1)$ and $({\bf A}_2)$,
to offset the singularity arising from the spatial variables.
\end{remark}

\subsection{Conditional propagation of chaos for McKean-Vlasov SDEs with common noise} In this subsection, we are still concerned with the
 L\'evy-driven McKean-Vlasov SDE  with common noise \eqref{common}, which  describes the asymptotic behavior of the mean-field interacting particle system below:
\begin{equation}\left\{\begin{array}{l} \label{commonN}
\d \bar{X}^{i,n}_t=b(\bar{X}^{i,n}_t,\bar{\mu}^n_t)\,\d t+\displaystyle\int_U f(\bar{X}^{i,n}_{t-},z)
\,\wt N^i(\d t,\d z)
+\int_V g(\bar{X}^{i,n}_{t-},\bar{\mu}^n_{t-},z)
\,N^i(\d t,\d z)\\
\qquad\qquad  +\displaystyle\int_U f^0(\bar{X}^{i,n}_{t-},z)
\,\wt N^{0,i}(\d t,\d z)
+\int_V g^0(\bar{X}^{i,n}_{t-},\bar{\mu}^n_{t-},z)
\,N^{0,i}(\d t,\d z),\\
~\bar{X}^{i,n}_0=X_0^i, \qquad i=1,2,\cdots,n,\end{array}\right.
\end{equation}
where $\bar{\mu}^n_t:=\frac{1}{n}\sum_{i=1}^n\delta_{\bar{X}^{i,n}_{t}}$,    $\bar{\mu}^n_{t-}:=\frac{1}{n}\sum_{i=1}^n\delta_{\bar{X}^{i,n}_{t-}},
$
 and $\{N^i(\d t, \d z)\}_{1\le i\le n}$ (resp. $\{N^{0,i}(\d t, \d z)\}_{1\le i\le n}$) are independent Poisson measures with intensity measure $\d t\times \nu(\d z)$ (resp. $\d t\times \nu^0(\d z)$).

Throughout this subsection, we will assume that $\beta\in (1,2]$ and work under Assumptions $({\bf B}_0)$-$({\bf B}_{3})$ with $\be$ involved in Assumption $({\bf B}_1)$ replaced by $p\in[1,\be)$. It is easy to see that \eqref{commonN} has a unique strong solution $(\bar{X}^{i,n}_t)_{t\ge0}$. Denote by $\{(X_t^i)_{t\ge0}\}_{1\le i\le n}$ $n$-independent versions of the unique solution to \eqref{common}. In particular, $(\mu_t)_{t\ge0}$ is their common distribution,

It is worth noting that in the presence of common noise, all particles in the stochastic system \eqref{commonN} are not asymptotically independent any more
and the classical  propagation of chaos no longer holds. Whereas, \cite[Theorem 2.12]{CD} puts forward the conditional propagation of chaos, which reveals that, conditioned on the $\si$-algebra associated with common noise,
all particles are asymptotically independent and the empirical measure converges to the common conditional distribution of each particle.
The specific result upon conditional propagation of chaos in our setting is as follows.

\begin{theorem}\label{NPOC}
Assume that $\beta\in (1,2]$, that Assumptions $({\bf B}_0)$-$({\bf B}_{3})$ hold  with $\be$ involved in Assumption $({\bf B}_1)$ replaced by some $p\in[1,\be)$, and suppose further $X^i_0\in L^\beta(\Omega^1\to\R^d,\mathscr F^1_0,\P^1)$ for any $1\le i\le n$.
Then, for any fixed $T>0$, there exists a constant $\bar C_T>0$ such that
\begin{equation*}\label{wNPOC}
 \E\W_p^p(\bar{\mu}^n_t,\mu_t)\le \bar C_T\phi_{p,\beta,d}(n),\quad t\in[0,T],
\end{equation*}
where $\phi_{p,\beta}(n,d)$ was defined as in \eqref{EE*}. Furthermore,
 for fixed $T>0$
and
any  $0\le q_1<q_2<1,$
there exists a constant $\hat C_T>0$ such that
\begin{equation}
\begin{split}
\E\Big(\sup_{t\in[0,T]}|\bar{X}^{i,n}_t-X_t^i|^p\Big)\le \frac{q_2}{q_2-q_1}\big(\hat C_T
\phi_{p,\beta,d}(n)\big)^{q_1}.
\end{split}
\end{equation}

\end{theorem}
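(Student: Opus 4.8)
The plan is to follow the template of the proof of Theorem~\ref{POC}, the only essential new features being that the limiting measure $\mu_t=\mathscr L_{X_t^i\mid\mathscr F_t^{N^0}}$ is now a \emph{random} probability measure, measurable with respect to the common noise, and that the common-noise integrals driven by $f^0$ and $g^0$ must be carried along. First I would fix $1\le i\le n$, set $Q_t^i:=X_t^i-\bar X_t^{i,n}$ and $\tilde\mu_t^n:=\frac1n\sum_{j=1}^n\delta_{X_t^j}$, and, exactly as in \eqref{RW}, combine the triangle inequality for $\W_p$, the bound $(a+b)^p\le 2^{p-1}(a^p+b^p)$, and the elementary coupling estimate $\W_p^p(\bar\mu_t^n,\tilde\mu_t^n)\le \frac1n\sum_{j=1}^n|Q_t^j|^p$ to get, for $t\in[0,T]$,
\[
\W_p^p(\bar\mu_t^n,\mu_t)\le 2^{p-1}\Big(\frac1n\sum_{j=1}^n|Q_t^j|^p+\W_p^p(\mu_t,\tilde\mu_t^n)\Big).
\]

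Next I would apply It\^o's formula to $U_{\vv,p}(Q_t^i)=(\vv+|Q_t^i|^2)^{p/2}$, using \eqref{EE-}--\eqref{EE--}. Since $X^i$ and $\bar X^{i,n}$ are driven by the same Poisson measures $N^i$ and $N^0$, the drift and small-jump contributions are controlled by the weak monotonicity \eqref{EQ1} with $\be$ replaced by $p$ (which already bundles $b$, $f$ and $f^0$), while the compensated and pure large-jump terms coming from $g$ and $g^0$ are dominated by means of \eqref{EQ2} together with the integrability in $({\bf B}_0)$; the compensator of the large-jump part produces $|Q_s^i|^p$- and $\W_p^p(\mu_s,\bar\mu_s^n)$-type terms, and the latter is split once more via the inequality above. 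Proceeding exactly as in the derivation of \eqref{WT} and \eqref{R}, and using $X_0^i=\bar X_0^{i,n}$, one obtains a constant $c>0$ and a local martingale $(\hat M_t^i)_{t\ge0}$ with
\[
U_{\vv,p}(Q_t^i)\le \vv^{p/2}+c\int_0^t\Big(U_{\vv,p}(Q_s^i)+\frac1n\sum_{j=1}^n|Q_s^j|^p+\W_p^p(\mu_s,\tilde\mu_s^n)\Big)\,\d s+\hat M_t^i,\qquad t\in[0,T].
\]
Taking expectations, letting $\vv\to0$ and invoking Gronwall's inequality yields $\max_{1\le i\le n}\E|Q_t^i|^p\le C_1(T)\int_0^t\E\W_p^p(\mu_s,\tilde\mu_s^n)\,\d s$ on $[0,T]$.

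For the empirical-measure error I would use the conditional structure: given $\mathscr F_t^{N^0}$, the variables $X_t^1,\dots,X_t^n$ are i.i.d.\ with the random law $\mu_t$, so \cite[Theorem~1]{FG}, applied conditionally, gives $\E\big(\W_p^p(\tilde\mu_t^n,\mu_t)\mid\mathscr F_t^{N^0}\big)\le c\,\mu_t(|\cdot|^\be)^{p/\be}\,\phi_{p,\beta,d}(n)$; taking expectations and using Jensen's inequality (valid since $p<\be$) together with the moment bound \eqref{B1} gives $\E\W_p^p(\mu_t,\tilde\mu_t^n)\le C_2(T)\phi_{p,\beta,d}(n)$. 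Plugging this into the Gronwall bound above, and then into the first displayed inequality, yields the first assertion $\E\W_p^p(\bar\mu_t^n,\mu_t)\le\bar C_T\phi_{p,\beta,d}(n)$. For the pathwise estimate I would return to the integral inequality for $U_{\vv,p}(Q_t^i)$, apply the stochastic Gronwall inequality \cite[Lemma~3.7]{XZ} exactly as in the passage from \eqref{WT} to \eqref{sPOC}, let $\vv\to0$, insert the two bounds just obtained for $\frac1n\sum_j\E|Q_s^j|^p$ and $\E\W_p^p(\mu_s,\tilde\mu_s^n)$, and raise the outcome to the power $q_1$; this produces the claimed estimate.

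The step I expect to be the main obstacle is not any single hard inequality but the careful bookkeeping around the common noise. One must be sure that the $n$ independent copies $X^i$ of \eqref{common} are coupled to the particles $\bar X^{i,n}$ through the \emph{same} common Poisson measure $N^0$, so that the $f^0$- and $g^0$-integrals cancel to the correct order in $|Q_t^i|$; and one must justify the conditioning on $\mathscr F_t^{N^0}$ when applying \cite[Theorem~1]{FG}, i.e.\ the conditional i.i.d.\ structure of $(X_t^j)_{1\le j\le n}$ given the common noise, which is precisely where the \emph{conditional} nature of the propagation of chaos enters.
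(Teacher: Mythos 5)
Your proposal is correct and follows exactly the route the paper intends: the paper omits the proof of Theorem \ref{NPOC}, stating only that it is largely analogous to that of Theorem \ref{POC}, and your adaptation (the decomposition as in \eqref{RW}, the It\^o/Gronwall estimate for $U_{\vv,p}(Q_t^i)$ using \eqref{EQ1}--\eqref{EQ2} with $({\bf B}_0)$, the conditional application of \cite[Theorem 1]{FG} given $\mathscr F_t^{N^0}$ with Jensen and \eqref{B1}, and the stochastic Gronwall inequality for the pathwise bound) is precisely that analogy carried out, with the right attention to coupling each particle to its limit copy through the same common Poisson noise.
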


\begin{proof}
The structure of proof is largely analogous to Theorem \ref{POC}, so we omit it here.
\end{proof}

\vspace{0.3cm}
\noindent \textbf{Acknowledgements.} The research of Jianhai Bao is supported by the National Key R\&D Program of China (2022YFA1006004) and  the National Natural Science Foundation of China (Nos.\ 12071340).
The research of Jian Wang is supported by the National Key R\&D Program of China (2022YFA1006003) and  the National Natural Science Foundation of China (Nos.\ 12071076 and 12225104).

\end{document}